\newtheorem{theorem}{Theorem}
\newtheorem{proposition}[theorem]{Proposition}
\newtheorem{definition}[theorem]{Definition}
\newtheorem{corollary}[theorem]{Corollary}
\newtheorem{lemma}[theorem]{Lemma}
\newtheorem{remark}{Remark}
\newtheorem{example}[remark]{Example}
\numberwithin{equation}{section}
\def\qed{\hbox{${\vcenter{\vbox{		 
   \hrule height 0.4pt\hbox{\vrule width 0.4pt height 6pt
   \kern5pt\vrule width 0.4pt}\hrule height 0.4pt}}}$}}
\newcommand{\commentout}[1]{}
\newcommand{\comment}[1]{$\null$}
\begin{document}

\title[\emph{Stochastic vorticity equation in $\mathbb R^2$ with not regular noise}]{\emph{Stochastic vorticity equation in $\mathbb R^2$ with not regular noise}}

\author{Benedetta Ferrario, Margherita Zanella}
\address{Benedetta Ferrario, Margherita Zanella \newline Universit\`a di Pavia, Dipartimento di Matematica ''F. Casorati'', via Ferrata 5, 27100 Pavia, Italy}
\email{benedetta.ferrario@unipv.it, margherita.zanella01@ateneopv.it}

\subjclass[2000]{60H15
, 76D05
, 76M35
.}
\keywords{Stochastic vorticity equation, $\gamma$-radonifying operators, strong solution}

\begin{abstract}
We consider the Navier-Stokes equations in vorticity form in $\mathbb{R}^2$ with a white noise forcing term 
of multiplicative type, whose spatial covariance  is not regular enough to apply the It\^o calculus in $L^q$ spaces, 
$1<q<\infty$. We prove the existence of a unique strong (in the probability sense) solution.
\end{abstract}

\maketitle

\section{Introduction}

The aim of this work is to study the stochastic equation for the vorticity, 
 that is the equation describing  the local rotation of a viscous incompressible fluid with a random forcing term. 
Formally, the equation for the vorticity is obtained by taking the curl of the stochastic Navier-Stokes equations, that is the equations of motion of a viscous incompressible fluid with a random forcing term. These latter equations are given by
\begin{equation}
\label{NS_0}
\begin{cases}
\displaystyle  \partial_t v+[- \nu \Delta v+(v \cdot \nabla )v+ \nabla p]\ dt 
       = G(v)\, \partial_tW
\\ 
\nabla \cdot  v=0,
\end{cases}
\end{equation}
where the unknowns are the vector velocity $v=v(t,x)$ and the scalar pressure $p=p(t,x)$. 
By $\nu>0$ we denote the viscosity coefficient; for simplicity, from now on we assume $\nu=1$. 
In our model the stochastic force depends on the velocity itself. We consider $x \in \mathbb{R}^2$ and $t \in \left[0,T\right]$, for a fixed $T>0$. The above equations are associated with the initial condition 
\begin{equation*}
v(0,x)=v_0(x).
\end{equation*}
\newline
We set $\xi = \nabla^{\perp} \cdot v$, where the curl operator is given by 
$ \nabla^{\perp}=\left(-\frac{\partial\;}{\partial x_2},\frac{\partial\;}{\partial x_1}\right)$. 
The scalar unknown $\xi=\xi(t,x)$ represents the vorticity of the fluid; 
it satisfies the following equations
\begin{equation}
\label{vort}
\begin{cases}
\displaystyle  \partial_t \xi+[- \Delta \xi+v \cdot \nabla \xi]\ dt
       = \nabla^\perp \cdot ( G(v)\, \partial_t W)
       \\
       \nabla \cdot v=0
\\ 
\xi= \nabla^{\perp}\cdot v, 
\end{cases}
\end{equation}
associated with the initial condition
$$
\xi(0,x)=\xi_0(x)
$$
where $\xi_0=\nabla^\perp\cdot v_0$.

In equation \eqref{vort} it appears the velocity $v$. This can be expressed in terms of 
the vorticity $\xi$ by means of the Biot-Savart law $v=k *\xi$. On $\mathbb{R}^2$ 
the Biot-Savart kernel $k$ is given by (see \cite[Chapter 2.1]{Majda2002})
\begin{equation}
\label{BS_intr}
k(x)=- \frac{1}{2\pi}\frac{x^{\perp}}{|x|^2}, 
\end{equation}
with the natural notation $x^{\perp}=(-x_2, x_1)$. In this way, \eqref{vort} can be written as a closed equation for the vorticity as we did for instance in  \cite{FZ}.
On a non compact domain this closed form is difficult to handle; indeed, $k \notin 
L^p(\mathbb R^2)$ for any $1\le p\le \infty$. 
Then, we take into account the equations \eqref{NS_0} for the velocity. 
When $v$ exists and has a suitable regularity, we can handle  the equations \eqref{vort} for the vorticity.

The problem of the existence and uniqueness of ${L}^2$-solutions of the stochastic Navier-Stokes equations \eqref{NS_0} has been addressed by many authors. There is also a consistent literature on more regular solutions, but the majority of the work is limited to bounded domains 
(see e.g. \cite{AlbFla, K} and the therein references). An extension to unbounded domains is not trivial since the direct application of the compactness method, which is central in the proof, fails. Main source of difficulty is the fact that the embedding of the Sobolev space of functions with square integrable gradient into the ${L}^2$-space, unlike in the bounded space, is not compact. 
To address this problem different ideas have been employed. One way is to introduce weighted Sobolev spaces, as done for instance in \cite{VisFur, CapPsz}. This method allows in particular to consider spatially homogeneous noises. 
A different approach is used in \cite{MikRoz2004}, \cite{MikRoz} and \cite{BrzMot}. 
In \cite{MikRoz} the authors prove the existence of an ${L}^2$-valued continuous 
solution considering a more general noise than in \cite{CapPsz}. 
Their proof is based on some compactness and tightness criteria in local spaces and in the space 
${L}^2$ with the weak topology.  Differently, in \cite{MikRoz2004} also the vorticity is considered, 
but the results involve $v$ and $\xi$ in $L^p(\mathbb R^d)$ for $p>d$.
Inspired by \cite{MikRoz}, in \cite{BrzMot} the authors prove existence and uniqueness of a strong 
$L^2$-solution, by means of
a modification of the classical Dubinsky compactness theorem that allows to work in unbounded domains. 

Following the same approach of \cite{BrzMot}, in \cite{BrzFer} authors impose really weak assumptions on the covariance operator of the noise term in the velocity equation \eqref{NS_0}. In particular, it is not regular enough to allow to use It\^o formula in the space of finite energy velocity vectors, which is the basic space in which one looks for existence of solutions.

Inspired by \cite{BrzFer} we consider the vorticity equations \eqref{vort} with a multiplicative 
noise whose covariance is not regular enough to allow to use the It\^o formula in $L^q$ spaces, for $1<q<\infty$; in particular, the covariance of the noise is not a trace class operator in the space of 
finite energy vorticity  and this case has not been considered in  previous papers.
The aim of this work is to prove the existence of a martingale solution for the vorticity equation \eqref{vort} in $\mathbb{R}^2$ when $v_0,\xi_0\in L^2(\mathbb R^2)$, which is not considered by \cite{MikRoz2004}. We ask minimal assumptions on the covariance of the noise. Moreover, we prove pathwise uniqueness; this implies existence of a strong solution too. 
A more regular solution will be found when $v_0,\xi_0\in L^2(\mathbb R^2)\cap L^q(\mathbb R^2)$ for $q>2$.
The results are proved by working directly on the equation for the vorticity \eqref{vort}
and using suitable estimates on $v$ coming from equations \eqref{NS_0}.

As far as the contents of the paper are concerned, in Section \ref{mat_set} we define the abstract setting in order to write \eqref{NS_0} and \eqref{vort} as It\^o equations in some Banach space. 
In Section \ref{v_sec} we are concerned with the study of the regularity of the velocity solution to equations \eqref{NS_0}. 
In Section \ref{vor_sec_bad} we prove the existence and uniqueness of a strong solution to the vorticity equations \eqref{vort}. 
An existence result for equations \eqref{NS_0} driven by a more regular random forcing term is given in Appendix \ref{A}.

 \begin{remark}
 As we are working in the intersection of analysis and probability, the terminology concerning the notion of solution can cause some confusion. 
When we talk about strong and weak solutions we understand them in a probabilistic sense. In the case of strong solutions, the underlying probability space is given in advance. On the other side, in the case of a martingale solution the stochastic basis is constructed as part of the solution. In both cases solutions are weak in the sense of PDEs since we test them against smooth functions.
\end{remark}
\smallskip

{\small Notation. In the sequel, we shall indicate with $C$ a constant that may varies from line to line. In certain cases, we write $C_{\alpha,\beta,\dots}$ to 
emphasize the dependence of the constant on the parameters $\alpha, \beta, \dots$.}

\section{Mathematical framework}
\label{mat_set}
\subsection{Functional spaces}
We first introduce the functional spaces.

Let $q \in \left[1, \infty\right)$ and $d=1,2$. 
Let $L^q=\left[L^q(\mathbb{R}^2) \right]^d$ with
norm 
\begin{equation*}
\|v\|_{L^q}=\left(\sum_{k=1}^d \int_{\mathbb{R}^2}\left|v_k(x)\right|^q\, {\rm d}x \right)^{\frac 1q}
\end{equation*}
where $v=(v_1,...v_d)$. Similarly,  $L^{\infty}=\left[L^{\infty}(\mathbb{R}^2) \right]^d$ is the  Banach 
space  with norm
\[
\|v\|_{L^{\infty}}
=\sum_{k=1}^d \text{ess sup} \{|v_k(x)|, x \in \mathbb{R}^2\}.
\]
If $q=2$, then $L^2$ is a Hilbert space with scalar product given by 
\begin{equation*}
\langle u, v \rangle_{L^2}=\sum_{k=1}^d\int_{\mathbb{R}^2}u_k(x) v_k(x)\, {\rm d}x.
\end{equation*}
By $\mathbb{L}^q$, $1 \le q \le \infty$ we denote the spaces
\begin{equation}
\label{MatbbLq}
\mathbb{L}^q=\{u \in L^q : \nabla \cdot u=0\}
\end{equation}
with norm inherit from $L^q$. The divergence has to be understood in the weak sense.
Notice that we use the same notation $L^q$ for scalar fields ($d=1)$ and vector fields ($d=2$). 
The context shall make clear the case we are considering. We will specify the dimension $d$ only in some ambiguous cases.

For $s \in \mathbb{R}$ and $1 \le q \le \infty$, set $J^s=(I-\Delta)^{\frac s2}$. We define the generalized Sobolev spaces as
\begin{equation}
\label{151017}
W^{s,q}=\{u \in\mathcal{S}'(\mathbb{R}^d) : \|J^su\|_{L^q} < \infty\}
\end{equation}
and the generalized Sobolev spaces of divergence free vector distributions as
\begin{equation}
\label{151017bis}
H^{s,q}=\{u \in \left[W^{s,q}\right]^d : \nabla \cdot u=0\}.
\end{equation}
We have (see \cite{BerLof}) that $J^{\sigma}$ is an isomorphism between $W^{s,q}$ and $W^{s-\sigma, q}$. 
For $s_1< s_2$ there is the continuous embedding $W^{s_2,q} \subset 
W^{s_1,q}$ and the dual space of $W^{s,q}$ is $W^{-s,p}$ with $1 < p \le \infty$, $\frac 1p+\frac 1q =1$. We denote by $\langle \cdot, \cdot \rangle$ the $W^{s,q}-W^{-s,p}$ duality bracket:
\begin{equation*}
\langle u,v \rangle =\sum_{k=1}^d \int_{\mathbb{R}^2} (J^su_k)(x)(J^{-s}v_k)(x)\, {\rm d}x.
\end{equation*}
Let us focus on the case $s=1$, $q \in(1, \infty)$. The space $W^{1,q}$ is endowed with the norm
\begin{equation*}
\|u\|^q_{W^{1,q}}= \|u\|^q_{L^q}+ \|\nabla u\|^q_{L^q}.
\end{equation*}
Since we are on the whole space $\mathbb{R}^2$, Poincar\'e inequality does not hold; 
thus there is no  equivalence of the norms $\|u \|_{W^{1,q}}$ and $\|\nabla u\|_{L^q}$. Nevertheless, we have the following result (see \cite[Lemma 3.1]{BrzPsz}).
\begin{lemma}
\label{lemma_curl}
Let $q \in (1, \infty)$. There is a constant $C$ such that $\|\nabla v\|_{L^q} \le C \| \emph{curl} \ v\|_{L^q}$ for every $v \in W^{1,q}$.
\end{lemma}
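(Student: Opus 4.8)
The plan is to write each first-order derivative of $v$ as a singular integral operator of Calder\'on--Zygmund type acting on $\mathrm{curl}\,v$, and then to quote the $L^q$-boundedness of such operators for $1<q<\infty$. The starting point is the planar identity $\Delta v=\nabla(\mathrm{div}\,v)+\nabla^{\perp}(\mathrm{curl}\,v)$, which one checks componentwise from $\mathrm{div}\,v=\partial_1 v_1+\partial_2 v_2$ and $\mathrm{curl}\,v=\partial_1 v_2-\partial_2 v_1$. For the divergence-free fields that occur in our problem the first term vanishes, leaving $\Delta v=\nabla^{\perp}(\mathrm{curl}\,v)$; it is exactly this structural hypothesis $\mathrm{div}\,v=0$ that makes the estimate possible, since a nonzero gradient field has vanishing curl but nonvanishing $\nabla v$. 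Thus, at least formally, $\nabla v=\nabla\Delta^{-1}\nabla^{\perp}(\mathrm{curl}\,v)$, and the whole point is that the operator $\nabla\Delta^{-1}\nabla^{\perp}$ is a matrix of order-zero singular integrals. I would first reduce to $v$ in the Schwartz class by a density argument and recover the general $v\in W^{1,q}$ at the end by approximation.

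Next I would make the operator explicit in Fourier variables $\eta\in\mathbb{R}^2$ (using $\eta$ rather than $\xi$, which here denotes the vorticity). The constraint $\mathrm{div}\,v=0$ reads $\eta_1\hat v_1+\eta_2\hat v_2=0$, while $\widehat{\mathrm{curl}\,v}=i(\eta_1\hat v_2-\eta_2\hat v_1)$; solving this $2\times2$ linear system (its determinant is $|\eta|^2$) yields the Biot--Savart law in frequency, $\hat v_1=\frac{i\eta_2}{|\eta|^2}\,\widehat{\mathrm{curl}\,v}$ and $\hat v_2=-\frac{i\eta_1}{|\eta|^2}\,\widehat{\mathrm{curl}\,v}$. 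Multiplying by $i\eta_j$ gives $\widehat{\partial_j v_k}(\eta)=\pm\frac{\eta_j\eta_\ell}{|\eta|^2}\,\widehat{\mathrm{curl}\,v}(\eta)$, so that every entry of $\nabla v$ is, up to sign, a composition $R_jR_\ell(\mathrm{curl}\,v)$ of two Riesz transforms (recall $R_j$ has symbol $-i\eta_j/|\eta|$).

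It then remains to invoke the classical fact that the Riesz transforms, and hence their compositions, are bounded operators on $L^q(\mathbb{R}^2)$ for every $1<q<\infty$: the multipliers $\eta_j\eta_\ell/|\eta|^2$ are homogeneous of degree zero and smooth away from the origin, so they satisfy the Mikhlin--H\"ormander conditions. This gives $\|\partial_j v_k\|_{L^q}\le C\|\mathrm{curl}\,v\|_{L^q}$ for each pair $(j,k)$, and summing over the finitely many components produces the desired $\|\nabla v\|_{L^q}\le C\|\mathrm{curl}\,v\|_{L^q}$. The main obstacle is not the algebra but the harmonic analysis: one must justify the multiplier/singular-integral step rigorously on the whole plane $\mathbb{R}^2$ (where, unlike $v$ itself, the degree-zero symbols for $\nabla v$ remain bounded across $\eta=0$), together with the density argument that transfers the bound from Schwartz functions to all of $W^{1,q}$.
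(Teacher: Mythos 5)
Your proof is correct and is essentially the argument behind this lemma: the paper itself gives no proof but refers to \cite[Lemma 3.1]{BrzPsz}, where the estimate is obtained in exactly this way --- using $\nabla\cdot v=0$ to express each entry of $\nabla v$ as a second-order Riesz transform (symbol $\pm\eta_j\eta_\ell/|\eta|^2$) applied to $\mathrm{curl}\,v$, and then invoking Calder\'on--Zygmund/Mikhlin $L^q$-boundedness for $1<q<\infty$. You are also right that the divergence-free hypothesis is indispensable (the inequality fails for gradient fields), so the hypothesis ``$v\in W^{1,q}$'' in the statement should be read as $v\in H^{1,q}$, which is what the cited lemma asserts and how the result is used throughout the paper.
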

In particular, since $\|\text{curl} v \|_{L^q} \le  \|\nabla v\|_{L^q}$ we get the equivalence of the norms 
\begin{equation}
\label{equivnorm}
\|\nabla u\|_{L^q}\sim\|\text{curl}\ u\|_{L^q}.
\end{equation}
In the sequel, when we ask $v_0\in \mathbb L^q$, $\xi_0\in L^q$ this is equivalent to $v_0 \in H^{1,q}$.

We recall some Sobolev embedding theorems (see \cite[Theorem 9.12 and Corollary 9.11]{brezis2010functional}). For every $q\in (2, \infty)$ the space $W^{1,q}$ is continuously embedded into $L^{\infty}$, namely there exists a constant $C$ (depending on $q$) such that:
\begin{align}
\label{Sobolev}
\| v\|_{L^{\infty}} 
\le  C\| v\|_{W^{1,q}}.
\end{align}
For every $q\in\left[2, \infty\right)$, $W^{1,2}$ is continuously embedded in $L^q$, 
namely there exists a constant $C$ (depending on $q$) such that:
\begin{align*}\label{Sobolev3}\| v\|_{L^q} \le  C\| v\|_{W^{1,2}}.\end{align*}

Since $\mathbb{R}^2$ is an unbounded domain, the embedding of $H^{1,2}$ into $\mathbb{L}^2$ is not compact. However, by \cite[Lemma 2.5]{HolWic} (see also \cite[Lemma C.1]{BrzMot}), there exists a separable Hilbert space $\mathbb{U}$ such that
\begin{equation*}
\mathbb{U} \subset H^{1,2} \subset \mathbb{L}^2,
\end{equation*}
the embedding $i$ of $\mathbb{U}$ into $H^{1,2}$ being dense and compact. Then we have 
\begin{equation}
\label{mathbbU}
\mathbb{U} \underset{i}{\subset} H^{1,2} \subset \mathbb{L}^2 \simeq (\mathbb{L}^2)^* \subset H^{-1,2} \underset{i^*}{\subset} \mathbb{U}^*
\end{equation}
where $(\mathbb{L}^2)^*$ and $H^{-1,2}$ are the dual spaces of $\mathbb{L}^2$ and $H^{1,2}$ respectively, $(\mathbb{L}^2)^*$ being identified with $\mathbb{L}^2$ and $i^*$ is the dual operator to the embedding $i$. Moreover, $i^*$ is compact as well.
\newline
The same considerations hold also when we consider the spaces $W^{1,2}$ and $L^2$. In this case we shall denote by $U$ the Hilbert space such that $U \subset W^{1,2} \subset L^2$.

By $C^{\infty}_{sol}:=\left[C^{\infty}_{sol}(\mathbb{R}^2)\right]^2$ we denote the space consisting of all divergence free vectors $v \in \left[C^{\infty}(\mathbb{R}^2)\right]^2$ with compact support.
We denote by $L^2(0,T;\mathbb{L}^2_{loc})$ the space of measurable functions $v : \left[0,T\right]\rightarrow \mathbb{L}^2$ such that, for any $R>0$, the norm 
$\|u\|_ {L^2(0,T;\mathbb{L}^2_R)}=\left(\int_0^T\int_{|x|<R}|u(t,x)|^2\, {\rm d}x\, {\rm d}t\right)^{\frac12}$ is finite. 
It is a Fr\'echet space with the topology generated by the seminorms $\|u\|_ {L^2(0,T;\mathbb{L}^2_R)}$, $R \in \mathbb{N}$.
\newline
We denote by $C(\left[0,T\right];L^2_w)$ the space of $L^2$-valued weakly continuous functions with the topology of uniform weak convergence on $\left[0,T\right]$; in particular $v_n \rightarrow v$ in $C(\left[0,T\right];L^2_w)$ means
\begin{equation}
\lim_{n \rightarrow \infty} \sup_{0 \le t \le T} |\langle v_n(t)-v(t),h\rangle_{L^2}|=0
\end{equation}
for all $h \in L^2$.
For $q \ge 2$, we denote by $L^{\infty}_w(0,T;L^q)$ the space $L^{\infty}(0,T;L^q)$ with the weak-$*$ topology.
\newline
For $0<\beta<1$ by $C^{\beta}(\left[0,T\right];H^{s,2})$ we denote the Banach space of  $H^{s,2}$-valued $\beta$-H\"older continuous functions endowed with the following norm 
\begin{equation*}
\|u\|_{C^{\beta}(\left[0,T\right];H^{s,2})} 
= \sup_{0 \le t \le T} \|u(t)\|_{H^{s,2}}+ \sup_{0 \le s < t \le T}\frac{\|u(t)-u(s)\|_{H^{s,2}}}{|t-s|^{\beta}}.
\end{equation*}

\subsection{Operators}

We define the operators that will appear in the abstract formulation of \eqref{NS_0} and \eqref{vort}.
We refer to \cite{Temam2001} and \cite{KatoPonce} for the details.

Let $A=-\Delta$. It is a linear unbounded operator in $W^{s,p}$ and $H^{s,p}$ ($s \in \mathbb{R}, 1 \le p < \infty$); it generates a contractive and analytic $C_0$-semigroup $\{S(t)\}_{t \ge 0}$. We have $A:H^{1,2} \rightarrow H^{-1,2}$ and 
\begin{equation*}
\langle Au,u\rangle= \|\nabla u\|^2_{L^2}, \qquad u \in H^{1,2}.
\end{equation*}

We define the bilinear vector operator $B:H^{1,2} \times H^{1,2} \rightarrow H^{-1,2}$
 as 
\begin{equation*}
\langle B(u,v), z\rangle = \int_{\mathbb{R}^2}(u(x) \cdot \nabla ) v(x)\cdot z(x)\, {\rm d}x.
\end{equation*}
The following lemma gathers the main properties of $B$ we shall need in the following.
\begin{lemma}
\label{lemmaB}
\begin{enumerate}[label=\roman{*}), ref=(\roman{*})]
\item The vector operator $B$ is bounded from $H^{1,2} \times H^{1,2}$ into $H^{-1,2}$.
\item It holds
\begin{equation}
\label{B0}
\langle B(u,v),z\rangle=-\langle B(u,z),v\rangle, \qquad \forall \ u,v,z \in H^{1,2}
\end{equation}
\begin{equation}
\label{B00}
\langle B(u,v),v\rangle=0, \qquad \forall \ u,v \in H^{1,2}
\end{equation}
\item For every $q>2$ it holds
\begin{equation}
\label{Bq}
\langle B(u,u),|u|^{q-2}u\rangle=0, \qquad  \forall \ u \in H^{1,2}.
\end{equation}
\item
$B$ can be extended to be a bounded operator from $\mathbb L^4\times \mathbb L^4$ to $H^{-1,2}$.
\end{enumerate}
\end{lemma}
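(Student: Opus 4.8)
The four assertions are best handled in turn, each reducing to an elementary inequality or an integration by parts once the correct approximation scheme is in place. The common strategy is to verify the identities and bounds first on the dense subspace $C^\infty_{sol}$, where all manipulations are legitimate and boundary terms vanish by compact support, and then to pass to the limit using the continuity estimate established in item (i).

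For (i), I would pair $B(u,v)$ against an arbitrary $z\in H^{1,2}$ and apply H\"older's inequality with exponents $(4,2,4)$,
\[
|\langle B(u,v),z\rangle|\le \|u\|_{L^4}\,\|\nabla v\|_{L^2}\,\|z\|_{L^4}.
\]
The Sobolev embedding $W^{1,2}\hookrightarrow L^4$ recalled above (valid since $4\in[2,\infty)$) bounds $\|u\|_{L^4}$ and $\|z\|_{L^4}$ by $\|u\|_{W^{1,2}}$ and $\|z\|_{W^{1,2}}$, while $\|\nabla v\|_{L^2}\le\|v\|_{W^{1,2}}$. Taking the supremum over $z$ with $\|z\|_{H^{1,2}}\le1$ then gives $\|B(u,v)\|_{H^{-1,2}}\le C\|u\|_{H^{1,2}}\|v\|_{H^{1,2}}$. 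For (ii), for $u,v,z\in C^\infty_{sol}$ an integration by parts in each $x_i$ yields
\[
\int_{\mathbb{R}^2}(u\cdot\nabla)v\cdot z\,{\rm d}x=-\int_{\mathbb{R}^2}(\nabla\cdot u)\,(v\cdot z)\,{\rm d}x-\int_{\mathbb{R}^2}(u\cdot\nabla)z\cdot v\,{\rm d}x,
\]
and the first integral vanishes because $\nabla\cdot u=0$, which is exactly \eqref{B0}; density of $C^\infty_{sol}$ in $H^{1,2}$ together with the trilinear bound from (i) extends it to all of $H^{1,2}$. Setting $z=v$ in \eqref{B0} gives $\langle B(u,v),v\rangle=-\langle B(u,v),v\rangle$, hence \eqref{B00}.

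For (iii), the engine is the pointwise chain-rule identity
\[
(u\cdot\nabla)u\cdot|u|^{q-2}u=\tfrac12|u|^{q-2}\,(u\cdot\nabla)|u|^2=\tfrac1q\,u\cdot\nabla|u|^q,
\]
so that for $u\in C^\infty_{sol}$ one integration by parts gives
\[
\langle B(u,u),|u|^{q-2}u\rangle=\frac1q\int_{\mathbb{R}^2}u\cdot\nabla|u|^q\,{\rm d}x=-\frac1q\int_{\mathbb{R}^2}(\nabla\cdot u)\,|u|^q\,{\rm d}x=0.
\]
Finally, for (iv) I would invoke the antisymmetry \eqref{B0} to transfer the derivative off the merely $L^4$ arguments onto the smooth test field: writing $\langle B(u,v),z\rangle=-\langle B(u,z),v\rangle=-\int_{\mathbb{R}^2}(u\cdot\nabla)z\cdot v\,{\rm d}x$ and estimating by H\"older,
\[
|\langle B(u,v),z\rangle|\le\|u\|_{L^4}\,\|\nabla z\|_{L^2}\,\|v\|_{L^4}\le\|u\|_{L^4}\,\|v\|_{L^4}\,\|z\|_{H^{1,2}},
\]
whence $B$ extends by density to a bounded operator from $\mathbb{L}^4\times\mathbb{L}^4$ into $H^{-1,2}$.

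I expect item (iii) to be the main obstacle. Unlike (i), (ii) and (iv), where the extension from $C^\infty_{sol}$ follows immediately from the continuity estimate, here the test field $|u|^{q-2}u$ depends \emph{nonlinearly} on $u$, and for a general $u\in H^{1,2}$ in two dimensions one does not have $|u|^{q-2}u\in H^{1,2}$, so the pairing itself has to be interpreted with care. To make the identity rigorous I would approximate $u$ by $u_n\in C^\infty_{sol}$ in $H^{1,2}$ and establish the convergence $|u_n|^{q-2}u_n\to|u|^{q-2}u$ in the relevant topology, exploiting the embeddings $W^{1,2}\hookrightarrow L^p$ for every finite $p$ to control the polynomial growth and secure the needed uniform integrability before passing to the limit in the integral identity.
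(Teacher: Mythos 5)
Your proof is correct and follows essentially the same route as the paper: H\"older plus the Sobolev embedding $W^{1,2}\hookrightarrow L^4$ for (i), integration by parts with the divergence-free condition for (ii) and (iii), and the antisymmetry \eqref{B0} combined with the first estimate in \eqref{Bnew} and density of $H^{1,2}$ in $\mathbb L^4$ for (iv). Your extra care in (iii) about the nonlinear dependence of the test field $|u|^{q-2}u$ on $u$ (handled via approximation and the embedding $H^{1,2}\subset \mathbb L^r$ for all finite $r$) is exactly the point the paper addresses, albeit more tersely, when it notes that the duality is well defined.
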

\begin{proof}
i) follows by H\"older and Sobolev inequalities: we get 
\begin{equation}
\label{Bnew}
|\langle B(u,v),z\rangle| 
\le \|u\|_{\mathbb{L}^4}\|\nabla v\|_{\mathbb{L}^2}\|z\|_{\mathbb{L}^4}
\le C \|u\|_{H^{1,2}}\|v\|_{H^{1,2}}\|z\|_{H^{1,2}}.
\end{equation}
\eqref{B0} is obtained by the integration by parts formula; when $v=z$ we get \eqref{B00}.\\
Also iii) is obtained by the integration by parts formula; notice that the duality is well defined since 
\[
|\langle B(u,u),|u|^{q-2}u\rangle|\le
\|u\|_{L^4}\|\nabla u\|_{L^2}\||u|^{q-2}u\|_{L^4}
\]
and $H^{1,2}\subset \mathbb L^r$ for any finite $r$.\\
iv) comes from \eqref{B0} and the first estimates in \eqref{Bnew} by using  the fact that $H^{1,2}$ is dense in $L^4$.
\end{proof}

We define the bilinear scalar operator $F:H^{1,2}\times W^{1,2} \rightarrow W^{-1,2}$ as 
\begin{equation}
\langle F(u,\xi),\zeta\rangle=\int_{\mathbb{R}^2}u(x) \cdot \nabla \xi(x) \zeta(x)\, {\rm d}x.
\end{equation}
The following lemma gathers the main properties of $F$ we shall need in the following.
\begin{lemma}
\begin{enumerate}[label=\roman{*}), ref=(\roman{*})]
\item The operator $F$ is bounded from $H^{1,2}\times W^{1,2}$ into $W^{-1,2}$.
\item It holds
\begin{equation}
\label{F2}
\langle F(u,\xi),\zeta\rangle=-\langle F(u,\zeta),\xi\rangle, \qquad \langle F(u,\xi),\xi\rangle=0
\qquad \forall u \in H^{1,2}, \zeta,\xi \in W^{1,2}.
\end{equation}
\item 
For every $q>2$ we get
\begin{equation}
\label{Fq} 
\langle F(u, \xi), \zeta |\zeta|^{q-2} \rangle=-(q-1)\langle F(u,\zeta),|\zeta|^{q-2} \xi \rangle, \qquad  \forall \xi \in W^{1,2}, \
\zeta  \in L^{2(q-1)},  
u\in \mathbb{L}^{\infty} 
\end{equation}

and 
\begin{equation}
\label{Fq0}
\langle F(u, \xi), \xi |\xi|^{q-2} \rangle=0, \qquad  \forall \xi \in W^{1,2}, \ u \in \mathbb{L}^{\infty}
\end{equation}
\item $F$ can be extended to a bounded bilinear 
operator from $\mathbb{L}^4 \times L^4$ to $W^{-1,2}$ and
\begin{equation}
\label{F3}
\|F(u,\xi)\|_{W^{-1,2}} \le \|u\|_{\mathbb{L}^4}\|\xi\|_{L^4}.
\end{equation} 
\end{enumerate}
 \end{lemma}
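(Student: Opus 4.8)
The four assertions are the scalar analogues of the preceding lemma on $B$, so my plan is to follow the same route, using a scalar test function and exploiting the divergence-free condition on the first argument at every step. For (i), I would bound the trilinear form directly by H\"older with exponents $(4,2,4)$,
\[
|\langle F(u,\xi),\zeta\rangle| \le \|u\|_{L^4}\,\|\nabla\xi\|_{L^2}\,\|\zeta\|_{L^4},
\]
and then apply the Sobolev embedding $W^{1,2}\hookrightarrow L^4$ (the stated embedding with $q=4$) to the first and third factors, obtaining $|\langle F(u,\xi),\zeta\rangle|\le C\|u\|_{H^{1,2}}\|\xi\|_{W^{1,2}}\|\zeta\|_{W^{1,2}}$. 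As this holds for every $\zeta\in W^{1,2}$, it gives both $F(u,\xi)\in W^{-1,2}$ and the boundedness of $F$.

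For (ii) I would integrate by parts and use $\nabla\cdot u=0$: since $\int u\cdot\nabla\xi\,\zeta\,dx = -\int u\cdot\nabla\zeta\,\xi\,dx - \int(\nabla\cdot u)\,\xi\zeta\,dx$ and the last term vanishes, the antisymmetry in \eqref{F2} follows, and setting $\zeta=\xi$ gives $\langle F(u,\xi),\xi\rangle=0$. For (iii) the same boundary-free integration by parts applied to the test function $\zeta|\zeta|^{q-2}$ is the core computation; the chain rule $\nabla(\zeta|\zeta|^{q-2}) = (q-1)|\zeta|^{q-2}\nabla\zeta$ inserted into it yields \eqref{Fq}, and then taking $\xi=\zeta$ reduces the identity to $\langle F(u,\xi),\xi|\xi|^{q-2}\rangle = -(q-1)\langle F(u,\xi),\xi|\xi|^{q-2}\rangle$, forcing \eqref{Fq0}. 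Beforehand I would record that under $u\in\mathbb{L}^\infty$, $\xi\in W^{1,2}$, $\zeta\in L^{2(q-1)}$ the left-hand integrand is dominated by $\|u\|_{L^\infty}\|\nabla\xi\|_{L^2}\||\zeta|^{q-1}\|_{L^2}$ with $\||\zeta|^{q-1}\|_{L^2}=\|\zeta\|_{L^{2(q-1)}}^{q-1}$, so that the duality is well defined.

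For (iv) I would move the derivative off $\xi$ using the antisymmetric form just obtained, writing $\langle F(u,\xi),\zeta\rangle=-\int u\cdot\nabla\zeta\,\xi\,dx$; H\"older with exponents $(4,2,4)$ then gives
\[
|\langle F(u,\xi),\zeta\rangle| \le \|u\|_{\mathbb{L}^4}\,\|\nabla\zeta\|_{L^2}\,\|\xi\|_{L^4} \le \|u\|_{\mathbb{L}^4}\,\|\xi\|_{L^4}\,\|\zeta\|_{W^{1,2}},
\]
which is exactly \eqref{F3}. Since $H^{1,2}\times W^{1,2}$ is dense in $\mathbb{L}^4\times L^4$, this bound extends $F$ to a bounded bilinear operator on the larger space.

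The step I expect to be most delicate is the integration by parts in (iii): the function $\zeta|\zeta|^{q-2}$ is not a priori an admissible test element and the chain rule for $s\mapsto s|s|^{q-2}$ (which is $C^1$ precisely because $q\ge 2$) must be justified, most cleanly by approximating $\zeta$ by smooth compactly supported functions and passing to the limit with the integrability bounds recorded above; the identity \eqref{Fq} itself is to be read in this limiting sense, since its right-hand side involves $\nabla\zeta$ while $\zeta$ is only assumed in $L^{2(q-1)}$.
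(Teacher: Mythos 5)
Your proposal is correct and follows essentially the same route as the paper: H\"older with exponents $(4,2,4)$ plus the Sobolev embedding $W^{1,2}\hookrightarrow L^4$ for (i), integration by parts with the divergence-free condition for (ii), the smooth-approximation-plus-density argument with the chain rule $\nabla(\zeta|\zeta|^{q-2})=(q-1)|\zeta|^{q-2}\nabla\zeta$ for (iii) (deducing \eqref{Fq0} by setting $\zeta=\xi$ in \eqref{Fq}), and the antisymmetric form together with H\"older and density of $H^{1,2}\times W^{1,2}$ in $\mathbb{L}^4\times L^4$ for (iv). Your explicit justification of the well-definedness of the duality in (iii), with the correct power $\|\zeta\|_{L^{2(q-1)}}^{q-1}$, matches (and slightly sharpens) the estimate recorded in the paper.
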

 \begin{proof}
The proof of statements i), ii) and iv) can be done as in Lemma \ref{lemmaB}. Statement iii) is obtained by integrating by parts, where in \eqref{Fq} the proof is done first with smooth functions and then by density is extended on the spaces specified. Notice that the l.h.s. side of \eqref{Fq} is well defined since 
\begin{equation*}
|\langle F(u, \xi), \zeta |\zeta|^{q-2} \rangle|  \le \|u\|_{L^{\infty}}\|\nabla \xi\|_{L^2} \|\zeta\|_{L^{2(q-1)}}
\le \|u\|_{L^{\infty}} \|\xi\|_{W^{1,2}} \|\zeta\|_{L^{2(q-1)}}.
\end{equation*}
Eventually, \eqref{Fq0} is a particular case of \eqref{Fq}.
\end{proof}

\subsection{Random forcing term}
\label{RFT_bad}
We define the noise forcing term driving equation \eqref{NS_0}. Given a real separable Hilbert space $\mathcal{H}$, we consider a $\mathcal{H}$-cylindrical Wiener process $W$ defined on a stochastic basis $(\Omega, \mathcal{F}, \{\mathcal{F}_t\}_{t \in \left[0,T\right]},\mathbb{P})$, where $\{\mathcal{F}_t\}_{t \in \left[0,T\right]}$ is a 
complete right continuous filtration. We can write
\begin{equation}
\label{W}
W(t)=\sum_{k=1}^{\infty}\beta_k(t) h_k, \qquad t \in \left[0,T\right],
\end{equation}
where $\{\beta_k\}_{k \in \mathbb{N}}$ is a sequence of standard independent identically distributed Wiener processes defined on $(\Omega, \mathcal{F}, \{\mathcal{F}_t\}_{t \in \left[0,T\right]},\mathbb{P})$ and $\{h_k\}_{k\in \mathbb{N}}$ is a complete orthonormal system in $\mathcal{H}$.

We recall  basic facts concerning stochastic integration in Banach spaces. 
For more details see e.g. \cite{NeiPhD}, \cite{Det1991} and \cite{Neerven2007a}.

Let $E$ be a real separable Banach space.
We denote by $\gamma$ the standard Gaussian cylindrical distribution on $\mathcal{H}$.
A bounded linear operator $K\in \mathcal{L}(\mathcal H,E)$ is called $\gamma$-radonifying when the image $K(\gamma):=\gamma \circ K^{-1}$ of $\gamma$ under $K$ is $\sigma$-additive on the algebra of cylindrical sets in $E$. We set
\begin{equation*}
R(\mathcal H,E):=\{K \in \mathcal{L}(\mathcal H,E) \ \text{and $K$ is $\gamma$-radonifying}\}.
\end{equation*}
The algebra of cylindrical sets in $E$ generates the Borel $\sigma$-algebra, $\mathcal{B}(E)$ on $E$ (see \cite{Kuo}). Thus $K(\gamma)$ extends to a Borel measure on $\mathcal{B}(E)$ which we denote by $\gamma_K$. In particular, $\gamma_K$ is a Gaussian measure on $\mathcal{B}(E)$.
For $K \in R(\mathcal H,E)$ we put 
\begin{equation}
\label{normR}
\|K\|^2_{R(\mathcal H,E)}:=\int_E\|x\|^2_E\,{\rm d}\gamma_K(x). 
\end{equation}
As $\gamma_K$ is Gaussian, then by the Fernique-Landau-Shepp Theorem (see \cite{Kuo}), $\|K\|_{R(\mathcal H,E)}$ is finite. Moreover, see (see \cite{NeiPhD}), $R(\mathcal H,E)$ is a separable Banach space endowed with the norm \eqref{normR}.

If $E$ is a Hilbert space, then $K:\mathcal H\rightarrow E$ is $\gamma$-radonifying means that
 $K$ is Hilbert-Schmidt and the adjoint operator $T^*:E \rightarrow \mathcal H$ is Hilbert-Schmidt too.
We denote by $L_{HS}(\mathcal H;E)$ the space of all Hilbert-Schmidt operators from $\mathcal{H}$ into the (Hilbert) space $E$. In this case it holds $\|K\|_{L_{HS}(\mathcal H;E)}=\|K\|_{R(\mathcal H;E)}=\|K^*\|_{L_{HS}(E;\mathcal H)}$. 

We have the following characterization of $\gamma$-radonifying operators when $E=L^q$, see \cite[Proposition 13.7]{Van} and \cite[Theorem 2.3]{BrzNer}. 
\begin{proposition}
\label{pro4}
Let $1\le q<\infty$ and $\{h_j\}_{k=1}^{\infty}$ a complete orthonormal system in $\mathcal{H}$. For an operator $K\in \mathcal{L}(\mathcal H;L^q)$ the following two conditions are equivalent:
\begin{itemize}
\item $K \in R(\mathcal H,L^q)$;
\item $\left(\sum_{k=1}^{\infty}|Kh_k|^2\right)^{\frac 12} \in L^q$. 
\end{itemize}
Moreover, the norms $\|K\|_{R(\mathcal H;L^q)}$ and 
$\|\left(\sum_{k=1}^{\infty}|Kh_k|^2\right)^{\frac 12} \|_{L^q}$ are equivalent.
\end{proposition}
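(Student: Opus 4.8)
The plan is to express the radonifying norm as a convergent Gaussian series and then to evaluate it explicitly in $L^q$ by Fubini's theorem, thereby reducing the whole statement to elementary moment estimates for Gaussian random variables. Let $\{g_k\}_{k\in\mathbb{N}}$ be a sequence of independent standard real Gaussian random variables on an auxiliary probability space. Since the measure $\gamma_K=\gamma\circ K^{-1}$ is exactly the law of the $L^q$-valued random variable $X:=\sum_{k=1}^\infty g_k\,Kh_k$, the defining formula \eqref{normR} can be rewritten as
\begin{equation*}
\|K\|^2_{R(\mathcal H,L^q)}=\mathbb{E}\,\Big\|\sum_{k=1}^\infty g_k\,Kh_k\Big\|^2_{L^q},
\end{equation*}
and, by the It\^o--Nisio theorem, membership $K\in R(\mathcal H,L^q)$ is equivalent to the convergence of this Gaussian series in $L^2(\Omega;L^q)$. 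The task is thus to compute the right-hand side.

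First I would pass from the exponent $2$ to the exponent $q$. By the Kahane--Khintchine inequalities for Gaussian sums, all the quantities $\big(\mathbb{E}\|\sum_k g_k Kh_k\|^p_{L^q}\big)^{1/p}$, $p\in[1,\infty)$, are mutually equivalent with constants depending only on $p$; hence it suffices to estimate $\mathbb{E}\|S_N\|^q_{L^q}$ for the partial sums $S_N:=\sum_{k=1}^N g_k Kh_k$. For these I would invoke Fubini's theorem to interchange expectation and spatial integration,
\begin{equation*}
\mathbb{E}\,\|S_N\|^q_{L^q}=\int_{\mathbb{R}^2}\mathbb{E}\,\Big|\sum_{k=1}^N g_k\,(Kh_k)(x)\Big|^q\,{\rm d}x .
\end{equation*}
For each fixed $x$ the inner sum is a centered Gaussian vector in $\mathbb{R}^d$ whose second moment is $\sum_{k=1}^N|(Kh_k)(x)|^2$, so by the finite-dimensional (dimension-uniform) Kahane--Khintchine inequality its $q$-th absolute moment is comparable, with constants depending only on $q$, to $\big(\sum_{k=1}^N|(Kh_k)(x)|^2\big)^{q/2}$. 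Integrating in $x$ gives
\begin{equation*}
\mathbb{E}\,\|S_N\|^q_{L^q}\sim\Big\|\big(\textstyle\sum_{k=1}^N|Kh_k|^2\big)^{1/2}\Big\|^q_{L^q}.
\end{equation*}

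It then remains to let $N\to\infty$. On the analytic side the right-hand side increases, by monotone convergence, to $\|(\sum_{k=1}^\infty|Kh_k|^2)^{1/2}\|^q_{L^q}$. Combining this with the two previous displays yields both assertions at once: if the square function lies in $L^q$ then $\sup_N\mathbb{E}\|S_N\|^2_{L^q}<\infty$, so the Gaussian series converges and $K\in R(\mathcal H,L^q)$; conversely, if $K\in R(\mathcal H,L^q)$ then $S_N\to X$ in every $L^p(\Omega;L^q)$ and the uniform bound forces the square function into $L^q$. Passing to the limit in the last display and tracking the constants gives the claimed equivalence $\|K\|_{R(\mathcal H,L^q)}\sim\|(\sum_k|Kh_k|^2)^{1/2}\|_{L^q}$. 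The main obstacle is precisely this limiting step on the probabilistic side: one must know that a uniform bound on the moments of the partial sums of a Gaussian series actually produces a convergent (rather than merely bounded) sum in $L^2(\Omega;L^q)$, which is where the It\^o--Nisio theorem and the genuine $\sigma$-additivity of $\gamma_K$ enter; the scalar Gaussian moment computation and the Kahane--Khintchine comparisons are otherwise routine.
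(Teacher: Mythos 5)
The paper itself offers no proof of this proposition: it is imported from the literature (\cite[Proposition 13.7]{Van}, \cite[Theorem 2.3]{BrzNer}), so there is no internal argument to compare yours against. What you wrote is essentially the standard proof found in those references: represent the radonifying norm as the second moment of the Gaussian series $\sum_k g_k Kh_k$, use the Gaussian Kahane--Khintchine inequalities to trade the exponent $2$ for $q$, compute $\mathbb{E}\|S_N\|^q_{L^q}$ by Fubini and the (dimension-uniform) scalar Gaussian moment comparison, and pass to the limit by monotone convergence. All of those steps are sound, including the initial identification that $K\in R(\mathcal H,L^q)$ is equivalent to convergence of the series, which is correctly attributed to It\^o--Nisio (weak convergence of the laws of $S_N$ to the Radon extension $\gamma_K$ is upgraded to a.s. convergence, then Fernique gives all moments).

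There is, however, one genuine gap, precisely at the step you yourself flag as the main obstacle. You claim that $\sup_N\mathbb{E}\|S_N\|^2_{L^q}<\infty$ forces the Gaussian series to converge, and you attribute this to It\^o--Nisio. It\^o--Nisio does not do this: it equates a.s. convergence, convergence in probability and convergence in distribution for sums of independent symmetric summands; it never upgrades \emph{boundedness} to \emph{convergence}. Indeed that implication is false in a general Banach space: in $c_0$ the series $\sum_k g_k e_k/\sqrt{2\log k}$ has partial sums bounded in every moment (by Fernique, since $\sup_k|g_k|/\sqrt{2\log k}$ has Gaussian tails), yet diverges a.s., because $\limsup_k|g_k|/\sqrt{2\log k}=1$ a.s. The implication does hold in $L^q$, but only because $L^q$ contains no copy of $c_0$ (the Hoffmann-J{\o}rgensen--Kwapie\'n theorem), a result you neither cite nor prove. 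The cleanest repair stays inside your own toolkit: apply your moment identity to tails instead of partial sums, obtaining
\begin{equation*}
\mathbb{E}\,\|S_N-S_M\|^q_{L^q}\;\sim\;\Big\|\Big(\sum_{k=M+1}^{N}|Kh_k|^2\Big)^{1/2}\Big\|^q_{L^q},
\end{equation*}
and observe that when $\big(\sum_{k=1}^{\infty}|Kh_k|^2\big)^{1/2}\in L^q$ the right-hand side tends to $0$ as $M,N\to\infty$ by dominated convergence (the tail square function is dominated pointwise by the full one and tends to $0$ a.e.). Hence $\{S_N\}$ is Cauchy in $L^q(\Omega;L^q)$, so it converges, and $\sigma$-additivity of $\gamma_K$ follows; with this substitution, It\^o--Nisio is needed only in the converse direction, where you already use it correctly.
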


Let us fix $T>0$ and let $Y$ be a Banach space. Let us denote by $\mathcal{M}^p_{\mathcal{W}}(0,T;Y)$ the Banach space of all $\{\mathcal{F}_t\}_t$-predictable $Y$-valued processes $\Phi$ such that 
\begin{equation*}
\|\Phi\|_{\mathcal{M}^p_{\mathcal{W}}(0,T;Y)}:=\left( \mathbb{E}\int_0^T\|\Phi(t)\|^p_Y\, {\rm d}t \right)^{\frac1p}
\end{equation*}
is finite.
Given a process $\Phi$ in $\mathcal{M}^2_{\mathcal{W}}(0,T;R(\mathcal H,L^q))$, 
the stochastic integral 
$$X(t)=\int_0^t \Phi(s)\, {\rm d}\mathcal{W}(s)$$
is well defined (see \cite{NeiPhD} and \cite{Det1991} for more details and a theory of stochastic integration in a more general class of Banach spaces) and a Burkholder-Davis-Gundy type inequality holds (see \cite[Theorem 2.4, Theorem 3.3]{Det1991}).
\begin{theorem}
Let $1\le q<\infty$ and let $\mathcal{W}$ be a $\mathcal H$-cylindrical Wiener process.
If, for some $1\le m <\infty$ we have 
\begin{equation*}
\mathbb{E} \left[ \left(\int_0^T\|\Phi(t)\|^2_{R(\mathcal{H};L^q)}\, {\rm d}t\right)^{\frac m2}\right]<\infty
\end{equation*}
then $X$ has a progressively measurable $L^q$-valued version and there exists a positive constant $C_m$ such that 
\begin{equation}
\label{BDG}
\mathbb{E} \sup_{0\le t \le T}\|X(t)\|^m_{L^q} \le C_m \mathbb{E} \left[\left(\int_0^T \|\Phi(s)\|^2_{R(\mathcal{H},L^q)}\, {\rm d}s \right)^{\frac m2}\right].
\end{equation}  
\end{theorem}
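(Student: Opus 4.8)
The plan is to reduce the vector-valued inequality to the classical scalar Burkholder--Davis--Gundy inequality applied for almost every $x\in\mathbb R^2$, and then to reassemble the one-dimensional bounds by means of Minkowski's integral inequality, using throughout the characterization of $R(\mathcal H,L^q)$ recorded in Proposition \ref{pro4}. First I would prove everything for elementary (step) processes $\Phi$, for which $X(t)=\sum_{k}\int_0^t \Phi(s)h_k\,{\rm d}\beta_k(s)$ is a finite sum of ordinary It\^o integrals with continuous paths; the general integrand is recovered at the very end by approximation in $\mathcal M^m_{\mathcal W}(0,T;R(\mathcal H,L^q))$, the estimate itself guaranteeing that the approximating integrals form a Cauchy sequence whose limit is the stochastic integral and thereby furnishes the progressively measurable (indeed path-continuous) $L^q$-valued version.

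The key structural observation is that, for almost every fixed $x$, the real-valued process $M^x(t):=X(t)(x)$ is a continuous martingale whose quadratic variation is $\langle M^x\rangle_t=\int_0^t g(s,x)^2\,{\rm d}s$, where $g(s,x)^2:=\sum_k |(\Phi(s)h_k)(x)|^2$. Proposition \ref{pro4} gives $\|\Phi(s)\|_{R(\mathcal H,L^q)}\simeq\|g(s,\cdot)\|_{L^q}$, so the right-hand side of \eqref{BDG} is comparable to $\mathbb E\big(\int_0^T\|g(s,\cdot)\|_{L^q}^2\,{\rm d}s\big)^{m/2}$. I would then treat the exponent $m=q$ first, where the bookkeeping closes exactly. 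Since for every $t$ one has $\int_{\mathbb R^2}|X(t)(x)|^q\,{\rm d}x\le\int_{\mathbb R^2}(X^*(x))^q\,{\rm d}x$ with $X^*(x):=\sup_{0\le t\le T}|X(t)(x)|$, taking the supremum in $t$, then the expectation, then exchanging $\mathbb E$ and $\int_{\mathbb R^2}$ by Tonelli, and finally applying the scalar BDG inequality to each $M^x$, yields
\[
\mathbb E\sup_{0\le t\le T}\|X(t)\|_{L^q}^q
\le C_q\int_{\mathbb R^2}\mathbb E\Big(\int_0^T g(s,x)^2\,{\rm d}s\Big)^{q/2}{\rm d}x
=C_q\,\mathbb E\Big\|\Big(\int_0^T g(s,\cdot)^2\,{\rm d}s\Big)^{1/2}\Big\|_{L^q}^q .
\]
Now Minkowski's integral inequality in $L^{q/2}$, legitimate precisely because $q\ge2$, bounds the last quantity by $C_q\,\mathbb E\big(\int_0^T\|g(s,\cdot)\|_{L^q}^2\,{\rm d}s\big)^{q/2}$, which is the desired estimate for $m=q$.

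The remaining point, which I expect to be the main obstacle, is the passage to an arbitrary exponent $1\le m<\infty$. The reduction above is not purely functional-analytic: the norms in $\omega$, in $t$ and in $x$ appear in the order forced by the interchange of supremum and integral and by the pointwise bracket, and Minkowski's inequality reorders them in the needed direction only when $m=q$; for $m\ne q$ a deterministic interchange does not suffice and genuinely probabilistic input is required. Concretely, one exploits that $L^q$ (for $q\ge2$) is $2$-smooth, equivalently of martingale type $2$: this yields the base estimate $\mathbb E\|X(T)\|^2_{L^q}\le C\,\mathbb E\int_0^T\|\Phi(s)\|^2_{R(\mathcal H,L^q)}\,{\rm d}s$, from which the full range of exponents is obtained by the Lenglart--L\'epingle--Pratelli domination inequality for $0<m<2$ (applied to the submartingale $\|X(\cdot)\|_{L^q}^2$ dominated by $\int_0^\cdot\|\Phi(s)\|^2_{R(\mathcal H,L^q)}\,{\rm d}s$) and by the It\^o formula in $2$-smooth spaces together with Doob's inequality for $m\ge2$. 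This martingale-theoretic content is exactly what elevates the statement to a Burkholder--Davis--Gundy inequality rather than a consequence of Minkowski alone; for $1\le q<2$, where $L^q$ fails to be of type $2$, I would invoke directly the Banach-space stochastic integration theory of \cite{Det1991}.
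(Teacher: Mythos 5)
You should know at the outset that the paper does not prove this theorem at all: it is quoted from Dettweiler's theory of stochastic integration in $2$-smooth Banach spaces (\cite[Theorems 2.4, 3.3]{Det1991}; see also \cite{NeiPhD}), so there is no internal proof to compare against. Your reconstruction is, in its essential lines, the standard proof behind that citation, and for $q\ge 2$ it is sound: the warm-up case $m=q$ via the pointwise scalar Burkholder--Davis--Gundy inequality, Tonelli, and Minkowski's integral inequality in $L^{q/2}$ is a correct and genuinely elementary argument (it uses only Proposition \ref{pro4} and $q\ge 2$); and your route to general $m$ --- the martingale type $2$ estimate $\mathbb{E}\|X(T)\|^2_{L^q}\le C\,\mathbb{E}\int_0^T\|\Phi(s)\|^2_{R(\mathcal{H};L^q)}\,{\rm d}s$, upgraded by Lenglart domination for $m<2$ and by the It\^o formula for $\|\cdot\|^m_{L^q}$ plus Doob's inequality for $m\ge 2$ --- is exactly the $2$-smoothness machinery on which the cited theorems rest. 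Your division of labour is also correctly diagnosed: the deterministic interchange of norms closes only at $m=q$, and the remaining exponents require the probabilistic input you describe.

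The one genuine defect is your treatment of $1\le q<2$. You propose to ``invoke directly'' \cite{Det1991} there, but that cannot work: Dettweiler's theory requires $2$-smoothness (equivalently, martingale type $2$), which $L^q$ fails for $q<2$, and in fact inequality \eqref{BDG} is false in that range. Take $\mathcal{H}=\mathbb{R}$, so that $W$ is a scalar Brownian motion, and let $\Phi$ be the deterministic step function equal to $f_j$ on $[(j-1)T/n,\,jT/n)$, where $f_1,\dots,f_n\in L^q$ are disjointly supported with $\|f_j\|_{L^q}=1$. Then $X(T)=\sqrt{T/n}\,\sum_{j=1}^n \gamma_j f_j$ with $\gamma_j$ i.i.d.\ standard Gaussian, so $\mathbb{E}\|X(T)\|^q_{L^q}=c_q\,T^{q/2}\,n^{1-q/2}\to\infty$ as $n\to\infty$, while the right-hand side of \eqref{BDG} with $m=q$ remains $C_q\,T^{q/2}$; by the Gaussian equivalence of moments the same blow-up occurs for every $m$. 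So the failure of Minkowski's inequality for $q<2$ that you flagged is not a technical obstacle to be circumvented by a better reference, but a real obstruction: the hypothesis $1\le q<\infty$ in the statement as printed is too generous, the correct range is $2\le q<\infty$, and that is the only range the paper ever uses ($q=2$ in \textbf{(IG1)} and $q>2$ in \textbf{(IG2)}). Restricted to $q\ge 2$, your proof is complete in outline.
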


On the covariance operator $G$ appearing in equation \eqref{NS_0} 
we make the following assumptions. We consider $q>2$ and assume that  there exists $g \in (0,1)$ such that 
\begin{description}
\item [(IG1)] 
The mapping $ G:\mathbb{L}^2 \rightarrow L_{\text{HS}}(\mathcal{H};H^{1-g,2})$ is well defined and 
\begin{equation*}
\sup_{v \in \mathbb{L}^2}\| G(v)\|_{L_{\text{HS}}(\mathcal{H};H^{1-g,2})}=:C_{g,2} < \infty,
\end{equation*}
\item[(IG2)] 
The mapping $G:\mathbb{L}^2 \rightarrow R(\mathcal{H};H^{1-g,q})$ is well defined and 
\begin{equation*}
\sup_{v \in \mathbb{L}^2}\|G(v)\|_{R(\mathcal{H};H^{1-g,q})}=: C_{g,q} < \infty.
\end{equation*}

\item[(IG3)] 
If assumption \textbf{(IG1)} holds, then
for any $\varphi \in H^{1-g,2}$ and any $v \in \mathbb{L}^2$ the mapping $v \rightarrow G(v)^*\varphi \in \mathcal{H}$ is continuous when in $\mathbb{L}^2$ we consider the Fr\'echet topology 
 inherited from the space $\mathbb{L}^2_{loc}$ or the weak topology of $\mathbb{L}^2$.
\item[(IG4)] For all $z\in C^{\infty}_{sol}$ 
the real valued function $v \mapsto \|G(v)^*z\|_{\mathcal{H}}$ is continuous on $H^{1,2}$ endowed with the strong $L^2$-topology.
\item[(IG5)] 
If assumption \textbf{(IG1)} holds, then $G$ ia s  Lipschitz continuous function  when we consider a weak norm,  i.e.
\begin{equation*}
\text{there exists} \  L_g>0:\|G(v_1)-G(v_2)\|_{L_{HS}(\mathcal{H};\mathbb{L}^2)} \le L_g \|v_1-v_2\|_{\mathbb{L}^2}
\end{equation*}
for any $v_1, v_2 \in \mathbb L^2$.

\end{description}
\begin{remark}
\label{rem1}
\begin{enumerate}[label=\roman{*}., ref=(\roman{*})]
\item A map $G:\mathbb{L}^2\rightarrow R(\mathcal{H};H^{1-g,q})$ is well defined iff the map $J^{1-g} G:\mathbb{L}^2\rightarrow R(\mathcal{H};\mathbb L^{q})$ is well defined. Moreover
\begin{equation*}
\|J^{1-g} G(v)\|_{R(\mathcal{H};\mathbb L^{q})} = \| G(v)\|_{R(\mathcal{H};H^{1-g,q})} < C_{g,q}, \qquad v \in \mathbb{L}^2.
\end{equation*}
\item From \eqref{BDG} and \textbf{(IG1)}, for any finite 
$m\ge 1$ we have
\begin{equation*}
\mathbb{E} \left\Vert \int_0^t  G(v(s))\, {\rm d}W(s) \right\Vert^m_{H^{1-g,2} }\le C_m(C_{g,2})^m t^{\frac m2}.
\end{equation*} 
\item If $G(v) \in L_{HS}(\mathcal{H};H^{1-g,2})$ with the uniform bound of \textbf{(IG1)}, then the same holds for the adjoint operator, i.e.
\begin{equation*}
\sup_{v \in \mathbb{L}^2}\|G(v)^*\|_{L_{HS}(H^{1-g,2};\mathcal{H})}=C_{g,2}.
\end{equation*}
\end{enumerate}
\end{remark}

The noise driving equation \eqref{vort} is obtained by taking the curl of the noise driving equation \eqref{NS_0}. Bearing in mind \eqref{W}, it is given by
\begin{equation}
\text{curl}(G(v)W(t))=\sum_{k=1}^{\infty}\beta_k(t) \text{curl}(G(v)h_k), \qquad t \in \left[0,T\right].
\end{equation}

Let $q>2$. Notice that, for all $v \in \mathbb{L}^2$ and $k \in \mathbb{N}$, $G(v)h_k \in H^{1-g,2}\cap H^{1-g,q}$. 
By taking the curl of this latter quantity we loose one order of differentiability, namely curl$(G(v)h_k) \in W^{-g,2} \cap W^{-g,q}$. Formally, we introduce the operator $\tilde G$ in the following way: given $v \in \mathbb{L}^2$, for all $\psi \in \mathcal{H}$, $\tilde G(v)(\psi):=\text{curl}(G(v)\psi)$. Thus we have that the mapping $\tilde G$ is well defined from $\mathbb{L}^2$ to $L_{\text{HS}}(\mathcal{H};W^{-g,2}) \cap R(\mathcal{H};W^{-g,q})$. 

Let us  notice  that an analogue of Remark \ref{rem1} holds.
\begin{remark}
\label{rem4}
We have that
\begin{enumerate}[label=\roman{*}., ref=(\roman{*})]
\item 
a map $\tilde G:\mathbb{L}^2\rightarrow R(\mathcal{H};W^{-g,q})$ is well defined iff the map $J^{-g} \tilde G:\mathbb{L}^2\rightarrow R(\mathcal{H};L^{q})$ is well defined. Moreover
\begin{equation*}
\|J^{-g} \tilde G(v)\|_{R(\mathcal{H}; L^{q})} = \| \tilde G(v)\|_{R(\mathcal{H};W^{-g,q})} < C_{g,q}, \qquad v \in \mathbb{L}^2.
\end{equation*}
\item
From \eqref{BDG} and \textbf{(IG1)}, for any finite $m\ge 1$ we have
\begin{equation}
\label{wpstoc}
\mathbb{E} \left\Vert \int_0^t  \tilde G(v(s))\, {\rm d}W(s) \right\Vert^m_{W^{-g,2} }\le C_m(C_{g,2})^m t^{\frac m2}.
\end{equation} 
\end{enumerate}
\end{remark}
Therefore, the assumptions on $G$ are transferred to $\tilde G$. For instance, we have
\begin{align*}
\|\tilde G(v)\|^2_{L_{HS}(\mathcal H;W^{-g,2})}
&= \sum_{k=1}^\infty \|\tilde G(v)h_k\|^2_{W^{-g,2}}
= \sum_{k=1}^\infty \|\text{curl } (G(v)h_k)\|^2_{W^{-g,2}}
\\
&\le \sum_{k=1}^\infty \|G(v)h_k\|^2_{H^{1-g,2}} 
=\|G(v)\|^2_{L_{HS}(\mathcal H;H^{1-g,2})},
\end{align*}
and, thanks to \eqref{equivnorm}, Proposition \ref{pro4} and Remark \ref{rem1}(i)
\begin{align*}
\|\tilde G(v)\|^q_{R(\mathcal H;W^{-g,q})}
&= \left(\mathbb{E} \left\Vert\sum_{k}\beta_k \tilde G(v)h_k\right\Vert^2_{W^{-g,q}} \right)^{\frac q2}
= \left(\mathbb{E} \left \Vert \tilde G(v)\sum_{k}\beta_k h_k\right\Vert^2_{W^{-g,q}} \right)^{\frac q2}
\\
&=\left(\mathbb{E} \left \Vert \text{curl}\left( G(v)\sum_{k}\beta_k h_k\right)\right\Vert^2_{W^{-g,q}} \right)^{\frac q2} 
\le \left(\mathbb{E} \left \Vert G(v)\sum_{k}\beta_k h_k\right\Vert^2_{H^{1-g,q}} \right)^{\frac q2}
\\
&= \left(\mathbb{E} \left \Vert \sum_{k}\beta_kG(v) h_k\right\Vert^2_{H^{1-g,q}} \right)^{\frac q2}
= \|G(v)\|^q_{R(\mathcal H;H^{1-g,q})}
\end{align*}

With a little abuse of notation we shall write 
$\tilde G(v){\rm d}W(t)$ instead of curl$(G(v){\rm d}W(t))$, where 
$\tilde G:= \text{curl}G$.
\newline
Let us notice that the set of assumptions made on the covariance operator $G$ are rather good to deal with equation \eqref{NS_0} in the spaces $\mathbb L^2$ or $\mathbb L^q$.
On the other hand, when we deal with the equation for the vorticity, 
we are concerned with a covariance operator not regular enough to use the It\^o calculus.

For the sake of clarity, among the above assumptions made on $G$, we rewrite in terms of $\tilde G$ those assumptions that we will use in the following. Let $0<g<1$ and $q>2$. Then
\begin{description}
\item[(I$\tilde G$1)] The mapping $\tilde G:\mathbb{L}^2 \rightarrow L_{HS}(\mathcal{H};W^{-g,2})$ is well defined and 
\begin{equation*}
\sup_{v \in \mathbb{L}^2}\|\tilde G(v)\|_{L_{HS}(\mathcal{H};W^{-g,2})}=:C_{g,2} < \infty.
\end{equation*}
\item[(I$\tilde G$2)] The mapping $\tilde G:\mathbb{L}^2 \rightarrow R(\mathcal{H};W^{-g,q})$ is well defined and 
\begin{equation*}
\sup_{v \in \mathbb{L}^2}\|\tilde G(v)\|_{R(\mathcal{H};W^{-g,q})}=:C_{g,q} < \infty.
\end{equation*}
\item[(I$\tilde G$3)] 
If assumption \textbf{(I$\tilde G$1)} holds, then for any $\varphi \in W^{-g,2}$ and any $v \in \mathbb{L}^2$ the mapping $v \rightarrow \tilde G(v)^*\varphi \in \mathcal{H}$ is continuous when in $\mathbb{L}^2$ we consider the Fr\'echet topology inherited from the space $\mathbb{L}^2_{loc}$ or the weak topology of $\mathbb{L}^2$.
\end{description}
\begin{example}
Let $G(v)h_k=c_k\sigma(v)e_k$ with $\{e_k\}_k$ a complete orthonormal system in $H^{1-g,2}$, $c_k \in\mathbb{R}$ and $\sigma:\mathbb{L}^2 \rightarrow \mathbb{R}$ such that 
\begin{align*}
&\sup_{v \in \mathbb{L}^2} |\sigma(v)|:=C_{\sigma}^1 < \infty,
\\
&\exists \ L>0: |\sigma(v_1)-\sigma(v_2)| \le L \|v_1-v_2\|_{\mathbb{L}^2}, \;
\forall v_1, v_2 \in\mathbb{L}^2 
\\
&\sigma(v_1)\rightarrow\sigma(v_2) \ \emph{if $v_1$ converges to $v_2$ in $H^{1,2}$ endowed with the strong $\mathbb{L}^2$ topology},
\\
&\sigma(v_1)\rightarrow\sigma(v_2) \ \emph{if $v_1$ converges to $v_2$ in $\mathbb{L}^2_w$ or $\mathbb{L}^2_{loc}$}.
\end{align*}

For instance, the above conditions on $\sigma$ are fulfilled for $\sigma(v)=\frac{\langle v,h\rangle^2}{1+\langle v,h\rangle^2}$ with a given $h \in \mathbb{L}^2$.
\newline
Condition \textbf{(IG1)} holds if and only if
\begin{equation}
\sum_{k=1}^{\infty}c_k^2< \infty
\end{equation}
and \textbf{(IG2)} hold if $e_k \in H^{1-g,q}$ and 
\begin{equation}
\sum_{k=1}^{\infty}c_k^2\|e_k\|^2_{H^{1-g,q}}< \infty.
\end{equation}
In order to prove \textbf{(IG3)} notice that $G(v)^*e_k= \sigma(v)c_kh_k$ for any $k$; therefore, given $\varphi \in H^{1-g,2}$ (with $\varphi =\sum_{k=1}^{\infty}\langle \varphi, e_k\rangle_{H^{1-g,2}}e_k$ and $\|\varphi\|^2_{H^{1-g,2}} = \sum_{k=1}^{\infty}|\langle \varphi, e_k\rangle_{H^{1-g,2}}|^2$)
\begin{align*}
\|G(v_1)^*\varphi-G(v_2)^*\varphi\|^2_{\mathcal{H}}
=&\left\Vert\sum_{k=1}^{\infty}\left[G(v_1)^*\langle \varphi, e_k\rangle_{H^{1-g,2}}e_k-G(v_2)^*\langle \varphi, e_k\rangle_{H^{1-g,2}}e_k \right] \right\Vert^2_{\mathcal{H}}
\\
&=\left(\sigma(v_1)- \sigma(v_2) \right)^2\sum_{k=1}^{\infty}c_k^2|\langle \varphi, e_k\rangle_{H^{1-g,2}}|^2
\\
&\le \left(\|\varphi\|^2_{H^{1-g,2}} \sum_{k=1}^{\infty}c_k^2\right)\left(\sigma(v_1)- \sigma(v_2) \right)^2.
\end{align*}
In a analogous way we can prove that \textbf{(IG4)} holds. 
Finally, \textbf{(IG5)} follows, because
\begin{align*}
\|G(v_1)-G(v_2)\|^2_{L_{HS}(\mathcal{H};\mathbb{L}^2)}
&\le \left( \sigma(v_1)-\sigma(v_2)\right)^2\left(\sum_{k=1}^{\infty}c_k^2 \|e_k\|^2_{H^{1-g,2}}\right)
\le L^2 \left( \sum_{k=1}^{\infty}c_k^2\right) \|v_1-v_2\|^2_{\mathbb{L}^2}.\end{align*}
Notice that in this example we have  $\text{curl}(G(v)h_k)=c_k\sigma(v)\text{curl }e_k$.
\end{example}

\section{Existence of a unique solution to the Navier-Stokes equations \eqref{NS_0}}
\label{v_sec}
In order to prove the existence of a solution of \eqref{vort}, as well as the desired regularity, we need a certain regularity on the solution process $v$ of \eqref{NS_0}. In this Section we remind an existence and uniqueness result concerning system \eqref{NS_0} and then, under stronger assumptions on the regularity of the initial datum and the covariance operator of the noise, we prove 
higher regularity for its solution.

As usual, we project the first equation of \eqref{NS_0} onto the space of divergence free vectors. 
Thus, we get rid of the pressure and we obtain the abstract form of the Navier-Stokes equations
\begin{equation}
\label{NSabs}
\begin{cases}
\displaystyle  {\rm d}v(t)+ \left[Av(t) +B(v(t),v(t))\right]\,{\rm d}t
       = G(v(t))\, {\rm d}W(t), \quad t \in \left[0,T\right]
 \\
\displaystyle v(0)=v_0,
\end{cases}
\end{equation}
We give the following notion of solution.
\begin{definition}
\label{mart_sol2}
A martingale solution to the Navier-Stokes problem \eqref{NSabs} is a triple consisting of a filtered probability space $(\Omega, \mathcal{F}, \{\mathcal{F}_t\}_{t \in \left[0,T\right]}, \mathbb{P})$, an $\{\mathcal{F}_t\}$-adapted cylindrical $\mathcal{H}$-Wiener process $W$ and an $\{\mathcal{F}_t\}$-adapted measurable process $v$, such that
\begin{enumerate}[label=\roman{*}., ref=(\roman{*})]
\item $v:\left[0,T\right] \times \Omega \rightarrow \mathbb{L}^2$ with $\mathbb{P}$-a.e. path  
\begin{equation*}
v(\cdot, \omega) \in C(\left[0,T\right];\mathbb{L}^2) \cap L^2(0,T;H^{1,2});
\end{equation*}
\item for all $z\in C^{\infty}_{\text{sol}}$ and $t \in \left[0,T\right]$ one has $\mathbb{P}$-a.s.
\begin{equation}
\label{weak_NS_bad}
\langle v(t),z \rangle +  
 \int_0^t \langle A  v(s) ,  z\rangle \, {\rm d}s
+\int_0^t  \langle B(v(s),v(s)),  z\rangle\, {\rm d}s
=\langle v_0,z\rangle+\langle \int_0^tG(v(s))\, {\rm d}W(s),z\rangle.
\end{equation}
\end{enumerate}
\end{definition}

The following result holds.
\begin{proposition}
\label{MiR}
Assume that $v_0 \in \mathbb{L}^2$. If assumptions \textbf{(IG1)} and \textbf{(IG3)} are satisfied, then there exists 
a martingale solution to \eqref{NSabs} such that 
\begin{equation}
\label{energyv}
\mathbb{E}\left[ \sup_{0\le t \le T}\|v(t)\|^2_{\mathbb{L}^2}+ \int_0^T \|\nabla v(t)\|^2_{L^2}\, {\rm d}t\right]<\infty.
\end{equation}
Moreover, under \textbf{(IG5)}, pathwise uniqueness holds.
\end{proposition}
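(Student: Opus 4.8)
The plan is to construct the martingale solution by a Faedo--Galerkin approximation combined with a tightness/compactness argument adapted to the unbounded domain, following the scheme of \cite{BrzMot} and \cite{BrzFer}, and then to upgrade it to a strong solution by proving pathwise uniqueness and invoking the Yamada--Watanabe theorem. First I would set up finite--dimensional approximations: using the Hilbert space $\mathbb{U}$ of \eqref{mathbbU}, which embeds densely and compactly into $H^{1,2}$, I fix an orthonormal basis of $\mathbb{L}^2$ made of elements of $\mathbb{U}$ and let $P_n$ be the orthogonal projection onto the span $\mathbb{L}^2_n$ of the first $n$ of them. Projecting \eqref{NSabs} yields a finite--dimensional It\^o SDE for $v_n=P_n v$ with locally Lipschitz coefficients, hence a unique local solution which the bounds below make global. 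Applying It\^o's formula to $\|v_n(t)\|^2_{\mathbb{L}^2}$, the nonlinear term drops out thanks to the antisymmetry \eqref{B00}, and assumption \textbf{(IG1)} controls the quadratic variation of the stochastic term; after taking expectations and using \eqref{BDG}, I obtain the uniform energy bound \eqref{energyv} with a constant independent of $n$.

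The heart of the existence proof is tightness. The estimate \eqref{energyv} controls the laws of $\{v_n\}$ in $L^\infty(0,T;\mathbb{L}^2)\cap L^2(0,T;H^{1,2})$, but since $H^{1,2}\hookrightarrow\mathbb{L}^2$ is \emph{not} compact on $\mathbb{R}^2$, the classical Aubin--Lions/Dubinsky theorem is unavailable. Instead I would prove tightness in $C([0,T];\mathbb{L}^2_w)\cap L^2(0,T;\mathbb{L}^2_{loc})$ via the modified compactness criterion of \cite{BrzMot}: the energy bound gives the uniform integrability, while a fractional--in--time estimate on $v_n$ (from the equation, bounding the drift in $H^{-1,2}$ by \eqref{Bnew} and the stochastic integral in $H^{1-g,2}$ through \textbf{(IG1)} and \eqref{BDG}) gives the equicontinuity needed for the local compactness. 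By the Skorokhod representation theorem I then obtain, on a new stochastic basis, a.s. convergent copies $\tilde v_n\to\tilde v$ with a limiting cylindrical Wiener process $\tilde W$, and pass to the limit in \eqref{weak_NS_bad}: the local strong convergence identifies the limit of $B(\tilde v_n,\tilde v_n)$, and assumption \textbf{(IG3)}, continuity of $v\mapsto G(v)^*\varphi$ in the weak/local topology, is exactly what identifies the limit of the stochastic integral as $\int_0^t G(\tilde v)\,{\rm d}\tilde W$. This produces a martingale solution satisfying \eqref{energyv}. I expect this tightness step to be the main obstacle, precisely because the failure of compactness forces one to work simultaneously with the weak and local topologies and to verify the nonstandard criterion, with extra care since the noise only lives in $H^{1-g,2}$ rather than $H^{1,2}$.

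For pathwise uniqueness I would take two solutions $v_1,v_2$ on the same basis driven by the same $W$, set $w=v_1-v_2$, and apply It\^o's formula to $\|w(t)\|^2_{\mathbb{L}^2}$. Writing $B(v_1,v_1)-B(v_2,v_2)=B(w,v_1)+B(v_2,w)$ and using \eqref{B00} to cancel $\langle B(v_2,w),w\rangle$, the only surviving nonlinear contribution is $\langle B(w,v_1),w\rangle$, which in two dimensions I bound by $\|w\|^2_{\mathbb{L}^4}\|\nabla v_1\|_{L^2}\le C\|w\|_{\mathbb{L}^2}\|\nabla w\|_{L^2}\|\nabla v_1\|_{L^2}$ through the Ladyzhenskaya inequality; Young's inequality then absorbs $\|\nabla w\|^2_{L^2}$ into the dissipation $2\langle Aw,w\rangle=2\|\nabla w\|^2_{L^2}$ and leaves a term $C\|w\|^2_{\mathbb{L}^2}\|\nabla v_1\|^2_{L^2}$. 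The difference of the noise terms is controlled by the Lipschitz assumption \textbf{(IG5)}, giving a contribution bounded by $L_g^2\|w\|^2_{\mathbb{L}^2}$. A stochastic Gronwall lemma, applicable because $\int_0^T\|\nabla v_1(t)\|^2_{L^2}\,{\rm d}t<\infty$ almost surely by \eqref{energyv}, then forces $w\equiv 0$. Combining the existence of a martingale solution with this pathwise uniqueness, the Yamada--Watanabe theorem furnishes a unique strong solution on the originally prescribed probability space.
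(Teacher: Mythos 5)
Your proposal is correct and takes essentially the same approach as the paper: for existence the paper simply cites the Galerkin--tightness construction of Brze\'zniak--Motyl (the scheme you sketch in detail, including the role of \textbf{(IG3)} in identifying the limit of the stochastic integral), and for pathwise uniqueness it runs the same energy argument on $V=v_1-v_2$, implementing your ``stochastic Gronwall'' step concretely via the exponential weight $e^{-\int_0^t(C_{\varepsilon}\|\nabla v_1(s)\|^2_{L^2}+L_g^2)\,{\rm d}s}$ (the Schmalfuss trick), so that after absorbing the nonlinear term only a martingale survives on the right-hand side and taking expectations forces $V\equiv 0$. The upgrade to a strong solution via the Yamada--Watanabe argument is likewise how the paper concludes.
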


\begin{proof}
The existence of a martingale solution, for square summable initial velocity, follows from \cite[Theorem 5.1 and Lemma 7.2]{BrzMot}. 
The hypothesis we made on the covariance operator of the noise are stronger than those made in \cite{BrzMot}. In particular these latter are implied by our assumptions.
\newline
We prove the uniqueness of the solution by means of a rather classical argument (see \cite{Sch1997}).
Let $v_1$ and $v_2$ be two martingale solutions to system \eqref{NSabs} with $v_1(0)=v_2(0)$. 
Let $V=v_1-v_2$.
This difference satisfies the equation
\begin{equation*}
\begin{cases}
{\rm d}V(t)+ \left[AV(t)+ B(v_1(t),v_1(t))-B(v_2(t),v_2(t))\right]\, {\rm d}t=\left[G(v_1(t))-G(v_2(t)) \right]{\rm d}W(t)
\\
V(0)=0
\end{cases}
\end{equation*}
and this is equivalent to 
\begin{equation*}
\begin{cases}
{\rm d}V(t)+ \left[AV(t)+ B(V(t),v_1(t))+B(v_2(t),V(t))\right]\, {\rm d}t=\left[G(v_1(t))-G(v_2(t)) \right]{\rm d}W(t)
\\
V(0)=0
\end{cases}
\end{equation*}
We shall use the It\^o formula for ${\rm d}\left(e^{-\int_0^t \psi(s)\, {\rm d}s}\|V(t)\|^2_{L^2} \right)$, by choosing $\psi$ as
\begin{equation*}
\psi(t)= (a\|\nabla v_1(t)\|^2_{L^2}+L_g^2), \quad t \in \left[0,T\right],
\end{equation*}
where $L_g$ is the Lipschitz constant given in \textbf{(IG5)} and $a$ is a positive constant given later on. Since $v_1\in L^2(0,T;H^{1,2})$, $\psi \in L^1(0,T)$ $\mathbb{P}$-a.s..
For $t \in \left[0,T\right]$, we have 
\begin{align*}
{\rm d}\left(e^{-\int_0^t \psi(s)\, {\rm d}s}\|V(t)\|^2_{L^2} \right)
&=-\psi(t)e^{-\int_0^t \psi(s)\, {\rm d}s}\|V(t)\|^2_{L^2} {\rm d}t
\\
&+e^{-\int_0^t \psi(s)\, {\rm d}s}{\rm d}\|V(t)\|^2_{L^2},
\end{align*}
where the latter differential is given by
\begin{align*}
{\rm d}\|V(t)\|^2_{L^2}
=2&\left[-\langle AV(t),V(t)\rangle - \langle B(V(t), v_1(t)), V(t)\rangle-\langle B(v_2(t),V(t)),V(t)\rangle \right]{\rm d}t
\\
&+2\langle \left[G(v_1(t))-G(v_2(t)) \right]{\rm d}W(t),V(t)\rangle
\\
&+\|G(v_1(t))-G(v_2(t))\|^2_{L_{\text{HS}}(\mathcal{H};\mathbb{L}^2)}.
\end{align*}
For the first term, we get
\begin{equation*}
\langle AV(t),V(t)\rangle=\|\nabla V(t)\|^2_{L^2}.
\end{equation*}
As regards the non linear term, by \eqref{B00} and Gagliardo-Nierenberg interpolation's inequality we get
\begin{align*}
\langle B(V,v_1),V\rangle+\langle B(v_2,V), V\rangle 
&=\langle B(V,v_1),V\rangle
\le \|V\|^2_{L^4} \|\nabla v_1\|_{L^2}
\le C \|V\|_{L^2}\|\nabla V\|_{L^2}\|\nabla v_1\|_{L^2}.
\end{align*}
By Young inequality, we can infer that for all $\varepsilon >0$ there exists a constant $C_{\varepsilon}>0$ such that 
\begin{equation*}
2\langle B(V,v_1),V\rangle
\le \varepsilon \|\nabla V\|^2_{L^2}+C_{\varepsilon}\|\nabla v_1\|^2_{L^2} \|V\|_{L^2}^2.
\end{equation*}
By \textbf{(IG5)} it follows
\begin{equation*}
\|G(v_1)-G(v_2)\|^2_{L_{\text{HS}}(\mathcal{H};\mathbb{L}^2)}
\le  L^2_g\|V\|^2_{L^2}.
\end{equation*}
So we get
\begin{align*}
{\rm d}\|V(t)\|^2_{L^2} 
&\le (\varepsilon-2) \|\nabla V(t)\|^2_{L^2}
+ \left(C_{\varepsilon}\|\nabla v_1(t)\|^2_{L^2}+L_g^2\right) \|V(t)\|^2_{L^2}
\\
&+\langle\left[G(v_1(t))-G(v_2(t)) \right]{\rm d}W(t),V(t) \rangle.
\end{align*}
Putting $a:=C_{\varepsilon}$, we obtain 
\begin{align*}
{\rm d}\left(e^{-\int_0^t \psi(s)\, {\rm d}s}\|V(t)\|^2_{L^2} \right)
&\le (\varepsilon-2)e^{-\int_0^t \psi(s)\, {\rm d}s}\|\nabla V(t)\|^2_{L^2} 
\\
&+e^{-\int_0^t \psi(s)\, {\rm d}s}\langle\left[G(v_1(t))-G(v_2(t)) \right]{\rm d}W(t),V(t) \rangle.
\end{align*}
Integrating in both sides we get
\begin{align}
\label{Schm}
e^{-\int_0^t \psi(s)\, {\rm d}s}\|V(t)\|^2_{L^2} 
&+(2-\varepsilon)\int_0^te^{-\int_0^r \psi(s)\, {\rm d}s}\|\nabla V(r)\|^2_{L^2} \, {\rm d}r
\notag\\
&\le \int_0^te^{-\int_0^r \psi(s)\, {\rm d}s}\langle\left[G(v_1(r))-G(v_2(r)) \right]{\rm d}W(r),V(r) \rangle.
\end{align}
Let us choose $0<\varepsilon <2$, then by \eqref{Schm} we have
\begin{align*}
e^{-\int_0^t \psi(s)\, {\rm d}s}\|V(t)\|^2_{L^2} 
&\le \int_0^te^{-\int_0^r \psi(s)\, {\rm d}s}\langle\left[G(v_1(r))-G(v_2(r)) \right]{\rm d}W(r),V(r) \rangle.
\end{align*}
Since the r.h.s. is a square integrable martingale, taking the expectation in both members we get
\begin{equation*}
\mathbb{E} \left[e^{-\int_0^t \psi(s)\, {\rm d}s}\|V(t)\|^2_{L^2}  \right] \le 0, \qquad \forall t \in \left[0,T\right].
\end{equation*}
Thus in particular, for any $t \in \left[0,T\right]$
\begin{equation*}
e^{-\int_0^t \psi(s)\, {\rm d}s}\|V(t)\|^2_{L^2}=0, \qquad \mathbb{P}-a.s.. 
\end{equation*}
Thus, if we take a sequence $\{t_k\}_{k=1}^{\infty}$, which is dense in $\left[0,T\right]$, we have
\begin{equation*}
\mathbb{P}\{\|V(t_k)\|_{L^2}=0 \ \text{for all} \ k \in \mathbb{N}\}=1.
\end{equation*}
Since each path of the process $V$ belongs to $C(\left[0,T\right];\mathbb{L}^2)$, we get
\begin{equation*}
\mathbb{P} \{v_1(t)=v_2(t) \ \text{for all} \ t \in \left[0,T\right]\}=1
\end{equation*}
and the proof is complete.
\end{proof}

In particular, pathwise uniqueness and existence of martingale solutions implies existence of a strong solution (see e.g \cite{Ikeda1989}). 
\newline
Here we improve the regularity of the solution under stronger assumptions on the regularity of the initial datum and the covariance operator.

\begin{proposition}
\label{Lqvpro}
Let $q>2$ and
assume that conditions \textbf{(IG1)}, \textbf{(IG2)}, \textbf{(IG3)} and \textbf{(IG5)} hold. 
If  $v_0 \in \mathbb{L}^2\cap\mathbb{L}^q$, then the unique strong solution $v$ to \eqref{NSabs}, in addition to \eqref{energyv}, satisfies for every $1\le p<\infty$
\begin{equation}
\label{Lqv}
\mathbb{E} \sup_{0\le t \le T}\|v(t)\|^p_{\mathbb{L}^q}<C,
\end{equation}
for a positive constant $C$, depending on $q$, $T$, $\|v_0\|_{\mathbb{L}^q}$ and $C_{g,q}$. 
\end{proposition}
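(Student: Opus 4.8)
The plan is to obtain the $\mathbb{L}^q$ estimate \eqref{Lqv} by applying the It\^o formula to the functional $v \mapsto \|v\|_{\mathbb{L}^q}^q$ along the strong solution, exploiting the cancellation of the nonlinear term provided by \eqref{Bq} and the boundedness of the noise in $\mathbb{L}^q$ coming from \textbf{(IG2)}. Since $q>2$, the map $w \mapsto \|w\|_{\mathbb{L}^q}^q$ is twice Fr\'echet differentiable on $\mathbb{L}^q$ with first derivative acting as $\langle \cdot, q|v|^{q-2}v\rangle$, so formally It\^o's formula in $\mathbb{L}^q$ gives
\begin{align*}
\|v(t)\|_{\mathbb{L}^q}^q
&= \|v_0\|_{\mathbb{L}^q}^q
- q\int_0^t \langle Av(s), |v(s)|^{q-2}v(s)\rangle\, {\rm d}s
- q\int_0^t \langle B(v(s),v(s)), |v(s)|^{q-2}v(s)\rangle\, {\rm d}s \\
&\quad + q\int_0^t \langle |v(s)|^{q-2}v(s), G(v(s))\, {\rm d}W(s)\rangle
+ \frac{q}{2}\int_0^t \|G(v(s))\|^2_{\ldots}\, {\rm d}s,
\end{align*}
where the last term abbreviates the trace of the second derivative against the noise covariance. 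The key simplifications are: the nonlinear term vanishes identically by Lemma~\ref{lemmaB}\,iii) (equation \eqref{Bq}); and the viscous term is nonnegative, $\langle Av, |v|^{q-2}v\rangle \ge 0$, hence can be discarded after moving it to the left-hand side.

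First I would make the It\^o computation rigorous. Because the covariance of the noise is not regular enough to apply It\^o calculus directly in $\mathbb{L}^q$ (this is precisely the difficulty flagged in the introduction), I would work with a Galerkin or mollified approximation $v^n$ of the strong solution, for which the finite-dimensional It\^o formula is classical, derive the estimate uniformly in $n$, and then pass to the limit. Alternatively, one regularizes the function $w\mapsto \|w\|^q$ and passes to the limit in the regularization parameter. The crucial bound is on the It\^o correction term: by \textbf{(IG2)} and Proposition~\ref{pro4}, the second-order trace term is controlled by $\|G(v)\|_{R(\mathcal H;\mathbb{L}^q)}^2 \,\|v\|_{\mathbb{L}^q}^{q-2}$, which by the uniform bound $C_{g,q}$ gives a term of the form $C\,(1+\|v(s)\|_{\mathbb{L}^q}^q)$; the contraction property of the heat semigroup guarantees the sign of the Laplacian term.

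Next I would bootstrap from the case $p=q$ to all finite $p$. For $p=q$, after discarding the viscous term and using the vanishing nonlinearity, I take supremum in $t$ and then expectation; the stochastic term is handled by the Burkholder-Davis-Gundy inequality \eqref{BDG} with $m=1$, estimating the quadratic variation by $\int_0^T \|G(v(s))\|^2_{R(\mathcal H;\mathbb{L}^q)}\|v(s)\|^{q-2}_{\mathbb L^q}\,{\rm d}s$, which after Young's inequality splits into a small multiple of $\mathbb{E}\sup_t\|v(t)\|^q_{\mathbb L^q}$ (absorbed on the left) and a time integral controlled via Gronwall's lemma by the deterministic bound $C_{g,q}$. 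This yields \eqref{Lqv} for $p=q$; for general $p$ one repeats the argument applying It\^o to $\|v\|_{\mathbb L^q}^{p}$ (equivalently raising to an appropriate power and using \eqref{BDG} with larger $m$), or one invokes the higher-moment version of \eqref{BDG} directly.

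The main obstacle I expect is the rigorous justification of the It\^o expansion, not the algebra: the hypotheses deliberately avoid trace-class regularity in the energy space, so one cannot simply quote It\^o's formula in $\mathbb{L}^q$. The care lies in the approximation scheme—showing that the nonlinear cancellation \eqref{Bq} survives at the approximate level (or is recovered in the limit), and that the BDG bound \eqref{BDG} together with \textbf{(IG2)} furnishes a quadratic-variation estimate uniform in the approximation parameter, so that Fatou's lemma delivers the final bound on the genuine strong solution. Everything else is a standard Young/Gronwall absorption argument.
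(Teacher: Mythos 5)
Your proposal follows essentially the same route as the paper's proof: It\^o's formula applied to a power of the $\mathbb{L}^q$-norm (the paper works with $\|\cdot\|^p_{\mathbb{L}^q}$ for $p\ge q$ directly and squares, rather than bootstrapping from $p=q$), cancellation of the nonlinear term via \eqref{Bq}, the favorable sign of the dissipation term, and Burkholder-Davis-Gundy combined with Young's inequality, the uniform bound $C_{g,q}$ from \textbf{(IG2)} and Gronwall's lemma, with rigor resting on Galerkin approximation exactly as the paper indicates at the start of its proof. Two harmless slips: the quadratic variation of the martingale term is bounded by $\int_0^T \|G(v(s))\|^2_{R(\mathcal{H};\mathbb{L}^q)}\|v(s)\|^{2(q-1)}_{\mathbb{L}^q}\, {\rm d}s$ (exponent $2(q-1)$, not $q-2$; the exponent $q-2$ belongs to the It\^o correction term), and for the \emph{velocity} equation the noise is in fact regular enough for It\^o calculus in $\mathbb{L}^q$, since $H^{1-g,q}\subset \mathbb{L}^q$ --- the irregularity flagged in the introduction concerns the \emph{vorticity} equation --- but neither point affects the validity of your argument.
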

\begin{proof}
The proof of existence of solutions requires some Galerkin approximation $v^n$ of $v$, for which a priori estimates are proved uniformly in $n$. Then, by a tightness argument one can pass to the limit proving the existence of a solution. Bearing in mind the existence and uniqueness result given by Proposition \ref{MiR}, we just compute the needed $\mathbb{L}^q$-estimates in order to get \eqref{Lqv}.

Let $q\ge2$ and $p\ge q$. Applying It\^o formula to the function $\|\cdot\|^p_{\mathbb{L}^q}$, 
for all $t \in \left[0,T\right]$ we get 
\begin{align}
\label{stimLqv}
\|v(t)\|_{\mathbb{L}^q}^p 
&\le \|v_0\|^p_{\mathbb{L}^q} + p
\int_0^t \|v(s)\|^{p-q}_{\mathbb{L}^q} \langle |v(s)|^{q-2} v(s), \left[-Av(s)-B(v(s),v(s))\right]\rangle\, {\rm d}s
\notag\\
& \quad + p \int_0^t \|v(s)\|^{p-q}_{\mathbb{L}^q} \langle  |v(s)|^{q-2} v(s), G(v(s))\,{\rm d}W(s)\rangle
\notag\\
& \quad + \frac{p(q-1)}{2} \int_0^t \|v(s)\|_{\mathbb{L}^q}^{p-2} \|G(v(s))\|^2_{R(\mathcal{H}; \mathbb{L}^q)}\, {\rm d}s.
\end{align}
Let us estimate separately the various terms appearing in \eqref{stimLqv}. By the integration by parts formula we get
\begin{equation*}
-\langle |v(s)|^{q-2} v(s), Av(s)\rangle
=-\||v(s)|^{\frac{q-2}{2}}\nabla v(s)\|^2_{L^2}
-(q-2)\int |v(s,x)|^{q-4}|\sum_j v_j(s,x)\nabla v_j(s,x)|^2 dx\le 0,
\end{equation*}
and by \eqref{Bq} 
\begin{equation*}
\langle |v(s)|^{q-2} v(s), B(v(s),v(s))\rangle=0.
\end{equation*}
By the Burkholder-Davis-Gundy inequality we get 
\begin{align*}
\mathbb{E}\sup_{0 \le t \le T}&\left|p \int_0^t \|v(s)\|^{p-q}_{\mathbb{L}^q} \langle  |v(s)|^{q-2} v(s), G(v(s))\,{\rm d}W(s)\rangle\right|^2
\\
&\le C_p \mathbb{E} \int_0^T \|v(s)\|^{2(p-q)}_{\mathbb{L}^q}   \|v(s)\|_{\mathbb{L}^q}^{2(q-1)} \| G(v(s))\|^2_{R(\mathcal{H}, \mathbb{L}^q)}\,{\rm d}s
\\
&= C_p \mathbb{E} \int_0^T   \|v(s)\|_{\mathbb{L}^q}^{2(p-1)} \| G(v(s))\|^2_{R(\mathcal{H}, \mathbb{L}^q)}\,{\rm d}s.
\end{align*}
Therefore, squaring both sides of \eqref{stimLqv} and then taking the expectation of the sup (in time) norm at first, then using Young inequality we get
\begin{align*}
\mathbb{E}\sup_{0\le t \le T}\|v(t)\|_{\mathbb{L}^q}^{2p} 
&\le \|v_0\|^{2p}_{\mathbb{L}^q} 
+ \frac{p^2(q-1)^2}{4} \mathbb{E} \sup_{0\le t \le T}\left|\int_0^t   \|v(s)\|_{\mathbb{L}^q}^{p-2} \| G(v(s))\|^2_{R(\mathcal{H}, \mathbb{L}^q)}\,{\rm d}s\right|^2
\\
&\quad + C_p \mathbb{E} \int_0^T   \|v(s)\|_{\mathbb{L}^q}^{2(p-1)} \| G(v(s))\|^2_{R(\mathcal{H}, \mathbb{L}^q)}\,{\rm d}s
\\
&\le \|v_0\|^{2p}_{\mathbb{L}^q} 
+ C^1_{p,T} \mathbb{E} \int_0^T  \| G(v(s))\|^{2p}_{R(\mathcal{H}, \mathbb{L}^q)}\,{\rm d}s
+ C^2_{p,T}\mathbb{E} \int_0^T  \|v(s)\|_{\mathbb{L}^q}^{2p}\, {\rm d}s
\\
& \le \|v_0\|^{2p}_{\mathbb{L}^q} 
+ C^1_{p,T} \mathbb{E} \int_0^T  \| G(v(s))\|^{2p}_{R(\mathcal{H}, \mathbb{L}^q)}\,{\rm d}s
+ C^2_{p,T} \int_0^T  \mathbb{E}\sup_{0\le s \le r}\|v(s)\|_{\mathbb{L}^q}^{2p}\, {\rm d}r.
\end{align*}
By Proposition \ref{MiR},  $v(t) \in\mathbb{L}^2$ for every $t \in \left[0,T\right]$;
then, by \textbf{(IG1)} and \textbf{(IG2)}, we get
\begin{equation*}
\mathbb{E} \int_0^T  \| G(v(s))\|^{2p}_{R(\mathcal{H}, \mathbb{L}^q)}\,{\rm d}s \le T(C_{g,q})^{2p},
\end{equation*} 
thus 
\begin{align*}
\mathbb{E}\sup_{0\le t \le T}\|v(t)\|_{\mathbb{L}^q}^{2p} 
& \le \|v_0\|^{2p}_{\mathbb{L}^q} 
+ C^1_{p,q,T} + C^2_{p,T} \int_0^T  \mathbb{E}\sup_{0\le s \le r}\|v(s)\|_{\mathbb{L}^q}^{2p}\, {\rm d}r.
\end{align*}
Using Gronwall lemma we obtain \eqref{Lqv}.

This proves the result for $p\ge q$. Therefore it holds also for smaller values, i.e. $1\le p<q$.
\end{proof}

\section{Existence of a unique solution to the vorticity equations \eqref{vort}}
\label{vor_sec_bad}

We aim at proving that there exists a martingale solution to \eqref{vort}, in the sense of the following definition.

\begin{definition}
\label{mar_sol}
A martingale solution to equation \eqref{vort} is a triple consisting of a filtered probability space 
$(\Omega, \mathcal{F}, \{\mathcal{F}_t\}_{t \in \left[0,T\right]}, \mathbb{P})$, 
an $\{\mathcal{F}_t\}$-adapted cylindrical Wiener process $W$ on $\mathcal{H}$ and 
an $\{\mathcal{F}_t\}$-adapted measurable process $\xi$ such that 
$\xi: \left[0,T\right] \times \Omega \rightarrow L^2$ with $\mathbb{P}$-a.a. paths 
\begin{equation*}
\xi(\cdot, \omega) \in C(\left[0,T\right];L^2),
\end{equation*}
and such that for all $z\in C^{\infty}_{\text{sol}}$ and $t \in \left[0,T\right]$
\begin{equation}
\label{weak_vor}
\langle \xi(t),z \rangle =\langle \xi_0,z\rangle  
+ \int_0^t \langle  \xi(s),\Delta z \rangle\, {\rm d}s
+\int_0^t \langle v(s)\xi(s) , \nabla z\rangle \, {\rm d}s+\langle\int_0^t \tilde G(v(s))\, {\rm d}W(s),z\rangle
\end{equation}
$\mathbb{P}$-a.s., where $v$ is the solution to \eqref{NSabs}.
\end{definition}
The regularity of the paths of this solution and the regularity of $v$ proved in Proposition \ref{Lqvpro} makes all the terms in \eqref{weak_vor} well defined. 
The well posedness of the stochastic term follows from \eqref{wpstoc}. As regard the well posedness of the non linear term, from \eqref{F3} and the Gagliardo-Nirenberg inequality we get that 
\begin{equation*}
|\langle v(s) \xi(s), \nabla z\rangle| \le 
\|v(s)\|_{\mathbb{L}^4} \|\xi(s)\|_{L^2}\|\nabla z\|_{L^4}
\le C
\|v(s)\|^{\frac 12}_{\mathbb{L}^2}\|\nabla v(s)\|^{\frac12}_{L^2} \|\xi(s)\|_{L^2}\|\nabla z\|_{L^4}
\end{equation*}
and the r.h.s. is bounded thanks to \eqref{energyv} and the regularity required for $\xi$.

In order to prove the existence of a martingale solution to problem \eqref{vort} we cannot use It\^o calculus in the spaces $L^2 \cap L^q$, $q\ge 2$, since the covariance of the noise is not regular enough. Following the idea of \cite{BrzFer} we introduce an approximation system by regularizing the covariance of the noise: we shall use the Hille-Yosida approximations. In this way we construct a sequence of approximating processes $\{\xi_n\}_n$ and $\{v_n\}_n$. In order to pass to the limit, as $n \rightarrow \infty$, we shall exploit the tightness of the sequence of their laws. This is obtained working pathwise with two auxiliary processes $\beta_n$ and $\zeta_n$ with $\xi_n= \beta_n + \zeta_n$.
\newline
Thus, we introduce the smoother problems which approximate \eqref{NS_0} and \eqref{vort}, then we prove the tightness of the sequence of the laws and finally we show the convergence. In this way we prove the existence of a martingale solution to \eqref{vort}. 

\subsection{The approximating equation}
\label{App_Sec}
Let us introduce the Hille-Yosida approximations
\begin{equation*}
R_n=n(nI+A)^{-1}, \qquad n=1,2,...
\end{equation*}
and let us define the approximation sequence
\begin{equation*}
 G_n= R_n G, \qquad n=1,2,... 
\end{equation*}
Every $R_n$ is a contraction operator in $H^{s,q}$ and it converges strongly to the identity operator, i.e. (see \cite[Section 1.3]{Pazy1983})
\begin{equation}
\|R_n\|_{L(H^{s,q},H^{s,q})} \le 1 \quad \text{and} \quad \lim_{n\rightarrow \infty} R_nh=h,\quad \forall \ h \in H^{s,q}.
\end{equation}
Moreover, each $R_n$ is a bounded operator from $H^{s,q}$ to $H^{s+t,q}$ for any $t \le 2$, but the operator norm is not uniformly bounded in $n$ for $t>0$ (for the details see \cite[Section 3.1]{BrzFer}).
From the above and \cite{Baxendale} 
\begin{equation}\label{Gn}
\| G_n(v)\|_{R(\mathcal{H};H^{1-g,q})} \le \| G(v)\|_{R(\mathcal{H};H^{1-g,q})}, \qquad \forall n
\end{equation}
and
\begin{equation}
\lim_{n \rightarrow \infty} \| G_n(v) -  G(v)\|_{R(\mathcal{H};H^{1-g,q})}=0.
\end{equation}
The operator $ G_n(v)$ is more regular than $G(v)$. Indeed, assuming \textbf{(IG1)} and \textbf{(IG2)} (or \textbf{(IG3)}), $ G_n(v)$ is a $\gamma$-radonifying operator in $H^{1,q}$, $q\ge 2$. In fact, for $g \in (0,1)$
\begin{align}
\label{Gn_reg}
\|G_n(v)\|_{R(\mathcal{H};H^{1,q})} 
&\le \|R_nJ^g\|_{\mathcal{L}(H^{1,q}, H^{1,q})}\|J^{-g} G (v)\|_{R(\mathcal{H};H^{1,q})}
\notag\\
&\le \|R_n\|_{\mathcal{L}(H^{1,q},H^{1+g,q})}\| G(v)\|_{R(\mathcal{H},H^{1-g,q})}.
\end{align}
For every $n \in \mathbb{N}$ we consider the approximating problem 
\begin{equation}
\label{NSabs_apx}
\begin{cases}
\displaystyle  {\rm d}v(t)+ \left[Av(t) +B(v(t),v(t))\right]\,{\rm d}t
       = G_n(v(t))\, {\rm d}W(t), \quad t \in \left[0,T\right]
 \\
\displaystyle v(0)=v_0
\end{cases}
\end{equation}

By taking the \textit{curl} on both sides of the first equation we obtain the approximating equation for the vorticity:
\begin{equation}
\label{vort_ap}
\begin{cases}
\displaystyle  {\rm d}\xi(t)+\left[A \xi(t)+v(t) \cdot \nabla \xi(t)\right]{\rm d}t
       = \tilde G_n(v(t))\, {\rm d}W(t), \qquad t \in \left[0,T\right]
\\ 
\xi= \nabla^{\perp}\cdot v \\
\displaystyle \xi(0,x)=\xi_0(x)
\end{cases}
\end{equation}
With the same abuse of notation used above, for every $n \in \mathbb{N}$, we write $\tilde G_n(v){\rm d}W(t)$ instead of curl$(G_n(v){\rm d}W(t))$, where 
$\tilde G_n:= \text{curl}G_n$.
This is the vorticity equation \eqref{vort} with a more regular noise. 

The next result provides the existence of a unique strong solution to system \eqref{vort_ap}, for any fixed $n \in \mathbb{N}$. We recall that by strong solution to \eqref{vort_ap} we mean an $\{\mathcal{F}_t\}$-adapted measurable process $\xi$ such that $\xi: \left[0,T\right] \times \Omega \rightarrow L^2$  with $\mathbb{P}$-a.s. paths 
$\xi(\cdot, \omega) \in C(\left[0,T\right];L^2)
$, that satisfies \eqref{weak_vor}, where the last term is replaced by $\langle\int_0^t \tilde G_n(v(s))\, {\rm d}W(s),z\rangle$. Here the stochastic basis $(\Omega, \mathcal{F}, \{\mathcal{F}_t\}_{t \in \left[0,T\right]}, \mathbb{P})$ is given in advance and it is not constructed as a part of the solution. The proof of Proposition \ref{prop3} is based on a more general result whose statement and proof are postponed to Appendix \ref{A}.
\begin{proposition}
\label{prop3}
Assume conditions \textbf{(IG1)}, \textbf{(IG4)} and \textbf{(IG5)}. Let $\xi_0\in L^2$ and $v_0 \in \mathbb{L}^2$. Then, for each $n\in \mathbb{N}$, there exists a unique strong solution $\xi_n$ to \eqref{vort_ap}. Moreover,  
\begin{equation*}
\xi_n \in L^p(\Omega;L^{\infty}(0,T;L^2)) \cap L^2(\Omega; L^2(0,T;W^{1,2})), \qquad 
\forall p>1
\end{equation*}
and there exists a constant $C_n$ such that
\begin{equation}
\label{xi_notuni}
\mathbb{E} \left[ \sup_{0\le t \le T}\|\xi_n(t)\|^p_{L^2}\right] + \mathbb{E} \left[\int_0^T \|\xi_n(t)\|^2_{W^{1,2}}\, {\rm d}t \right] \le C_n.
\end{equation}
\end{proposition}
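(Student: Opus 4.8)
The plan is to prove Proposition \ref{prop3} by reducing the stochastic vorticity equation \eqref{vort_ap} with the regularized noise $\tilde G_n$ to a form amenable to It\^o calculus in $L^2$, exploiting the fact that $G_n(v)$ is $\gamma$-radonifying in $H^{1,q}$ by \eqref{Gn_reg}, so that $\tilde G_n(v)=\mathrm{curl}\,G_n(v)$ takes values in $L^2\cap L^q$ and the covariance is genuinely trace class in $L^2$. The key observation is that, unlike the original problem, here the driving noise is regular enough to justify the energy estimates directly. First I would set up the existence of a strong solution: since $v$ is the (already constructed) unique strong solution to \eqref{NSabs_apx} given by Proposition \ref{MiR}, with the regularity \eqref{energyv}, the vorticity equation \eqref{vort_ap} becomes a \emph{linear} (in $\xi$) advection-diffusion equation with a given velocity field $v$ and a given regular additive forcing $\tilde G_n(v)\,\mathrm{d}W$. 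This linearity is the crucial simplification, and I would appeal to the general result stated in Appendix \ref{A} (as the excerpt indicates) for existence and uniqueness.

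The main work is the a priori estimate \eqref{xi_notuni}. I would apply the It\^o formula to $\|\xi(t)\|^p_{L^2}$ (first for $p\ge 2$, then bootstrap to all $p>1$), exactly paralleling the computation in the proof of Proposition \ref{Lqvpro} but in the $L^2$ setting. The drift terms are handled as follows: the viscous term gives $-\langle A\xi,\xi\rangle=-\|\nabla\xi\|^2_{L^2}\le 0$, which produces the good dissipation term $\mathbb{E}\int_0^T\|\xi\|^2_{W^{1,2}}\,\mathrm{d}t$ on the left; the nonlinear transport term vanishes by the antisymmetry \eqref{Fq0}, that is $\langle F(v,\xi),\xi|\xi|^{p-2}\rangle=0$, which requires $v\in\mathbb{L}^\infty$. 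Here is where I would invoke the $\mathbb{L}^q$-regularity of $v$ from Proposition \ref{Lqvpro} together with the Sobolev embedding \eqref{Sobolev} $W^{1,q}\hookrightarrow L^\infty$ for $q>2$; since $R_n$ smooths, one expects enough regularity on the approximate level, though for the estimate \eqref{xi_notuni} one only needs $\langle F(v,\xi),\xi\rangle=0$ from \eqref{F2}, valid for $v\in H^{1,2}$. The stochastic term is controlled by the Burkholder-Davis-Gundy inequality \eqref{BDG} together with assumption \textbf{(IG1)} (equivalently \textbf{(I$\tilde G$1)}), bounding $\|\tilde G_n(v)\|^2_{L_{HS}(\mathcal{H};L^2)}\le\|G_n(v)\|^2_{L_{HS}(\mathcal{H};H^{1,2})}\le C_n$ uniformly in time; note this bound depends on $n$ through \eqref{Gn_reg}, which is precisely why $C_n$ in \eqref{xi_notuni} is allowed to blow up with $n$. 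Collecting terms and applying Young's inequality and Gronwall's lemma then yields the bound.

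The continuity of paths, $\xi(\cdot,\omega)\in C([0,T];L^2)$, follows from the progressive measurability and $L^2$-valued version provided by the stochastic integration theory, combined with the dissipative structure; I would obtain it from the mild/variational formulation together with the BDG inequality \eqref{BDG} with $m$ large, which gives $\mathbb{E}\sup_{t}\|\xi(t)\|^p_{L^2}<\infty$ and hence the weak continuity upgraded to strong continuity via the energy equality.

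The step I expect to be the main obstacle is \emph{justifying} the It\^o formula and the cancellation of the nonlinear term at the level of genuine (not merely Galerkin) solutions. The cancellation $\langle F(v,\xi),\xi\rangle=0$ is formally clear from \eqref{F2}, but making the duality pairings rigorous requires that all terms lie in dual spaces where the pairing is defined: one must check that $v\xi\in W^{-1,2}$ via the bound \eqref{F3} and the Gagliardo-Nirenberg estimate on $\|v\|_{\mathbb{L}^4}$ already displayed after Definition \ref{mar_sol}, and that $\xi\in W^{1,2}$ almost everywhere in time so that the pairing $\langle F(v,\xi),\xi\rangle$ is legitimate. Since this delicate point is exactly what the general Appendix \ref{A} result is designed to handle, I would structure the proof so that the core existence, uniqueness, and regularity claims are deduced as a special case of that theorem, with the above energy computation serving to verify its hypotheses and to exhibit the explicit (though $n$-dependent) constant $C_n$.
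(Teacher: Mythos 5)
There is a genuine gap, and it sits exactly at the point you yourself flag as the ``main obstacle.'' Your plan is to treat \eqref{vort_ap} as a linear advection--diffusion SPDE with the velocity $v_n$ given by Proposition \ref{MiR}, and then to defer the delicate existence/It\^o-formula step to ``the general result stated in Appendix \ref{A}.'' But Theorem \ref{existence} in Appendix \ref{A} is not a result about linear advection--diffusion equations for the vorticity: it is an existence theorem for the \emph{velocity} Navier--Stokes equations \eqref{NS_0} under \textbf{(G1)}--\textbf{(G2)}, proved by a smoothed Faedo--Galerkin scheme for $v$. It cannot be invoked ``as a special case'' to produce $\xi_n$, nor to justify the It\^o formula for $\|\xi_n\|_{L^2}^p$. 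The paper's actual proof is different and much shorter: thanks to \eqref{Gn_reg} the regularized covariance $G_n$ satisfies \textbf{(G1)} (and \textbf{(IG4)} gives \textbf{(G2)}), so Theorem \ref{existence} applies to the velocity equation \eqref{NSabs_apx} and yields $v_n$ with the regularity \eqref{blu}, i.e.\ $v_n\in L^p(\Omega;L^{\infty}(0,T;H^{1,2}))\cap L^2(\Omega;L^2(0,T;H^{2,2}))$; pathwise uniqueness comes from \textbf{(IG5)}; then one simply sets $\xi_n=\nabla^{\perp}\cdot v_n$, and \eqref{xi_notuni} follows immediately from \eqref{blu} and Lemma \ref{lemma_curl}/\eqref{equivnorm}, with no It\^o calculus on the vorticity equation at all. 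In your scheme, by contrast, the construction of $\xi_n$ (Galerkin approximation of the linear SPDE, uniform energy bounds, passage to the limit, path continuity) is precisely the missing work, since the theorem you lean on does not cover it.

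There is also a hypothesis mismatch: your first step invokes Proposition \ref{MiR}, which requires \textbf{(IG3)}, but Proposition \ref{prop3} assumes only \textbf{(IG1)}, \textbf{(IG4)}, \textbf{(IG5)}; the appearance of \textbf{(IG4)} in the statement is exactly because the paper's route goes through condition \textbf{(G2)} of Theorem \ref{existence}, not through Proposition \ref{MiR}. Finally, even if you completed the Galerkin argument for the linear equation (the energy computation itself is sound: the transport term cancels exactly by \eqref{F2}, and $\tilde G_n(v)$ is trace class in $L^2$ with an $n$-dependent bound, so BDG and Gronwall do give \eqref{xi_notuni}), you would obtain \emph{a} solution of the advection--diffusion equation but not the identification $\xi_n=\nabla^{\perp}\cdot v_n$ required by the system \eqref{vort_ap}; with only the regularity \eqref{energyv} for $v_n$, the process $\nabla^{\perp}\cdot v_n$ lies merely in $L^2(0,T;L^2)$, outside the class where your uniqueness argument applies, so the identification cannot be recovered. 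This identification is used later in the paper (e.g.\ in Proposition \ref{beta_n_prop}, where $\|\nabla v_n\|_{L^2}$ is controlled by $\|\xi_n\|_{L^2}$), so it is not an optional refinement.
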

\begin{proof}
Thanks to \eqref{Gn_reg}, the operator $G_n$ is regular enough to apply Theorem \ref{existence} 
and infer, for any $n \in \mathbb{N}$, the existence of a martingale solution (in the sense of Definition \ref{mart_sol}) to \eqref{NSabs_apx}. Moreover, under assumption \textbf{(IG5)}, the solution is pathwise unique. 
Thus \eqref{NSabs_apx} admits a unique strong solution. As a consequence we infer that, for any $n \in \mathbb{N}$, there exists a strong solution 
 of the approximating problem \eqref{vort_ap}. 
 This is obtained by taking the \textit{curl} of the solution to equation \eqref{NSabs_apx}. In particular, from \eqref{blu} we infer \eqref{xi_notuni}.
 \end{proof}

\subsection{Tightness of the law of $\{v_n\}_n$}
\label{tight_sec_v}
In this Section we provide the tightness of the sequence of the laws of $\{v_n\}_n$ in proper spaces. The crucial point is to obtain uniform estimates in $n \in \mathbb{N}$.

\begin{proposition}
\label{v_n_prop}
Assume \textbf{(IG1)}, \textbf{(IG3)} and \textbf{(IG5)}.
If $v_0 \in \mathbb{L}^2$, then there exists a unique strong solution to \eqref{NSabs_apx}
 for each $n \in \mathbb{N}$.
\newline
Moreover, 
\begin{equation}
\label{energyv_n}
\sup_{n \in \mathbb{N}}\mathbb{E}\left[ \sup_{0\le t \le T}\|v_n(t)\|^2_{\mathbb{L}^2}+ \int_0^T \|\nabla v_n(t)\|^2_{L^2}\, {\rm d}t\right]<\infty.
\end{equation}
In particular, for any $\varepsilon>0$ there exist positive constants $\alpha_i$, $i=1,2,3$ such that
\begin{equation}
\label{rem_vel_eq}
\sup_n \mathbb{P} \left(\|v_n\|_{L^{\infty}(0,T;\mathbb{L}^2)}>\alpha_1 \right) \le \varepsilon,
\end{equation}
\begin{equation}
\label{rem_vel_eq3}
\sup_n \mathbb{P} \left(\|v_n\|_{L^2(0,T;H^{1,2})}>\alpha_2 \right) \le \varepsilon.
\end{equation}
\begin{equation}
\label{Rem_vel_eqL4}
\sup_n \mathbb{P} \left(\|v_n\|_{L^4(0,T;\mathbb{L}^4)}>\alpha_3 \right) \le \varepsilon.
\end{equation}
Moreover, there exists $\mu>0$ such that for any $\varepsilon >0$ there exists a positive constant $\alpha_4$ such that
\begin{equation}
\label{9a}
\sup_n \mathbb{P} \left(\|v_n\|_{C^{\mu}(\left[0,T\right];H^{-1,2})}>\alpha_4 \right) \le \varepsilon.
\end{equation}
\end{proposition}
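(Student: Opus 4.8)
The plan is to deduce every assertion from the $\mathbb{L}^2$-energy balance, exactly as for the limiting equation, the only point to watch being that all constants come out uniform in $n$. First I would observe that, since $R_n$ is a contraction on each $H^{s,2}$ and converges strongly to the identity, the regularized covariance $G_n=R_nG$ inherits \textbf{(IG1)}, \textbf{(IG3)} and \textbf{(IG5)} from $G$ with the \emph{same} constants: for \textbf{(IG1)}, $\|G_n(v)\|_{L_{HS}(\mathcal{H};H^{1-g,2})}\le\|G(v)\|_{L_{HS}(\mathcal{H};H^{1-g,2})}\le C_{g,2}$, and for \textbf{(IG5)}, $\|G_n(v_1)-G_n(v_2)\|_{L_{HS}(\mathcal{H};\mathbb{L}^2)}\le L_g\|v_1-v_2\|_{\mathbb{L}^2}$. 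Hence Proposition \ref{MiR} applies verbatim to \eqref{NSabs_apx} for each fixed $n$ and yields a unique strong solution $v_n$.

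Next I would derive \eqref{energyv_n} by applying the It\^o formula to $\|v_n(t)\|^2_{\mathbb{L}^2}$. Here $\langle Av_n,v_n\rangle=\|\nabla v_n\|^2_{L^2}$, the nonlinear term vanishes by \eqref{B00}, the It\^o correction is $\|G_n(v_n)\|^2_{L_{HS}(\mathcal{H};\mathbb{L}^2)}$, which by the embedding $H^{1-g,2}\subset\mathbb{L}^2$ and the contraction bound above is dominated by $C(C_{g,2})^2$ uniformly in $n$, and the remaining term is a martingale handled by the Burkholder--Davis--Gundy inequality. Taking the supremum in time and then the expectation, a standard absorption argument produces \eqref{energyv_n} with a constant independent of $n$. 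The tail estimates \eqref{rem_vel_eq} and \eqref{rem_vel_eq3} then follow at once by Chebyshev's inequality, choosing $\alpha_1,\alpha_2$ large.

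For the $L^4(0,T;\mathbb{L}^4)$ bound \eqref{Rem_vel_eqL4} I would avoid any higher moment estimate and instead use the pathwise Ladyzhenskaya/Gagliardo--Nirenberg inequality $\|u\|^4_{\mathbb{L}^4}\le C\|u\|^2_{\mathbb{L}^2}\|\nabla u\|^2_{L^2}$ in $\mathbb{R}^2$, which gives $\|v_n\|^4_{L^4(0,T;\mathbb{L}^4)}\le C\|v_n\|^2_{L^\infty(0,T;\mathbb{L}^2)}\,\|v_n\|^2_{L^2(0,T;H^{1,2})}$ pathwise; intersecting the two events from \eqref{rem_vel_eq} and \eqref{rem_vel_eq3} then bounds $\mathbb{P}(\|v_n\|_{L^4(0,T;\mathbb{L}^4)}>\alpha_3)$ by $2\varepsilon$, with $\alpha_3$ of order $(\alpha_1\alpha_2)^{1/2}$.

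Finally, for the time-H\"older estimate \eqref{9a} I would split $v_n$ into its drift and its stochastic part. For the drift, $A:H^{1,2}\to H^{-1,2}$ is bounded and Lemma \ref{lemmaB}(iv) gives $\|B(v_n,v_n)\|_{H^{-1,2}}\le C\|v_n\|^2_{\mathbb{L}^4}$, so integrating over $[s,t]$ and applying H\"older in time yields a pathwise $\tfrac12$-H\"older bound in $H^{-1,2}$ with random constant controlled by $\|v_n\|_{L^2(0,T;H^{1,2})}+\|v_n\|^2_{L^4(0,T;\mathbb{L}^4)}$, hence bounded in probability by the previous steps. For the stochastic part $\int_s^t G_n(v_n)\,{\rm d}W$, the Burkholder--Davis--Gundy inequality \eqref{BDG} together with $H^{1-g,2}\subset H^{-1,2}$ and \textbf{(IG1)} gives $\mathbb{E}\|\int_s^t G_n(v_n)\,{\rm d}W\|^m_{H^{-1,2}}\le C_m|t-s|^{m/2}$ with $C_m$ independent of $n$; the Kolmogorov continuity theorem then provides, for any $\mu<\tfrac12$, a uniform-in-$n$ bound on $\mathbb{E}\|\cdot\|^m_{C^\mu([0,T];H^{-1,2})}$, and Chebyshev converts this into \eqref{9a}. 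The main obstacle is precisely this last point: securing time-H\"older regularity of the stochastic integral in $H^{-1,2}$ and checking that the Kolmogorov constant stays uniform in $n$, which rests on the contraction property of $R_n$ keeping the $L_{HS}(\mathcal{H};H^{1-g,2})$-norm of $G_n$ bounded by $C_{g,2}$.
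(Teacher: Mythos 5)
Your proposal is correct, and for the existence/uniqueness statement and the first three estimates it is essentially the paper's argument: you observe that the contraction property of $R_n$ makes $G_n=R_nG$ satisfy \textbf{(IG1)}, \textbf{(IG3)}, \textbf{(IG5)} with constants independent of $n$ (this is exactly what \eqref{Gn} encodes), so Proposition \ref{MiR} and the energy estimate of Section \ref{v_sec} apply uniformly in $n$; Chebyshev then gives \eqref{rem_vel_eq}--\eqref{rem_vel_eq3}, and the pathwise Gagliardo--Nirenberg (Ladyzhenskaya) inequality combined with a union bound over the two events gives \eqref{Rem_vel_eqL4}, just as in the paper. (Two small points you gloss over, both harmless: the inheritance of \textbf{(IG3)} uses $G_n(v)^*\varphi=G(v)^*(R_n\varphi)$ and the self-adjointness of $R_n$; and the It\^o computation for $\|v_n\|^2_{\mathbb{L}^2}$ is rigorously justified through Galerkin approximations, which is implicit in the cited existence results.)

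The genuine difference is in the H\"older estimate \eqref{9a}. The paper disposes of it in one line by invoking Proposition 3.5 of \cite{BrzFer}, the only work being to check its hypotheses (in particular that the embedding $H^{1-g,2}\subset H^{-g,4}$ yields the assumption (G2) required there). You instead give a self-contained proof: split $v_n$ into the Bochner integral of the drift and the stochastic integral; bound the drift increments pathwise in $H^{-1,2}$ by H\"older in time, using the boundedness of $A:H^{1,2}\to H^{-1,2}$ and $\|B(u,u)\|_{H^{-1,2}}\le C\|u\|^2_{\mathbb{L}^4}$, with random constant controlled in probability by \eqref{rem_vel_eq3} and \eqref{Rem_vel_eqL4}; and handle the stochastic integral by \eqref{BDG}, the embedding $H^{1-g,2}\subset H^{-1,2}$, and Kolmogorov's continuity criterion, with all constants uniform in $n$ thanks to the uniform bound on $\|G_n(v)\|_{L_{HS}(\mathcal{H};H^{1-g,2})}$. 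This argument is sound (the modification produced by Kolmogorov's theorem is indistinguishable from the stochastic integral, which already has continuous paths in $H^{-1,2}$, so the bound transfers to the process itself), and it yields \eqref{9a} for any $\mu<\tfrac12$ upon taking the moment exponent $m$ large. What your route buys is independence from the external reference; what the paper's route buys is brevity, at the price of verifying the hypotheses of a result proved elsewhere by essentially the same drift/noise splitting.
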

\begin{proof}
The proof of \eqref{energyv_n} immediately follows from the results of Section \ref{v_sec}. Indeed, by \eqref{Gn} we get a uniform estimate on $G_n(v)$.
From this we infer the estimates in probability  \eqref{rem_vel_eq} and \eqref{rem_vel_eq3}, which in 
turn imply \eqref{Rem_vel_eqL4} thanks to the Gagliardo-Nirenberg inequality 
$\|v_n(s)\|_{\mathbb L^4}\le C \|v_n(s)\|_{\mathbb L^2}^{1/2} \|\nabla v_n(s)\|^{1/2}_{L^2}$.

Finally, estimate \eqref{9a} comes from Proposition 3.5 of \cite{BrzFer}. Indeed, all the assumptions
of that Proposition are fulfilled; in particular  
the continuous embedding $H^{1-g,2}\subset H^{-g,4}$ implies assumption (G2) 
of Proposition 3.5 in \cite{BrzFer}.
\end{proof}
In the same way, from Proposition \ref{Lqvpro} we get 
\begin{proposition}
\label{v_n_prop2}
Let  $q > 2$ and assume \textbf{(IG1)}, \textbf{(IG2)}, \textbf{(IG3)} and \textbf{(IG5)}.
Let $v_0 \in \mathbb{L}^2 \cap \mathbb{L}^q$. Let $\{v_n\}$ be the solution to \eqref{NSabs_apx} as given in Proposition \ref{v_n_prop}. Then, in addition to \eqref{energyv_n}-\eqref{9a}, for any $1< p< \infty$ it holds, 
\begin{equation}
\label{supnv}
\sup_{n \in \mathbb{N}}\ \mathbb{E}\sup_{0 \le t \le T} \|v_n(t)\|^p_{\mathbb{L}^q} < \infty.
\end{equation}
In particular, for any $\varepsilon>0$ there exists a positive constant $\alpha_4$, such that
\begin{equation}
\label{rem_vel_eq2}
\sup_n \mathbb{P} \left(\|v_n\|_{L^{\infty}(0,T;\mathbb{L}^q)}>\alpha_4 \right) \le \varepsilon.
\end{equation}
\end{proposition}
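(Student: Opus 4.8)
The plan is to deduce Proposition \ref{v_n_prop2} almost immediately from the corresponding single-equation estimate already established in Proposition \ref{Lqvpro}, by tracking the uniformity in $n$ through the Hille--Yosida regularization. First I would observe that, for each fixed $n$, the approximating problem \eqref{NSabs_apx} is exactly the Navier--Stokes system \eqref{NSabs} with the covariance operator $G$ replaced by $G_n=R_nG$. Hence existence and uniqueness of the strong solution $v_n$, together with the energy bound \eqref{energyv} and the $\mathbb{L}^q$-bound \eqref{Lqv}, follow by applying Proposition \ref{Lqvpro} to $G_n$ in place of $G$, provided one checks that $G_n$ satisfies the same hypotheses \textbf{(IG1)}, \textbf{(IG2)}, \textbf{(IG3)}, \textbf{(IG5)} that $G$ does; this is where estimates \eqref{energyv_n}--\eqref{9a} of the previous proposition come from, and \eqref{supnv} will be the new $\mathbb{L}^q$-ingredient.

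The key point, and the only real content, is uniformity of the constant in $n$. I would re-examine the proof of Proposition \ref{Lqvpro}: the final bound \eqref{Lqv} is produced by Gronwall's lemma, and the constant depends on $q$, $T$, $\|v_0\|_{\mathbb{L}^q}$, and crucially on $C_{g,q}$, which enters through the uniform estimate $\mathbb{E}\int_0^T\|G(v(s))\|^{2p}_{R(\mathcal{H};\mathbb{L}^q)}\,{\rm d}s\le T(C_{g,q})^{2p}$. For the approximating problem, the analogous quantity is $\mathbb{E}\int_0^T\|G_n(v_n(s))\|^{2p}_{R(\mathcal{H};\mathbb{L}^q)}\,{\rm d}s$. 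By the contraction property \eqref{Gn}, namely $\|G_n(v)\|_{R(\mathcal{H};H^{1-g,q})}\le\|G(v)\|_{R(\mathcal{H};H^{1-g,q})}$, together with \textbf{(IG2)}, this is bounded by $T(C_{g,q})^{2p}$ uniformly in $n$. Consequently the whole Gronwall argument runs through with a constant that does not depend on $n$, yielding \eqref{supnv}.

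Finally, I would derive the estimate in probability \eqref{rem_vel_eq2} from \eqref{supnv} by a routine Chebyshev (Markov) inequality: for any $p>1$,
\begin{equation*}
\sup_n\mathbb{P}\left(\|v_n\|_{L^\infty(0,T;\mathbb{L}^q)}>\alpha_4\right)
\le \frac{1}{\alpha_4^{\,p}}\sup_n\mathbb{E}\sup_{0\le t\le T}\|v_n(t)\|^p_{\mathbb{L}^q},
\end{equation*}
so choosing $\alpha_4$ large enough makes the right-hand side smaller than $\varepsilon$.

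I do not expect a genuine obstacle here; the proposition is essentially a statement that the argument of Proposition \ref{Lqvpro} is insensitive to the regularization, and the single inequality \eqref{Gn} does all the work in guaranteeing this. The main thing to be careful about is to verify that $G_n$ inherits each of the assumptions \textbf{(IG1)}, \textbf{(IG2)}, \textbf{(IG3)}, \textbf{(IG5)} with the \emph{same} constants (which again follows from $\|R_n\|_{\mathcal{L}(H^{s,q},H^{s,q})}\le1$ and the strong convergence $R_n\to I$), so that all the constants in the resulting bounds are legitimately $n$-independent.
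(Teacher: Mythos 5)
Your proposal is correct and follows essentially the same route as the paper: the authors obtain Proposition \ref{v_n_prop2} precisely by re-running Proposition \ref{Lqvpro} for the approximating systems with $G_n=R_nG$, using the contraction bound \eqref{Gn} to make the constant (which enters only through $C_{g,q}$ in the Gronwall argument) independent of $n$, and then passing to the estimate in probability by Chebyshev's inequality. Your extra care in checking that $G_n$ inherits \textbf{(IG1)}, \textbf{(IG2)}, \textbf{(IG3)}, \textbf{(IG5)} with the same constants, via $\|R_n\|_{\mathcal{L}(H^{s,q},H^{s,q})}\le 1$, is exactly the point the paper leaves implicit.
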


\subsection{Tightness of the law of $\xi_n$}
\label{tight_sec_xi}
The present Section is devoted to the proof of the tightness of the sequence of the laws of $\{\xi_n\}_n$. Let us start by noticing that estimate \eqref{xi_notuni} is not uniform with respect to $n$: \eqref{Gn_reg} shows that the 
$\gamma$-radonifying norms of the $G_n(v)$ (and thus of the $\tilde G_n(v)$) are not uniformly bounded in $n$. 
Therefore, from \eqref{xi_notuni} we cannot obtain the tightness of the sequence of the laws of the 
$\xi_n$'s. 
In order to get uniform estimates in $n$ for the sequence $\{\xi_n\}_n$, 
we follow the idea of \cite{BrzFer}, splitting our problem in two subproblems in the unknowns $\zeta_n$ and $\beta_n$ with $\xi_n=\zeta_n+ \beta_n$.

We define the process $\zeta_n$ as the solution of the Ornstein-Uhlenbeck equation
\begin{equation}
\begin{cases}
{\rm d} \zeta_n(t)+ A\zeta_n(t)\ { \rm d}t= \tilde G_n(v_n(t))\,{\rm d}W(t), \qquad t \in \left[0,T\right]
\\
\zeta_n(0)=0.
\end{cases}
\end{equation}
Therefore, the process $\beta_n= \xi_n -\zeta_n$ solves 
\begin{equation}
\label{beta_n_eq}
\begin{cases}
\frac{\rm d \beta_n}{{\rm d t}}(t)+ A\beta_n(t) +v_n(t) \cdot \nabla \xi_n(t)= 0, \qquad t \in \left[0,T\right]
\\
\beta_n(0)=\xi_0.
\end{cases}
\end{equation}
We shall first analyze the Ornstein-Uhlenbeck processes; the solution 
 $\zeta_n$ is given by
\begin{equation}
\zeta_n(t)=\int_0^t S(t-s)\tilde G_n(v_n(s))\, {\rm d}W(s).
\end{equation}
With a slight modification of the proofs of \cite[Lemma 3.2]{BrzFer} and \cite[Lemma 3.3]{BrzFer} respectively, we have the following regularity results. Recall that assumptions \textbf{(IG1)}-\textbf{(IG2)} reads as \textbf{(I$\tilde G$1)}-\textbf{(I$\tilde G$2)} when we deal with the equation for the vorticity.

\begin{lemma}
\label{3.2}
Let $q \ge 2$. Assume conditions \textbf{(IG1)} and \textbf{(IG2)}. Take any $g_0 \in \left[g,1\right)$ and put $\varepsilon=g_0-g \ge 0$. Then, for any integer $m \ge 2$ there exists a constant $C$ independent of $n$ (but depending on $m$, $T$, $q$, $g_0$ and $\tilde C_{g,q}$) such that 
\begin{equation*}
\mathbb{E}\|\zeta_n\|^m_{L^m(0,T;W^{\varepsilon,q})}\le C.
\end{equation*}
In particular, $\zeta_n \in L^m(0,T;W^{\varepsilon,q})$ $\mathbb{P}$-a.s.
\end{lemma}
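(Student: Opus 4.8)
The plan is to estimate the stochastic convolution $\zeta_n(t)=\int_0^t S(t-s)\tilde G_n(v_n(s))\,{\rm d}W(s)$ in the space $L^m(0,T;W^{\varepsilon,q})$ directly, exploiting the smoothing of the analytic semigroup $S(t)=e^{-tA}$ to gain the extra regularity $\varepsilon=g_0-g$ while keeping the bound uniform in $n$. The key is to combine the maximal-type / factorization estimates for stochastic convolutions in $\gamma$-radonifying norms (the theory behind the Burkholder--Davis--Gundy inequality \eqref{BDG}) with the bound \eqref{Gn}, which controls $\|\tilde G_n(v)\|_{R(\mathcal H;W^{-g,q})}$ uniformly in $n$ by $\|\tilde G(v)\|_{R(\mathcal H;W^{-g,q})}\le C_{g,q}$ (in the form stated in \textbf{(I$\tilde G$2)}). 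Since \eqref{Gn} is the only place the approximation enters, the $n$-independence of the final constant will follow automatically.

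First I would set up the factorization method. Writing $\zeta_n$ via the stochastic Fubini/factorization identity, one introduces the auxiliary process $Y_n(s)=\int_0^s (s-r)^{-\alpha}S(s-r)\tilde G_n(v_n(r))\,{\rm d}W(r)$ for a suitable $\alpha\in(0,1)$, so that $\zeta_n(t)=c_\alpha\int_0^t (t-s)^{\alpha-1}S(t-s)Y_n(s)\,{\rm d}s$. The deterministic convolution operator maps $L^m(0,T;W^{-g,q})$ into $L^m(0,T;W^{\varepsilon,q})$ provided one chooses $\alpha$ large enough to absorb the $\varepsilon$-gain of the fractional power $J^{g_0}=J^{\varepsilon}J^{g}$ against the singularity $(t-s)^{\alpha-1}$ and the semigroup smoothing bound $\|J^{\varepsilon}S(t)\|_{\mathcal L(L^q)}\le C t^{-\varepsilon/2}$, while keeping $\alpha<1/2$ so that the stochastic integral defining $Y_n$ is well posed. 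Then I would estimate $\mathbb{E}\|Y_n(s)\|_{W^{-g,q}}^m$ by the Banach-space BDG inequality \eqref{BDG}, bounding the $R(\mathcal H;W^{-g,q})$-norm of the integrand $(s-r)^{-\alpha}S(s-r)\tilde G_n(v_n(r))$ using contractivity of $S$ on $W^{-g,q}$ together with \eqref{Gn}; the resulting integral $\int_0^s (s-r)^{-2\alpha}\,{\rm d}r$ converges exactly when $\alpha<1/2$.

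The two constraints on $\alpha$, namely $\varepsilon/2<\alpha<1/2$ coming from the smoothing side and the integrability side respectively, are compatible precisely because $g_0<1$ forces $\varepsilon=g_0-g<1-g\le 1$, so $\varepsilon/2<1/2$; this compatibility is exactly why the statement restricts to $g_0\in[g,1)$. Once $\mathbb{E}\|Y_n\|_{L^m(0,T;W^{-g,q})}^m\le C$ uniformly in $n$ is in hand, applying Young's inequality for the deterministic convolution and the boundedness of $J^{g_0}$ between the relevant Sobolev scales yields $\mathbb{E}\|\zeta_n\|_{L^m(0,T;W^{\varepsilon,q})}^m\le C$ with $C$ independent of $n$; the almost-sure membership in $L^m(0,T;W^{\varepsilon,q})$ is then immediate.

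The main obstacle I anticipate is purely bookkeeping rather than conceptual: correctly tracking the fractional powers $J^\varepsilon$ and $J^g$ through the factorization so that the covariance is measured in $R(\mathcal H;W^{-g,q})$ (the space where \eqref{Gn} and \textbf{(I$\tilde G$2)} give the uniform bound) while the output norm is $W^{\varepsilon,q}=W^{g_0-g,q}$, and verifying that the exponent arithmetic indeed leaves an admissible window for $\alpha$. Because the paper already reduces everything for the vorticity noise to the velocity noise via Remark \ref{rem4} and the estimate \eqref{Gn}, I expect no genuine difficulty from the irregularity of $\tilde G$ itself; all the work is in the semigroup-smoothing estimate, and indeed the statement flags that this is a slight modification of \cite[Lemma 3.2]{BrzFer}, so the argument is a transcription of that factorization estimate with $G_n$ replaced by $\tilde G_n$ and the regularity indices shifted down by one order.
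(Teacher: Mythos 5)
Your overall strategy can be made to work, but it contains a concrete exponent error, and it is also a more roundabout route than the one the paper (via \cite[Lemma 3.2]{BrzFer}) actually takes. The error: since you estimate $Y_n$ only in $W^{-g,q}$ (using contractivity of $S$ there), the deterministic convolution $\zeta_n(t)=c_\alpha\int_0^t(t-s)^{\alpha-1}S(t-s)Y_n(s)\,{\rm d}s$ must lift the full $g_0=\varepsilon+g$ derivatives, not just $\varepsilon$: the relevant smoothing bound is $\|J^{g_0}S(t-s)\|_{\mathcal L(L^q)}\le C(t-s)^{-g_0/2}$, so the convolution kernel is $(t-s)^{\alpha-1-g_0/2}$ and its integrability forces $\alpha>g_0/2$, not $\alpha>\varepsilon/2$ as you state. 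Since the paper assumes $g\in(0,1)$, the interval $(\varepsilon/2,\,g_0/2]$ is a nonempty part of your stated window, and for any $\alpha$ there the integral $\int_0^t(t-s)^{\alpha-1-g_0/2}\,{\rm d}s$ diverges and the argument fails. The correct window $(g_0/2,1/2)$ is nonempty precisely because $g_0<1$, so your proof is repairable by a one-line correction; but the exponent arithmetic that you yourself flagged as the crux is, as written, wrong.

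Beyond that, the factorization detour is unnecessary for this lemma, and this is where your route genuinely differs from the paper's. Because only an $L^m(0,T;W^{\varepsilon,q})$ bound is claimed (no supremum in time, no path regularity), one can apply the Burkholder inequality \eqref{BDG} directly to $\zeta_n(t)$ for each fixed $t$, combining the ideal property of $\gamma$-radonifying norms with the semigroup smoothing in one stroke: $\|S(t-s)\tilde G_n(v_n(s))\|_{R(\mathcal H;W^{\varepsilon,q})}\le \|J^{g_0}S(t-s)\|_{\mathcal L(L^q)}\,\|\tilde G_n(v_n(s))\|_{R(\mathcal H;W^{-g,q})}\le C(t-s)^{-g_0/2}\,C_{g,q}$, the last bound being uniform in $n$ by \eqref{Gn}. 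Since $g_0<1$, $\int_0^t(t-s)^{-g_0}\,{\rm d}s\le T^{1-g_0}/(1-g_0)$, hence $\mathbb{E}\|\zeta_n(t)\|^m_{W^{\varepsilon,q}}\le C$ uniformly in $t$ and $n$, and integrating over $[0,T]$ finishes the proof. Factorization is what is needed for Lemma \ref{3.3} (the H\"older-in-time estimate), not for Lemma \ref{3.2}; your closing remark conflates the two, since \cite[Lemma 3.2]{BrzFer} is itself the direct pointwise-in-time estimate. Reserving factorization for Lemma \ref{3.3} and doing Lemma \ref{3.2} directly is both shorter and avoids exactly the parameter bookkeeping where your slip occurred.
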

\begin{lemma}
\label{3.3}
Let $q \ge 2$, assume \textbf{(IG1)} and let 
\begin{equation*}
0 \le \beta < \frac{1-g}{2}.
\end{equation*}
Then for any $p \ge 2$ and $\delta \ge 0$ such that 
\begin{equation*}
\beta + \frac{\delta}{2}+ \frac 1p < \frac{1-g}{2}
\end{equation*}
there exists a modification $\tilde \zeta_n$ of $\zeta_n$ such that 
\begin{equation}
\mathbb{E}\|\tilde \zeta_n\|^p_{C^{\beta}(\left[0,T\right];W^{\delta,q})} \le \tilde C
\end{equation}
for some constant $\tilde C$ independent of $n$ (but depending on $T, \ \beta, \delta, \ p$ and $q$).
\end{lemma}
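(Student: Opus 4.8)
The plan is to use the factorization method. Fix $\alpha\in(0,1)$ to be chosen later and write the stochastic convolution as $\zeta_n=c_\alpha\, R_\alpha Y_n$, where $c_\alpha=\frac{\sin\pi\alpha}{\pi}$ and
\begin{equation*}
Y_n(s)=\int_0^s (s-r)^{-\alpha}S(s-r)\tilde G_n(v_n(r))\,{\rm d}W(r),
\qquad
(R_\alpha f)(t)=\int_0^t (t-s)^{\alpha-1}S(t-s)f(s)\,{\rm d}s .
\end{equation*}
This identity follows from the semigroup property and the Beta-integral $\int_s^t (t-r)^{\alpha-1}(r-s)^{-\alpha}\,{\rm d}r=\frac{\pi}{\sin\pi\alpha}$, after a stochastic Fubini argument. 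The key structural point is that all of the $n$-dependence is carried by $\tilde G_n(v_n)$, whose $\gamma$-radonifying norm into $W^{-g,q}$ is bounded by $C_{g,q}$ \emph{uniformly in} $n$, since $R_n$ is a contraction commuting with $J^{-g}$ and with $\text{curl}$ (see \eqref{Gn} and Remark \ref{rem4}).

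First I would bound $Y_n$ in $L^p(0,T;W^{\delta,q})$ uniformly in $n$. Applying the Burkholder--Davis--Gundy inequality \eqref{BDG} (with $m=p$) to $J^\delta Y_n(s)=\int_0^s(s-r)^{-\alpha}J^\delta S(s-r)\tilde G_n(v_n(r))\,{\rm d}W(r)$ gives
\begin{equation*}
\mathbb{E}\|Y_n(s)\|^p_{W^{\delta,q}}
\le C_p\,\mathbb{E}\left(\int_0^s (s-r)^{-2\alpha}\,\|J^\delta S(s-r)\tilde G_n(v_n(r))\|^2_{R(\mathcal{H};L^q)}\,{\rm d}r\right)^{\frac p2}.
\end{equation*}
Writing $J^\delta S(t)=\big(J^{\delta+g}S(t)\big)J^{-g}$ and using the analytic--semigroup smoothing estimate $\|J^{\delta+g}S(t)\|_{\mathcal{L}(L^q)}\le C\,t^{-\frac{\delta+g}{2}}$ for $t\in(0,T]$, together with $\|J^{-g}\tilde G_n(v_n(r))\|_{R(\mathcal{H};L^q)}=\|\tilde G_n(v_n(r))\|_{R(\mathcal{H};W^{-g,q})}\le C_{g,q}$, the integrand is dominated by $C\,(s-r)^{-2\alpha-(\delta+g)}$. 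This is integrable on $(0,s)$ precisely when $2\alpha+\delta+g<1$, and then $\sup_{s\in[0,T]}\mathbb{E}\|Y_n(s)\|^p_{W^{\delta,q}}\le C$ with $C$ independent of $n$; integrating in $s$ yields the uniform bound $\sup_n\mathbb{E}\|Y_n\|^p_{L^p(0,T;W^{\delta,q})}\le C$.

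Then I would invoke the deterministic mapping property of the factorization operator: for $\alpha>\beta+\frac1p$, $R_\alpha$ is bounded from $L^p(0,T;W^{\delta,q})$ into $C^\beta([0,T];W^{\delta,q})$. Setting $\tilde\zeta_n:=c_\alpha R_\alpha Y_n$ then produces a continuous modification of $\zeta_n$ with $\mathbb{E}\|\tilde\zeta_n\|^p_{C^\beta([0,T];W^{\delta,q})}\le \tilde C\,\mathbb{E}\|Y_n\|^p_{L^p(0,T;W^{\delta,q})}\le\tilde C$, uniformly in $n$. The two constraints on $\alpha$ are compatible — i.e. there exists $\alpha$ with $\beta+\frac1p<\alpha<\frac{1-g-\delta}{2}$ — exactly when $\beta+\frac\delta2+\frac1p<\frac{1-g}{2}$, which is the hypothesis, and this is precisely what determines the admissible range of $\beta,\delta,p$. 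The main obstacle is the bookkeeping of the two competing singularities in the BDG step: the factorization singularity $(s-r)^{-2\alpha}$ must be balanced against the smoothing singularity $(s-r)^{-(\delta+g)}$ so that their sum stays below $1$, while at the same time $\alpha$ is forced to be large enough ($\alpha>\beta+\frac1p$) for the second step to apply. It is this tension that yields the sharp constraint, and the uniformity in $n$ rests entirely on the contractivity of $R_n$.
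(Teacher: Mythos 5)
Your proposal is correct and takes essentially the same route as the paper, which proves this lemma by deferring to the factorization-method argument of \cite[Lemma 3.3]{BrzFer}: exactly your scheme of writing $\zeta_n=c_\alpha R_\alpha Y_n$, bounding $Y_n$ uniformly in $n$ via the Burkholder--Davis--Gundy inequality \eqref{BDG} together with the smoothing estimate $\|J^{\delta+g}S(t)\|_{\mathcal{L}(L^q)}\le C\,t^{-\frac{\delta+g}{2}}$ and the contractivity of $R_n$, and then applying the $L^p(0,T;W^{\delta,q})\to C^{\beta}(\left[0,T\right];W^{\delta,q})$ mapping property of $R_\alpha$, with the two constraints on $\alpha$ compatible precisely under the stated hypothesis. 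The only caveat is that the uniform bound $\|\tilde G_n(v_n(r))\|_{R(\mathcal{H};W^{-g,q})}\le C_{g,q}$ you invoke requires \textbf{(IG2)} when $q>2$ rather than just \textbf{(IG1)}, but this imprecision is already in the paper's own statement of the lemma (compare Lemma \ref{3.2}, where \textbf{(IG2)} is assumed) and is harmless since \textbf{(IG2)} holds in every application of the lemma.
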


As a consequence of Lemma \ref{3.2} and Lemma \ref{3.3} we have that there exist finite constants $K_{m,q}$ and $K_{\beta, \delta,q}$ such that 
\begin{equation*}
\sup_n \mathbb{E}\|\zeta_n\|^m_{L^m(0,T;W^{\varepsilon, q})}=(K_{m,q})^m
\end{equation*}
and
\begin{equation*}
\sup_n \mathbb{E}\|\zeta_n\|^p_{C^{\beta}(\left[0,T\right];W^{\delta,q})}=(K_{\beta, \delta,q})^p.
\end{equation*}
Therefore, by Chebychev's inequality,  for any $\eta>0$
\begin{equation}
\label{eta1}
\sup_n \mathbb{P}\left(\|\zeta_n\|_{L^m(0,T;W^{\varepsilon, q})}>\eta\right)\le\frac{(K_{m,q})}{\eta}
\end{equation}
and
\begin{equation}
\label{eta2}
\sup_n \mathbb{P}\left(\|\zeta_n\|_{C^{\beta}(\left[0,T\right];W^{\delta,q})}>\eta\right)\le \frac{(K_{\beta, \delta,q})}{\eta}.
\end{equation}
Thanks to these two last inequalities we get uniform estimates in probability for the sequence $\beta_n$ (see Propositions \ref{beta_n_prop} and \ref{beta_n_prop2}), and consequently for $\xi_n=\beta_n+ \zeta_n$ (see Proposition \ref{xi_n_prop}).
\newline
Let us now turn to the analysis of equation \eqref{beta_n_eq}. We shall analyze it pathwise, proving the following result.

\begin{proposition}
\label{beta_n_prop}
Let $q=4$ and assume \textbf{(IG1)} and \textbf{(IG2)}. Let $\xi_0 \in L^2$ and $v_0 \in \mathbb{L}^2$.
Then for every $n\in \mathbb{N}$ the paths of the process $\beta_n= \xi_n - \zeta_n$ solving \eqref{beta_n_eq} are such that
\begin{equation*}
\beta_ n \in C(\left[0,T\right];L^2) \cap L^{4}(0,T; L^4)\cap L^2(0,T;W^{1,2}) \cap C^{\frac 12}(\left[0,T\right];W^{-1,2})
\end{equation*}
$\mathbb{P}$-a.s., and for any $\varepsilon >0$ there exist constants $C_i=C_i(\varepsilon)$, $i=1,...4$ such that 
\begin{equation}
\label{betaC1}
\sup_n \mathbb{P} \left(\|\beta_n\|_{L^{\infty}(0,T;L^2)}>C_1 \right) \le \varepsilon
\end{equation}
\begin{equation}
\label{betaC3}
\sup_n \mathbb{P} \left(\|\beta_n\|_{L^2(0,T;W^{1,2})}>C_2 \right) \le \varepsilon
\end{equation}
\begin{equation}
\label{betaC2}
\sup_n \mathbb{P} \left(\|\beta_n\|_{L^{4}(0,T;L^4)}>C_3 \right) \le \varepsilon
\end{equation}
\begin{equation}
\label{betaC4}
\sup_n \mathbb{P} \left(\|\beta_n\|_{C^{\frac 12}(\left[0,T\right];W^{-1,2})}>C_4 \right) \le \varepsilon.
\end{equation}
\end{proposition}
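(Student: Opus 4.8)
The plan is to work pathwise: for fixed $\omega$, equation \eqref{beta_n_eq} is a linear parabolic equation for $\beta_n$ with the divergence-free drift $v_n$ and the forcing $-v_n\cdot\nabla\zeta_n$, and $\beta_n=\xi_n-\zeta_n$ already has enough regularity (from Proposition \ref{prop3} for $\xi_n$ and the Ornstein--Uhlenbeck regularity for $\zeta_n$) to justify the energy computations. The crucial observation, which makes all the constants uniform in $n$, is that the right-hand sides of every estimate below will involve only $\|\xi_0\|_{L^2}$ together with $\|v_n\|_{L^4(0,T;\mathbb{L}^4)}$ and $\|\zeta_n\|_{L^4(0,T;L^4)}$, for which uniform-in-$n$ bounds in probability are available from \eqref{Rem_vel_eqL4} and from Lemma \ref{3.2} (via \eqref{eta1}, with $q=4$ and the embedding $W^{\varepsilon,4}\hookrightarrow L^4$).

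First I would test \eqref{beta_n_eq} against $\beta_n$. Writing $\xi_n=\beta_n+\zeta_n$ and using the antisymmetry relations \eqref{F2}, the genuinely nonlinear part cancels, $\langle v_n\cdot\nabla\beta_n,\beta_n\rangle=0$, and the remaining coupling term may be rewritten as $\langle v_n\cdot\nabla\beta_n,\zeta_n\rangle$, which by H\"older and Young is bounded by $\tfrac12\|\nabla\beta_n\|_{L^2}^2+\tfrac12\|v_n\|_{\mathbb{L}^4}^2\|\zeta_n\|_{L^4}^2$. This yields the differential inequality
\[
\tfrac{d}{dt}\|\beta_n\|_{L^2}^2+\|\nabla\beta_n\|_{L^2}^2\le \|v_n\|_{\mathbb{L}^4}^2\|\zeta_n\|_{L^4}^2 ,
\]
and integrating in time, together with a H\"older estimate in time on the right, controls $\|\beta_n\|_{L^\infty(0,T;L^2)}^2$ and $\int_0^T\|\nabla\beta_n\|_{L^2}^2\,dt$ by $\|\xi_0\|_{L^2}^2+\|v_n\|_{L^4(0,T;\mathbb{L}^4)}^2\|\zeta_n\|_{L^4(0,T;L^4)}^2$. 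Absorbing the $L^2$-part into the supremum bound gives \eqref{betaC1} and \eqref{betaC3}.

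Next, \eqref{betaC2} would follow from the two-dimensional Ladyzhenskaya (Gagliardo--Nirenberg) inequality $\|\beta_n\|_{L^4}^4\le C\|\beta_n\|_{L^2}^2\|\nabla\beta_n\|_{L^2}^2$, which gives $\|\beta_n\|_{L^4(0,T;L^4)}^4\le C\|\beta_n\|_{L^\infty(0,T;L^2)}^2\|\beta_n\|_{L^2(0,T;W^{1,2})}^2$; it is thus a consequence of the two previous bounds. For \eqref{betaC4} I would estimate the increments directly from the integral form of \eqref{beta_n_eq}: using that $A\colon W^{1,2}\to W^{-1,2}$ is bounded and that $F\colon\mathbb{L}^4\times L^4\to W^{-1,2}$ is bounded by \eqref{F3}, one gets for $s<t$
\[
\|\beta_n(t)-\beta_n(s)\|_{W^{-1,2}}\le\int_s^t\big(\|\beta_n(r)\|_{W^{1,2}}+\|v_n(r)\|_{\mathbb{L}^4}\|\xi_n(r)\|_{L^4}\big)\,dr ,
\]
and Cauchy--Schwarz in time produces a factor $(t-s)^{1/2}$ times quantities already controlled (using $\|\xi_n\|_{L^4}\le\|\beta_n\|_{L^4}+\|\zeta_n\|_{L^4}$); combined with $\sup_t\|\beta_n\|_{W^{-1,2}}\le\|\beta_n\|_{L^\infty(0,T;L^2)}$, this gives the full $C^{1/2}([0,T];W^{-1,2})$ bound.

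Finally, each pathwise estimate has the schematic form $\|\beta_n\|_X\le\Phi\big(\|\xi_0\|_{L^2},\|v_n\|_{L^4(0,T;\mathbb{L}^4)},\|\zeta_n\|_{L^4(0,T;L^4)}\big)$ with $\Phi$ continuous, nondecreasing in its last two arguments, and independent of $n$. Given $\varepsilon>0$ I would choose thresholds $\alpha_3$ and $\eta$ so that, by \eqref{Rem_vel_eqL4} and \eqref{eta1}, each of $\mathbb{P}(\|v_n\|_{L^4(0,T;\mathbb{L}^4)}>\alpha_3)$ and $\mathbb{P}(\|\zeta_n\|_{L^4(0,T;L^4)}>\eta)$ is at most $\varepsilon/2$ uniformly in $n$; on the complementary event the pathwise bound yields a deterministic constant $C_i=C_i(\varepsilon)$, and a union bound gives \eqref{betaC1}--\eqref{betaC4}. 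The main obstacle is not any single estimate but the bookkeeping that guarantees $n$-independence: everything hinges on never letting the non-uniform regularized norms of $\tilde G_n$ enter, which is precisely what the splitting $\xi_n=\beta_n+\zeta_n$ achieves by confining the singular noise to $\zeta_n$ and leaving $\beta_n$ governed by a transport--diffusion equation with uniformly controlled data.
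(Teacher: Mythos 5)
Your proposal is correct and shares the paper's overall architecture: the same splitting $\xi_n=\beta_n+\zeta_n$, the same cancellation $\langle v_n\cdot\nabla\beta_n,\beta_n\rangle=0$ via \eqref{F2} so that only the coupling term $\langle v_n\cdot\nabla\beta_n,\zeta_n\rangle$ survives, and the same final transfer from pathwise bounds to uniform-in-$n$ bounds in probability via the estimates on $v_n$ and $\zeta_n$. Where you genuinely diverge is in how the energy estimate is closed. After the common bound $|\langle v_n\cdot\nabla\beta_n,\zeta_n\rangle|\le\|\nabla\beta_n\|_{L^2}\|v_n\|_{\mathbb{L}^4}\|\zeta_n\|_{L^4}$, the paper applies Gagliardo--Nirenberg to $v_n$, converts $\|\nabla v_n\|_{L^2}$ into $\|\xi_n\|_{L^2}$ through the curl-norm equivalence \eqref{equivnorm}, re-splits $\xi_n=\beta_n+\zeta_n$, and closes with Gronwall's lemma, which produces bounds carrying the exponential factor $e^{C\int_0^T\|\zeta_n(r)\|_{L^4}^2\,{\rm d}r}$ and uses the $L^{\infty}(0,T;\mathbb{L}^2)$ bound \eqref{rem_vel_eq} on $v_n$. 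You instead absorb $\tfrac12\|\nabla\beta_n\|_{L^2}^2$ at once by Young's inequality and keep the forcing as $\|v_n\|_{\mathbb{L}^4}^2\|\zeta_n\|_{L^4}^2$, which after H\"older in time is controlled by $\|v_n\|_{L^4(0,T;\mathbb{L}^4)}^2\|\zeta_n\|_{L^4(0,T;L^4)}^2$ and hence by \eqref{Rem_vel_eqL4} and \eqref{eta1}; this dispenses with Gronwall entirely (the Gagliardo--Nirenberg step being implicitly shifted to the derivation of \eqref{Rem_vel_eqL4} in Proposition \ref{v_n_prop}) and yields bounds that are polynomial, rather than exponential, in the controlled quantities --- a cleaner route that buys nothing extra probabilistically but simplifies the bookkeeping. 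The remaining steps coincide with the paper's: \eqref{betaC2} by Ladyzhenskaya's inequality from \eqref{betaC1} and \eqref{betaC3}, and \eqref{betaC4} via \eqref{F3} with the $(t-s)^{1/2}$ factor from Cauchy--Schwarz in time, which is exactly the paper's use of the embedding $\{u\in L^2(0,T):u'\in L^2(0,T)\}\subset C^{\frac12}([0,T])$ for $W^{-1,2}$-valued functions. The one point you leave implicit is the claim $\beta_n\in C([0,T];L^2)$; the paper deduces it from $\beta_n\in L^2(0,T;W^{1,2})$ and $\frac{{\rm d}\beta_n}{{\rm d}t}\in L^2(0,T;W^{-1,2})$ by \cite[Theorem III.1.2]{Temam2001}, and since your estimates establish precisely these two facts, the conclusion is available to you as well.
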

\begin{proof}
By definition and merging the regularity of $\xi_n$ and $\zeta_n$ we have that $\beta_n \in L^{\infty}(0,T;L^2)$.
\newline
Let us prove estimates \eqref{betaC1}-\eqref{betaC4}. We begin with the usual energy estimate
 
\begin{equation}
\label{enest}
\frac 12 \frac{\rm d}{{\rm d}t} \|\beta_n(t)\|^2_{L^2}+ \|\nabla \beta_n(t)\|_{L^2}^2= - \langle  v_n(t) \cdot \nabla \xi_n(t), \beta_n(t)\rangle.
\end{equation}
Let us focus on the trilinear term. By means of \eqref{F2}, Gagliardo-Nirenberg and Young's inequalities, and using \eqref{equivnorm} we get
\begin{align*}
- \langle  v_n(t) \cdot \nabla \xi_n(t), \beta_n(t)\rangle
&=\langle v_n(t) \cdot \nabla \beta_n(t), \xi_n(t)\rangle
\\
&=\langle v_n(t) \cdot \nabla \beta_n(t), \beta_n(t)\rangle+ \langle v_n(t) \cdot \nabla \beta_n(t), \zeta_n(t)\rangle
\\
&=\langle v_n(t) \cdot \nabla \beta_n(t), \zeta_n(t)\rangle
\\
&\le \|\nabla \beta_n(t)\|_{L^2} \|v_n(t)\|_{\mathbb{L}^4} \|\zeta_n(t)\|_{L^4}
\\
&\le C\|\nabla \beta_n(t)\|_{L^2}\|\zeta_n(t)\|_{L^4} \|v_n(t)\|^{\frac 12}_{\mathbb{L}^2} \|\nabla v_n(t)\|_{\mathbb{L}^2}^{\frac 12}
\\
&\le C\|\nabla \beta_n(t)\|_{L^2}\|\zeta_n(t)\|_{L^4} \left(\frac 12\|v_n(t)\|_{\mathbb{L}^2}+ \frac 12 \|\nabla v_n(t)\|_{\mathbb{L}^2}\right)
\\
&\le C\|\nabla \beta_n(t)\|_{L^2}\|\zeta_n(t)\|_{L^4} \left(\frac 12\|v_n(t)\|_{\mathbb{L}^2}+ \frac 12 \|\xi_n(t)\|_{L^2}\right)
\\
&\le \frac 14 \|\nabla \beta_n(t)\|^2_{L^2} + C \|\zeta_n(t)\|^2_{L^4} \|v_n(t)\|^2_{\mathbb{L}^2}+ \frac 14 \|\nabla \beta_n(t)\|^2_{L^2} 
\\
& \qquad \quad+C \|\zeta_n(t)\|^2_{L^4} \|\beta_n(t)\|^2_{L^2}+C \|\zeta_n(t)\|^2_{L^4} \|\zeta_n(t)\|^2_{L^2}
\\
&=\frac 12\|\nabla \beta_n(t)\|^2_{L^2}+C_1 \|\zeta_n(t)\|^2_{L^4}\left(\|v_n(t)\|^2_{\mathbb{L}^2}+ \|\zeta_n(t)\|^2_{L^2}\right)
\\
&\qquad  + \frac {C_2}2 \|\zeta_n(t)\|^2_{L^4} \|\beta_n(t)\|^2_{L^2}.
\end{align*}
Let us set 
\begin{equation}
\psi_n (t):=2 C_1 \|\zeta_n(t)\|^2_{L^4}\left(\|v_n(t)\|^2_{\mathbb{L}^2}+ \|\zeta_n(t)\|^2_{L^2}\right).
\end{equation}
Using H\"older's inequality, by Lemma \ref{3.2} and Proposition \ref{v_n_prop} 
we have that $\psi_n \in L^1(0,T)$.

Then from \eqref{enest} we get
\begin{align}
\label{enest2}
\frac{\rm d}{{\rm d}t} \|\beta_n(t)\|^2_{L^2}+ \|\nabla \beta_n(t)\|_{L^2}^2
\le \psi_n (t)+ C_2 \|\zeta_n(t)\|^2_{L^4} \|\beta_n(t)\|^2_{L^2}.
\end{align}

Hence from Gronwall's lemma applied to inequality
\begin{align*}
\frac{\rm d}{{\rm d}t} \|\beta_n(t)\|^2_{L^2}
\le \psi_n (t)+ C_2 \|\zeta_n(t)\|^2_{L^4} \|\beta_n(t)\|^2_{L^2}
\end{align*}
we infer that 
\begin{align}\label{C7}
\sup_{0\le t \le T} \|\beta_n(t)\|_{L^2}^2 
 &\le \|\xi_0\|^2_{L^2}e^{C_2\int_0^T\|\zeta_n(r)\|^2_{L^4}\, {\rm d}r} + e^{C_2\int_0^T\|\zeta_n(r)\|^2_{L^4}\, {\rm d}r}\int_0^T  \psi_n(s)\,{\rm d}s.
\end{align}
Then, integrating in time estimate \eqref{enest2} we  
infer that for all $n$
\begin{equation}\label{C6}
\begin{split}
\int_0^T\|\nabla \beta_n(t)\|^2_{L^2}\, {\rm d}t 
&\le \|\xi_0\|^2_{L^2}+ \int_0^T \left(\psi_n(t)+C_2 \|\zeta_n(t)\|^2_{L^4} \|\beta_n(t)\|^2_{L^2}\right)\, {\rm d}t
\\
&\le  \|\xi_0\|^2_{L^2}+ C_2\left(\sup_{0 \le t \le T} \|\beta_n(t)\|^2_{L^2}\right)\int_0^T  \|\zeta_n(t)\|^2_{L^4}\, {\rm d}t +  \int_0^T \psi_n(t)\, {\rm d}t.
\end{split}
\end{equation}
Recalling \eqref{rem_vel_eq} and \eqref{eta1}, we infer that for any $\varepsilon >0$ there exists a constant $C_7$ such that 
\begin{equation*}
\sup_n \mathbb{P}\left(\int_0^T \psi_n(t)\,{\rm d}t>C_7\right) \le \varepsilon.
\end{equation*}
Therefore, from \eqref{C7} and \eqref{C6} we get that, for any $\varepsilon >0$ there exist suitable constants $R_1, R_2>0$ such that 
\begin{equation*}
\sup_n \mathbb{P} \left(\|\beta_n\|_{L^{\infty}(0,T;L^2)}>R_1 \right) \le \varepsilon, 
\qquad 
\sup_n \mathbb{P} \left(\|\nabla \beta_n\|_{L^2(0,T;L^2)}>R_2 \right) \le \varepsilon.
\end{equation*}
From these two last inequalities  it is straightforward to see that, for any $\varepsilon>0$, there exists a suitable constant $R_3>0$ such that 
\begin{equation*}
\sup_n \mathbb{P} \left(\|\beta_n\|_{L^2(0,T;W^{1,2})}>R_3 \right) \le \varepsilon.
\end{equation*}
These estimates prove \eqref{betaC1} and \eqref{betaC3}.

Now, as done in Proposition \ref{v_n_prop} we obtain \eqref{betaC2} from 
\eqref{betaC1} and \eqref{betaC3} by means of 
Gagliardo-Nirenberg inequality.

Finally,  from \eqref{beta_n_eq} we infer
\begin{equation*}
\left\Vert \frac{{\rm d}\beta_n}{{\rm d}t}\right\Vert_{L^2(0,T;W^{-1,2})}
\le \|A\beta_n\|_{L^2(0,T;W^{-1,2})} + \|v_n \cdot \nabla \xi_n\|_{L^2(0,T;W^{-1,2})}.
\end{equation*}
Bearing in mind \eqref{F3}  we get
\begin{align*}
\|v_n \cdot \nabla \xi_n\|_{L^2(0,T;W^{-1,2})} 
&\le \left(\int_0^T \|v_n(t)\|^2_{\mathbb{L}^4}\|\xi_n(t)\|^2_{L^4} \, {\rm d}t\right)^{\frac12}
\\
&\le \|v_n\|_{L^4(0,T;\mathbb{L}^4)}\|\xi_n\|_{L^4(0,T;L^4)}
\\
&\le \|v_n\|^2_{L^4(0,T;\mathbb{L}^4)}+\|\beta_n\|^2_{L^4(0,T;L^4)}+\|\zeta_n\|^2_{L^4(0,T;L^4)}
\end{align*}
Thus,
\begin{align*}
\left\Vert \frac{{\rm d}\beta_n}{{\rm d}t}\right\Vert_{L^2(0,T;W^{-1,2})}
&\le \|\beta_n\|_{L^2(0,T;W^{1,2})} 
+\|v_n\|^2_{L^4(0,T;\mathbb{L}^4)}+\|\beta_n\|^2_{L^4(0,T;L^4)}+\|\zeta_n\|^2_{L^4(0,T;L^4)}.
\end{align*}
From \eqref{betaC3}, \eqref{Rem_vel_eqL4}, \eqref{betaC2} and \eqref{eta1}
 we  find that for any $\varepsilon >0$, there exists a suitable constant $R_4>0$ such that 
\begin{equation*}
\sup_n\mathbb{P}\left(\left\Vert \frac{{\rm d}\beta_n}{{\rm d}t}\right\Vert_{L^2(0,T;W^{-1,2})}>R_4 \right) \le \varepsilon.
\end{equation*}

Now we recall the Sobolev's embedding Theorem $H^{1,2}(0,T)=\{u \in L^2(0,T): u' \in L^2(0,T)\} \subset C^{\frac 12}(\left[0,T\right])$. Hence, there exists a constant $R_5>0$ such that 
\begin{equation*}
\sup_n\mathbb{P}\left(\left\Vert\beta_n\right\Vert_{C^{\frac 12}(\left[0,T\right];W^{-1,2})}>R_5 \right) \le \varepsilon,
\end{equation*}
which proves \eqref{betaC4}. 

We conclude proving the continuity in time. From the previous estimates we have that
$\frac{{\rm d}\beta_n}{{\rm d}t}\in L^2(0,T;W^{-1,2})$ and $\beta_n \in L^2(0,T;W^{1,2})$. 
Therefore (see \cite[Theorem III.1.2]{Temam2001}) we get $\beta_n \in C([0,T];L^2)$.
\end{proof}

\begin{proposition}
\label{beta_n_prop2}
Assume that conditions of Proposition \ref{beta_n_prop} hold. In addition assume that condition \textbf{(IG2)} holds also for a $q>2$. Let $\xi_0 \in L^2 \cap L^q$ and $v_0 \in \mathbb{L}^2 \cap\mathbb{L}^q$.
Then for every $n\in \mathbb{N}$ the paths of the process $\beta_n= \xi_n - \zeta_n$ solving \eqref{beta_n_eq} are such that
\begin{equation*}
\beta_ n \in C(\left[0,T\right];L^2) \cap L^{\infty}(0,T; L^q)\cap L^{4}(0,T; L^4)
\cap L^2(0,T;W^{1,2}) \cap C^{\frac 12}(\left[0,T\right];W^{-1,2})
\end{equation*}
$\mathbb{P}$-a.s., and, in addition to \eqref{betaC1}-\eqref{betaC4}, for any $\varepsilon >0$ there exist a constant $C_5$, such that 
\begin{equation}
\label{betaC5}
\sup_n \mathbb{P} \left(\|\beta_n\|_{L^{\infty}(0,T;L^q)}>C_5 \right) \le \varepsilon.
\end{equation}
\end{proposition}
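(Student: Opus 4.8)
The plan is to treat \eqref{beta_n_eq} pathwise, as a deterministic linear parabolic equation with given divergence-free drift $v_n$ and with the (low-regularity) datum $\zeta_n$ entering through the transport term, and to run the $\mathbb{L}^q$-energy computation of Proposition \ref{Lqvpro} in the same spirit in which Proposition \ref{beta_n_prop} produced the $L^2$-bounds. Since \eqref{betaC1}--\eqref{betaC4} and all the claimed path regularity except membership in $L^\infty(0,T;L^q)$ are already provided by Proposition \ref{beta_n_prop}, only the new estimate \eqref{betaC5}, together with $\beta_n\in L^\infty(0,T;L^q)$ $\mathbb{P}$-a.s., has to be established. Throughout I would use that, by Proposition \ref{v_n_prop2}, $v_n\in L^\infty(0,T;\mathbb{L}^q)$ (uniformly in $n$, in probability, by \eqref{rem_vel_eq2}) and that $\zeta_n\in L^m(0,T;W^{\varepsilon,q})$ for every $m$ by Lemma \ref{3.2}, with the uniform probability bound \eqref{eta1}.

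First I would test \eqref{beta_n_eq} with $|\beta_n|^{q-2}\beta_n$ and integrate over $\mathbb{R}^2$. The dissipative term gives, as in Proposition \ref{Lqvpro}, $\langle A\beta_n,|\beta_n|^{q-2}\beta_n\rangle=(q-1)I_n$ with $I_n(t):=\int_{\mathbb{R}^2}|\beta_n|^{q-2}|\nabla\beta_n|^2\,dx\ge 0$. Splitting $\xi_n=\beta_n+\zeta_n$ in the transport term, the part acting on $\beta_n$ vanishes because $\nabla\cdot v_n=0$ (this is \eqref{Fq0} with $u=v_n$), while the part acting on $\zeta_n$ is rewritten by moving the gradient onto $\beta_n$ through the integration-by-parts identity \eqref{Fq} (justified, as $v_n$ need not lie in $\mathbb{L}^\infty$, by a density argument on the smooth approximants). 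This yields
\[
\frac1q\frac{d}{dt}\|\beta_n(t)\|_{L^q}^q+(q-1)I_n(t)\le (q-1)\int_{\mathbb{R}^2}|v_n|\,|\zeta_n|\,|\beta_n|^{q-2}\,|\nabla\beta_n|\,dx .
\]
Splitting $|\beta_n|^{q-2}|\nabla\beta_n|=|\beta_n|^{(q-2)/2}\cdot|\beta_n|^{(q-2)/2}|\nabla\beta_n|$ and applying Young's inequality absorbs half of $I_n$ into the left-hand side, leaving
\[
\frac1q\frac{d}{dt}\|\beta_n(t)\|_{L^q}^q+\frac{q-1}{2}I_n(t)\le \frac{q-1}{2}J_n(t),\qquad J_n(t):=\int_{\mathbb{R}^2}|v_n|^2|\zeta_n|^2|\beta_n|^{q-2}\,dx .
\]

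The principal obstacle is the control of the cross term $J_n$, and this is where the exponent bookkeeping must be done carefully. Setting $w_n:=|\beta_n|^{q/2}$, so that $\|w_n\|_{L^2}^2=\|\beta_n\|_{L^q}^q$, $I_n=\tfrac{4}{q^2}\|\nabla w_n\|_{L^2}^2$, and $|\beta_n|^{q-2}=w_n^{2-4/q}$, I would estimate $J_n$ by Hölder's inequality, using $v_n\in L^\infty(0,T;\mathbb{L}^q)$ for the factor $|v_n|^2$, the Sobolev embedding of $W^{\varepsilon,q}$ (Lemma \ref{3.2}) into a suitable $L^r$ for the factor $|\zeta_n|^2$, and the two-dimensional Gagliardo--Nirenberg inequality $\|w_n\|_{L^r}\le C\|w_n\|_{L^2}^{2/r}\|\nabla w_n\|_{L^2}^{1-2/r}$ to interpolate the remaining power of $w_n$. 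Because $2-4/q<2$, the resulting power of $\|\nabla w_n\|_{L^2}$ is strictly below $2$, so a further Young inequality produces a small multiple of $I_n$ plus a term sublinear in $\|\beta_n\|_{L^q}^q$; concretely
\[
J_n(t)\le \eta\,I_n(t)+C_\eta\, a_n(t)\,\bigl(1+\|\beta_n(t)\|_{L^q}^q\bigr),
\]
where $a_n(t)$ is a product of powers of $\|v_n(t)\|_{\mathbb{L}^q}$ and $\|\zeta_n(t)\|_{W^{\varepsilon,q}}$. The delicate check is that the admissible Hölder/Sobolev exponents (depending on $q$ and on $\varepsilon=g_0-g$) can indeed be chosen so that $a_n\in L^1(0,T)$ with an $n$-uniform bound; this is where Lemma \ref{3.2} (with $m$ large) and Proposition \ref{v_n_prop2} are used, and where the two-dimensional nature of the domain (through Gagliardo--Nirenberg) is essential.

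Choosing $\eta$ small enough to absorb $\eta I_n$ into the left-hand side, I would be left with the pathwise differential inequality $\frac{d}{dt}\|\beta_n(t)\|_{L^q}^q\le C\,a_n(t)\bigl(1+\|\beta_n(t)\|_{L^q}^q\bigr)$, to which Gronwall's lemma applies on each trajectory, giving
\[
\sup_{0\le t\le T}\|\beta_n(t)\|_{L^q}^q\le \Bigl(\|\xi_0\|_{L^q}^q+C\!\int_0^T a_n(s)\,ds\Bigr)\exp\Bigl(C\!\int_0^T a_n(s)\,ds\Bigr).
\]
Since $\int_0^T a_n\,ds<\infty$ $\mathbb{P}$-a.s., this proves $\beta_n\in L^\infty(0,T;L^q)$ a.s. To deduce \eqref{betaC5} I would argue exactly as in Proposition \ref{beta_n_prop}: the right-hand side above is a continuous increasing function of $\int_0^T a_n\,ds$, and the uniform-in-$n$ probability estimates \eqref{rem_vel_eq2} for $\|v_n\|_{L^\infty(0,T;\mathbb{L}^q)}$ and \eqref{eta1} for $\|\zeta_n\|_{L^m(0,T;W^{\varepsilon,q})}$ show that, for each $\varepsilon>0$, the event $\{\int_0^T a_n\,ds>M\}$ has probability at most $\varepsilon$ for $M$ large, uniformly in $n$; choosing $C_5$ accordingly yields \eqref{betaC5}.
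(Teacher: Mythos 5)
Your overall architecture is the same as the paper's: test with $|\beta_n|^{q-2}\beta_n$, kill the $\beta_n$-part of the transport term by \eqref{Fq0}, move the gradient off $\zeta_n$ by \eqref{Fq}, absorb half of the dissipation by Young, then apply Gronwall pathwise and convert the resulting bound into \eqref{betaC5} via the uniform probability estimates. The divergence --- and the gap --- is in the one step you yourself flag as ``delicate'': the control of $J_n=\int_{\mathbb{R}^2}|v_n|^2|\zeta_n|^2|\beta_n|^{q-2}\,dx$ by H\"older, Sobolev embedding of $W^{\varepsilon,q}$, and Gagliardo--Nirenberg on $w_n=|\beta_n|^{q/2}$. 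With the ingredients you specify, the bookkeeping does not close on the whole parameter range. Indeed, placing $|v_n|^2$ in $L^{q/2}$ (using $v_n\in L^{\infty}(0,T;\mathbb{L}^q)$), $|\zeta_n|^2$ in $L^{r_1}$ and $w_n^{2-4/q}$ in $L^{r_2}$ requires $\frac{2}{q}+\frac{1}{r_1}+\frac{1}{r_2}=1$ with $\frac{1}{r_2}>0$ (needed so that $r=(2-4/q)r_2<\infty$ for Gagliardo--Nirenberg), hence $\frac{1}{r_1}<1-\frac{2}{q}$. On the other hand, the best spatial integrability available for $\zeta_n$ is what Lemma \ref{3.2} plus Sobolev embedding give, namely $\zeta_n(t)\in L^{\sigma}$ for $2\le\sigma\le\frac{2q'}{2-\varepsilon q'}$ with $q'=\max(4,q)$ and $\varepsilon<1-g$; this forces $\frac{1}{r_1}\ge\frac{2}{q'}-\varepsilon>\frac{2}{q'}-(1-g)$. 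For $2<q<4$ (so $q'=4$) the two constraints are incompatible as soon as $\frac{1}{2}-(1-g)\ge 1-\frac{2}{q}$, i.e. $q\le\frac{4}{3-2g}$: for instance $q=3$, $g=\frac{9}{10}$ admits no choice of exponents, although the proposition is claimed (and true) for every $q>2$ and every $g\in(0,1)$.

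The argument is repairable, but not as written: for $2<q<4$ one should place $|v_n|^2$ in $L^2$ using $v_n\in L^4(0,T;\mathbb{L}^4)$ (estimate \eqref{Rem_vel_eqL4}, which needs only the energy bound), take $\frac{1}{r_2}\le\varepsilon$ small, and compensate the loss of time-uniformity by H\"older in time together with the arbitrarily high time-integrability of $\zeta_n$ from Lemma \ref{3.2} --- exactly the verification your proposal defers. The paper avoids the problem altogether by a structurally different estimate of the cross term: it bounds $\|v_n\|_{L^{\infty}}\le C\|v_n\|_{W^{1,q}}$ by the Sobolev embedding \eqref{Sobolev}, uses the curl equivalence \eqref{equivnorm} to get $\|v_n\|_{W^{1,q}}\le C\bigl(\|v_n\|_{\mathbb{L}^q}+\|\xi_n\|_{L^q}\bigr)$, splits $\|\xi_n\|_{L^q}\le\|\beta_n\|_{L^q}+\|\zeta_n\|_{L^q}$, and absorbs the resulting $\|\zeta_n\|_{L^2}^2\|\beta_n\|_{L^q}^2$ by Young into $C_1\|\beta_n\|_{L^q}^q+C_2\|\zeta_n\|_{L^2}^{2q/(q-2)}$, which feeds directly into Gronwall. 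This needs only $\zeta_n\in L^2\cap L^q$ in space (Lemma \ref{3.2} with $\varepsilon=0$), so it works uniformly for all $q>2$ and all $g\in(0,1)$; re-introducing the unknown through the velocity--vorticity relation, instead of interpolating on $\beta_n$, is the idea your proposal is missing.
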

\begin{proof}
Let us estimate the $L^q$-norm for $q>2$. Let $x \in \mathbb{R}^2$ and $t \in \left[0,T\right]$. We get
\begin{align*}
\frac{\partial}{\partial t}|\beta_n(t,x)|^q=q|\beta_n(t,x)|^{q-2}\beta_n(t,x)\left( \Delta \beta_n(t,x)-v_n(t,x) \cdot \nabla \xi_n(t,x)\right).
\end{align*}
Integrating on $\mathbb{R}^2$, by means of the integration by parts formula we get
\begin{align}
\label{bq}
\frac{{\rm d}}{{\rm d}t}\|\beta_n(t)\|^q_{L^q} 
&=q \langle |\beta_n(t)|^{q-2}\beta_n(t), \Delta \beta_n(t)\rangle
-q\langle |\beta_n(t)|^{q-2}\beta_n(t)v_n(t), \nabla \xi_n(t)\rangle 
\notag\\
&=-q(q-1)\||\beta_n(t)|^{\frac{q-2}{2}}\nabla \beta_n(t)\|^2_{L^2}
-q\langle |\beta_n(t)|^{q-2}\beta_n(t)v_n(t), \nabla \xi_n(t)\rangle. 
\end{align}
Let us estimate the nonlinear term. Thanks to \eqref{Fq}-\eqref{Fq0} 
we get
\begin{align*}
-q\langle |\beta_n(t)|^{q-2}&\beta_n(t)v_n(t), \nabla \xi_n(t)\rangle 
\\
&=-q\langle |\beta_n(t)|^{q-2}\beta_n(t)v_n(t), \nabla \beta_n(t)\rangle
-q\langle |\beta_n(t)|^{q-2}\beta_n(t)v_n(t), \nabla \zeta_n(t)\rangle
\\
&=q{(q-1)}\langle |\beta_n(t)|^{q-2}\zeta_n(t)v_n(t), \nabla \beta_n(t)\rangle\end{align*}
By means of Young's inequality and \eqref{Sobolev}, recalling \eqref{equivnorm} we get
\begin{align*}
|\langle |\beta_n(t)|^{q-2}&\zeta_n(t)v_n(t), \nabla \beta_n(t)\rangle|
\\
&\le \||\beta_n(t)|^{q-2} \nabla \beta_n(t)\|_{L^2} \|\zeta_n(t)\|_{L^2}\|v_n(t)\|_{L^{\infty}}\\
&\le\frac 12 \||\beta_n(t)|^{q-2} \nabla \beta_n(t)\|^2_{L^2} + \frac C2\|\zeta_n(t)\|^2_{L^2}\|v_n(t)\|^2_{H^{1,q}}
\\
&=\frac 12 \||\beta_n(t)|^{q-2} \nabla \beta_n(t)\|^2_{L^2} + C\|\zeta_n(t)\|^2_{L^2}\left(\|v_n(t)\|^2_{\mathbb{L}^q}+ \|\xi_n(t)\|_{L^q}^2\right)
\\
&\le \frac 12 \||\beta_n(t)|^{q-2} \nabla \beta_n(t)\|^2_{L^2} + C\|\zeta_n(t)\|^2_{L^2}\left(\|v_n(t)\|^2_{\mathbb{L}^q}+ \|\beta_n(t)\|_{L^q}^2+\|\zeta_n(t)\|_{L^q}^2\right)
\\
& \le \frac 12 \||\beta_n(t)|^{q-2} \nabla \beta_n(t)\|^2_{L^2} 
+C_1 \|\beta_n(t)\|^q_{L^q}+ C_2\|\zeta_n(t)\|^{\frac {2q}{q-2}}_{L^2}
\\
&\qquad \qquad + \|\zeta_n(t)\|^2_{L^2}\left(\|v_n(t)\|^2_{\mathbb{L}^q}+\|\zeta_n(t)\|_{L^q}^2 \right)
\end{align*}
Let us set 
\begin{equation*}
\varphi_n(t)=C_2\|\zeta_n(t)\|^{\frac {2q}{q-2}}_{L^2}
+ \|\zeta_n(t)\|^2_{L^2}\left(\|v_n(t)\|^2_{\mathbb{L}^q}+\|\zeta_n(t)\|_{L^q}^2 \right),
\end{equation*}
then from \eqref{bq} we get 
\begin{align}
\frac{{\rm d}}{{\rm d}t}\|\beta_n(t)\|^q_{L^q} 
+\frac{q(q-1)}{2}\||\beta_n(t)|^{q-2}\nabla \beta_n(t)\|^2_{L^2}
\le q(q-1)\left(\varphi_n(t) + C_1\|\beta_n(t)\|^q_{L^q}\right).
\end{align}
Using H\"older's inequality, by Lemma \ref{3.2} and Proposition \ref{v_n_prop2}, we have that $\varphi_n \in L^1(0,T)$ uniformly in $n$. Hence from Gronwall's lemma applied to inequality
\begin{align*}
\frac{{\rm d}}{{\rm d}t}\|\beta_n(t)\|^q_{L^q} 
\le q(q-1)\left( \varphi_n(t) + C_1\|\beta_n(t)\|^q_{L^q}\right),
\end{align*}
we infer that for all $n$
\begin{align}
\label{C8}
\sup_{0\le t \le T} \|\beta_n(t)\|_{L^q}^q 
&\le \|\xi_0\|^q_{L^q}e^{q(q-1)C_1T} + q(q-1)e^{q(q-1)C_1T}\int_0^T \varphi_n(s)\,{\rm d}s.
\end{align}
Recalling \eqref{rem_vel_eq2} and \eqref{eta1}, we infer that for any $\varepsilon >0$ there exists a constant $C_0$ such that 
\begin{equation*}
\sup_n \mathbb{P}\left(\int_0^T \varphi_n(t)\,{\rm d}t>C_0\right) \le \varepsilon.
\end{equation*}
Therefore, from \eqref{C8} we get that, for any $\varepsilon >0$ there exist suitable constant $R_4>0$ such that 
\begin{equation*}
\sup_n \mathbb{P} \left(\|\beta_n\|_{L^{\infty}(0,T;L^q)}>R_4 \right) \le \varepsilon.
\end{equation*}
This proves \eqref{betaC2}.
\end{proof}

In order to pass to the limit we shall now apply a tightness argument. Merging the estimates \eqref{eta1}-\eqref{eta2} for $\zeta_n$ and those for $\beta_n$ in Proposition \ref{beta_n_prop} we get the estimates of $\xi_n=\zeta_n+ \beta_n$. These estimates in probability are uniform with respect to $n$.

\begin{proposition}
\label{xi_n_prop}
\begin{enumerate}[label=\roman{*}), ref=(\roman{*})]
\item Let $q=4$ and assume conditions \textbf{(IG1)}, \textbf{(IG2)}, \textbf{(IG4)} and \textbf{(IG5)}.
Let $\xi_0 \in L^2$ and $v_0 \in \mathbb{L}^2$. Let $\xi_n$ be the solution to \eqref{vort_ap} as given in Proposition \ref{prop3}.
\newline
Then there exist $\gamma, \delta>0$ such that for any $\varepsilon>0$ there exist positive constants $\eta_i$, $i=1,...,4$ such that 
\begin{equation*}
\sup_n \mathbb{P} \left(\|\xi_n\|_{L^{\infty}(0,T;L^2)}>\eta_1 \right) \le \varepsilon
\end{equation*}
\begin{equation*}
\sup_n \mathbb{P} \left(\|\xi_n\|_{L^4(0,T;L^4)}>\eta_2 \right) \le \varepsilon
\end{equation*}
\begin{equation*}
\sup_n \mathbb{P} \left(\|\xi_n\|_{L^2(0,T;W^{\delta,2})}>\eta_3 \right) \le \varepsilon
\end{equation*}
\begin{equation*}
\sup_n \mathbb{P} \left(\|\xi_n\|_{C^{\gamma}(\left[0,T\right];W^{-1,2})}>\eta_4 \right) \le \varepsilon.
\end{equation*}
\newline
\item If in addition we assume that condition \textbf{(IG2)} holds also for a $q>2$ and if $\xi_0 \in L^2 \cap L^q$, $v_0 \in \mathbb{L}^2 \cap \mathbb{L}^q$, then for any $\varepsilon>0$ there exist positive constants $\eta_5$, such that 
\begin{equation*}
\sup_n \mathbb{P} \left(\|\xi_n\|_{L^{\infty}(0,T;L^q)}>\eta_5 \right) \le \varepsilon.
\end{equation*}
\end{enumerate}
\end{proposition}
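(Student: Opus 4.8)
The plan is to exploit the decomposition $\xi_n = \zeta_n + \beta_n$ together with the triangle inequality in each of the norms appearing in the statement, and then to combine the uniform-in-$n$ bounds in probability already established for $\zeta_n$ (through Lemmas \ref{3.2} and \ref{3.3}, i.e. \eqref{eta1}--\eqref{eta2}) and for $\beta_n$ (Propositions \ref{beta_n_prop} and \ref{beta_n_prop2}) by a simple union bound. Concretely, for any of the Banach spaces $Y$ below one has $\|\xi_n\|_Y \le \|\beta_n\|_Y + \|\zeta_n\|_Y$, so if $C_\beta$ and $C_\zeta$ are chosen (uniformly in $n$) so that $\sup_n \mathbb{P}(\|\beta_n\|_Y > C_\beta) \le \varepsilon/2$ and $\sup_n \mathbb{P}(\|\zeta_n\|_Y > C_\zeta) \le \varepsilon/2$, then setting $\eta = C_\beta + C_\zeta$ gives $\{\|\xi_n\|_Y > \eta\} \subset \{\|\beta_n\|_Y > C_\beta\} \cup \{\|\zeta_n\|_Y > C_\zeta\}$ and hence $\sup_n \mathbb{P}(\|\xi_n\|_Y > \eta) \le \varepsilon$. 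Thus the whole proof reduces to checking that, for each target norm, both summands satisfy such a bound.

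For the $\beta_n$-part all the required estimates are exactly \eqref{betaC1}, \eqref{betaC2}, \eqref{betaC3}, \eqref{betaC4} (resp. \eqref{betaC5}), so nothing further is needed there; the work lies in reading off matching bounds for $\zeta_n$ from \eqref{eta1}--\eqref{eta2} after selecting the parameters $m$, $q$, $\beta$, $\delta$, $g_0$ appropriately. For $\|\xi_n\|_{L^{\infty}(0,T;L^2)}$ I would apply Lemma \ref{3.3} with $q = 2$, $\delta = 0$ and some $\beta > 0$, using $C^{\beta}([0,T];L^2) \subset L^{\infty}(0,T;L^2)$; for $\|\xi_n\|_{L^4(0,T;L^4)}$ I would apply Lemma \ref{3.2} with $q = 4$, $m = 4$ and $g_0 = g$ (so that the smoothness index $g_0-g$ vanishes and $W^{0,4}=L^4$); for $\|\xi_n\|_{L^2(0,T;W^{\delta,2})}$ I would fix $\delta \in (0, 1-g)$ and apply Lemma \ref{3.2} with $q = 2$, $m = 2$, $g_0 = g + \delta$, while controlling the $\beta_n$-contribution through the continuous embedding $W^{1,2} \subset W^{\delta,2}$ applied to \eqref{betaC3}; finally for $\|\xi_n\|_{C^{\gamma}([0,T];W^{-1,2})}$ I would use Lemma \ref{3.3} with $q = 2$, $\delta = 0$ and the embedding $L^2 \subset W^{-1,2}$, choosing $\gamma = \min(\tfrac12,\beta)$ so that both \eqref{betaC4} and the Hölder bound for $\zeta_n$ live in the same space. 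Part ii) is identical, combining \eqref{betaC5} with Lemma \ref{3.3} for $q > 2$ and $\delta = 0$ to bound $\zeta_n$ in $L^{\infty}(0,T;L^q)$.

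The only genuinely delicate point is the bookkeeping of the fractional-Sobolev parameters: one must verify that the exponents demanded by Lemmas \ref{3.2} and \ref{3.3} (in particular $g_0-g \ge 0$ with $g_0 < 1$, and $\beta + \delta/2 + 1/p < (1-g)/2$) are compatible with the target spaces, and that the spatial index $q$ used for $\zeta_n$ can be lowered to $2$ where needed. Since all the $\zeta_n$-estimates are uniform in $n$ and the required embeddings are continuous, no compactness or limiting argument enters here; the statement is purely a matter of assembling the two families of a priori bounds. I expect the main care to be in matching the Hölder exponent in the last estimate, where the $\tfrac12$-regularity in time of $\beta_n$ and the $\beta$-regularity of $\zeta_n$ must be reconciled by taking the minimum.
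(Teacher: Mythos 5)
Your proposal is correct and takes essentially the same route as the paper: there the Proposition is stated as an immediate consequence of merging the uniform-in-$n$ bounds \eqref{eta1}--\eqref{eta2} for $\zeta_n$ with Propositions \ref{beta_n_prop} and \ref{beta_n_prop2} for $\beta_n$ via the decomposition $\xi_n=\zeta_n+\beta_n$, which is exactly your triangle-inequality/union-bound argument. Your parameter bookkeeping (taking $\varepsilon=g_0-g$ in Lemma \ref{3.2}, $\delta\in(0,1-g)$, and $\gamma=\min(\beta,\tfrac12)$ to reconcile the H\"older exponents of $\beta_n$ and $\zeta_n$) matches the paper's own remark following the statement.
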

Let us notice that $\gamma= \min(\beta, \frac 12)$, with $\beta$ and $\gamma$ fulfilling hypothesis in Lemma \ref{3.3}; thus $0 < \gamma < \frac 12$ and $0<\delta<1$.

\subsection{Convergence and existence of a unique strong solution}
In order to pass to the limit we exploit a tightness argument. This requires some technical results. If we proceed as in \cite[Lemma 3.3]{BrzMot} and \cite[Lemma 5.3]{BrzFer}, we get the following compactness result. 
\begin{lemma}
Let $\alpha, q>1$ and define
\begin{equation*}
Z=L^{\alpha}_w(0,T;L^q) \cap C(\left[0,T\right];U') \cap L^2(0,T; L^2_{loc})\cap C(\left[0,T\right];L^2_w).
\end{equation*}
Let $\mathcal{T}$ be the supremum of the corresponding topologies. Then a set $K \subset Z$ is $\mathcal{T}$-relatively compact if the following conditions hold:
\begin{enumerate}[label=\roman{*}., ref=(\roman{*})]
\item $\sup_{f \in K}\|f\|_{L^{\alpha}(0,T;L^q)}< \infty$
\item $\exists \ \gamma >0: \sup_{f \in K} \|f\|_{C^{\gamma}(\left[0,T\right];W^{-1,2})}< \infty$
\item $\exists \ \delta >0: \sup_{f \in K} \|f\|_{L^2(0,T;W^{\delta,2})}< \infty$
\item $\sup_{f \in K}\|f\|_{L^{\infty}(0,T;L^2)}< \infty$
\end{enumerate}
\end{lemma}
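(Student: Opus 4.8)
The plan is to establish relative compactness separately in each of the four spaces whose topologies generate $\mathcal{T}$, and then to combine them by successive extraction. Since a sequence converges in the supremum topology $\mathcal{T}$ precisely when it converges in each of the four component topologies, it suffices to show that an arbitrary sequence $\{f_n\}\subset K$ admits a \emph{single} subsequence converging simultaneously in $L^{\alpha}_w(0,T;L^q)$, in $C([0,T];U')$, in $L^2(0,T;L^2_{loc})$ and in $C([0,T];L^2_w)$, the four limits necessarily agreeing.

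For the first factor, since $1<\alpha<\infty$ and $1<q<\infty$ the space $L^{\alpha}(0,T;L^q)$ is reflexive and separable, so by condition (i) and the Banach--Alaoglu theorem $K$ is contained in a bounded, hence weakly metrizable, set that is relatively weakly sequentially compact; this yields a first subsequence converging weakly. For $C([0,T];U')$ I would invoke the Arzel\`a--Ascoli theorem. For each fixed $t$, condition (iv) bounds $\{f(t):f\in K\}$ in $L^2$; since the embedding $U\subset W^{1,2}\subset L^2$ is compact, so is its adjoint $L^2\subset U'$, whence $\{f(t)\}$ is relatively compact in $U'$. Equicontinuity follows from (ii) together with the continuous embedding $W^{-1,2}\subset U'$ (dual to $U\subset W^{1,2}$), giving $\|f(t)-f(s)\|_{U'}\le C\|f(t)-f(s)\|_{W^{-1,2}}\le C|t-s|^{\gamma}$ uniformly over $K$.

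The most delicate point is compactness in $L^2(0,T;L^2_{loc})$, where the unboundedness of $\mathbb{R}^2$ obstructs a global argument. I would localize: on each ball $B_R=\{|x|<R\}$ the restriction of $W^{\delta,2}$ embeds compactly into $L^2(B_R)$ by Rellich--Kondrachov, while $L^2(B_R)\subset W^{-1,2}(B_R)$ continuously. Condition (iii) provides the bound in $L^2(0,T;W^{\delta,2})$ and condition (ii) controls the time increments in $W^{-1,2}$, so the Aubin--Lions--Simon lemma yields relative compactness of $K$ restricted to $B_R$ in $L^2(0,T;L^2(B_R))$; a diagonal extraction over $R\in\mathbb{N}$ then gives a subsequence converging in $L^2(0,T;L^2(B_R))$ for every $R$, i.e. in $L^2(0,T;L^2_{loc})$. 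For the last factor $C([0,T];L^2_w)$ I would use the weak form of Arzel\`a--Ascoli: by (iv) every $f(t)$ lies in a fixed weakly compact, weakly metrizable ball of $L^2$, and weak equicontinuity of $t\mapsto\langle f(t),h\rangle$ for $h\in U$ follows from (ii) via $|\langle f(t)-f(s),h\rangle|\le\|f(t)-f(s)\|_{U'}\|h\|_{U}\le C|t-s|^{\gamma}\|h\|_{U}$, extended to all $h\in L^2$ by density and the uniform $L^2$-bound.

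Combining the four steps by refining the subsequence at each stage produces one subsequence converging in all four topologies. The limits coincide: testing against $\phi\in C^{\infty}_c((0,T)\times\mathbb{R}^2)\subset U$ shows that the $C([0,T];U')$-limit, the $L^2(0,T;L^2_{loc})$-limit, the weak $L^2$-limit and the weak $L^{\alpha}(L^q)$-limit all agree as distributions. This gives relative compactness of $K$ in $(Z,\mathcal{T})$. The main obstacle is the third step: the absence of global compactness on $\mathbb{R}^2$ forces the localized Aubin--Lions--Simon argument followed by a diagonal extraction, and one must check that (ii)--(iii) supply exactly the spatial regularity (via $\delta>0$) and temporal regularity (via $\gamma>0$) required by Simon's criterion on each ball.
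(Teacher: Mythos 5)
Your proof is correct and follows essentially the same route as the paper, which does not prove this lemma directly but obtains it by proceeding as in \cite[Lemma 3.3]{BrzMot} and \cite[Lemma 5.3]{BrzFer}: those arguments likewise combine Banach--Alaoglu (reflexivity) for the $L^{\alpha}_w(0,T;L^q)$ factor, Arzel\`a--Ascoli through the compact embedding $L^2\subset U'$, a localized Aubin--Lions--Simon argument with diagonal extraction over balls for $L^2(0,T;L^2_{loc})$, and the weak Arzel\`a--Ascoli theorem for $C(\left[0,T\right];L^2_w)$, followed by successive extraction and identification of the limits. No essential step is missing.
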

From this Lemma we also get the following tightness criterion.
\begin{lemma}
\label{Lem2BrzFer}
We are given parameters $\gamma>0$, $\delta>0$, $\alpha, q>1$ 
and a sequence $\{f_n\}_{n \in \mathbb{N}}$ of adapted processes in $C(\left[0,T\right];U')$. 
\newline
Assume that for any $\varepsilon>0$ there exist positive constants $R_i=R_i(\varepsilon)$ $(i=1,...,4)$ such that 
\begin{align*}
&\sup_n \mathbb{P}\left( \|f_n\|_{L^{\alpha}(0,T;L^q)}>R_1\right) \le \varepsilon
\\
&\sup_n \mathbb{P}\left( \|f_n\|_{C^{\gamma}(\left[0,T\right];W^{-1,2})}>R_2\right) \le \varepsilon
\\
&\sup_n \mathbb{P}\left( \|f_n\|_{L^2(0,T;W^{\delta,2})}>R_3\right) \le \varepsilon
\\
&\sup_n \mathbb{P}\left( \|f_n\|_{L^{\infty}(0,T;L^2)}>R_4\right) \le \varepsilon
\end{align*}
Let $\mu_n$ be the law of $f_n$ on $Z=L^{\alpha}_w(0,T;L^q) \cap C(\left[0,T\right];U') \cap L^2(0,T; L^2_{loc})\cap C(\left[0,T\right];L^2_w)$. Then the sequence $\{\mu_n\}_{n \in \mathbb{N}}$ is tight in $Z$.
\end{lemma}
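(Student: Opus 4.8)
The strategy is to deduce tightness in $Z$ from tightness in each of the four constituent spaces separately, using that $Z$ carries the supremum of the four topologies. First I would recall the elementary principle: if $\{\mu_n\}$ is tight in each of finitely many topological spaces $Z_1,\dots,Z_4$ (with $Z = \bigcap_i Z_i$ equipped with the supremum topology), then $\{\mu_n\}$ is tight in $Z$. Indeed, given $\varepsilon>0$, one produces for each $i$ a compact set $\mathcal K_i \subset Z_i$ with $\inf_n \mu_n(\mathcal K_i) \ge 1-\varepsilon/4$; then $\mathcal K := \bigcap_i \mathcal K_i$ is compact in the supremum topology $Z$ and satisfies $\inf_n \mu_n(\mathcal K) \ge 1-\varepsilon$ by a union bound on the complements.

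Concretely, for a fixed $\varepsilon>0$ I would invoke the four probabilistic estimates in the hypothesis to select radii $R_1(\varepsilon/4),\dots,R_4(\varepsilon/4)$, and define the set
\begin{equation*}
K_\varepsilon = \left\{ f : \|f\|_{L^{\alpha}(0,T;L^q)}\le R_1,\ \|f\|_{C^{\gamma}([0,T];W^{-1,2})}\le R_2,\ \|f\|_{L^2(0,T;W^{\delta,2})}\le R_3,\ \|f\|_{L^{\infty}(0,T;L^2)}\le R_4 \right\}.
\end{equation*}
By the four inequalities together with a union bound over the complementary events, $\inf_n \mathbb P(f_n \in K_\varepsilon) \ge 1 - 4\cdot(\varepsilon/4) = 1-\varepsilon$, so $\inf_n \mu_n(K_\varepsilon)\ge 1-\varepsilon$. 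It therefore remains to check that $K_\varepsilon$ is relatively compact in $Z$, which is exactly the content of the preceding compactness lemma: the four defining bounds are precisely conditions (i)--(iv) there (with the roles of $\alpha,q,\gamma,\delta$ matched), and the uniformity over $f\in K_\varepsilon$ is immediate since each norm is bounded by the corresponding $R_i$.

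Having identified a relatively compact set $\overline{K_\varepsilon}$ in $Z$ with $\inf_n \mu_n(\overline{K_\varepsilon}) \ge 1-\varepsilon$, tightness of $\{\mu_n\}$ in $Z$ follows by definition, since $\varepsilon>0$ was arbitrary. The main technical point to be careful about is the measurability and the passage from the almost-sure norm bounds to sets carrying the measures $\mu_n$: one must note that each of the four norms is a Borel-measurable functional on $Z$ (lower semicontinuous with respect to the relevant topology), so that $K_\varepsilon$ is a Borel subset of $Z$ and the pushforward estimates $\mu_n(K_\varepsilon) = \mathbb P(f_n \in K_\varepsilon)$ make sense. The genuinely substantive work---the compact embedding and interpolation yielding relative compactness of bounded sets---has already been carried out in the compactness lemma, so the only obstacle here is the bookkeeping that ties the four quantitative probability estimates to a single compact set via the union bound.
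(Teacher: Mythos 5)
Your proposal is correct and follows essentially the paper's own route: choose the radii $R_i(\varepsilon/4)$ from the four probability estimates, take a union bound over the complementary events, and observe that the resulting set bounded in the four norms is exactly a set satisfying conditions (i)--(iv) of the preceding compactness lemma, hence $\mathcal{T}$-relatively compact in $Z$, which gives tightness. One caution: the ``elementary principle'' in your opening paragraph (that an intersection of sets, each compact in one of the constituent topologies, is compact in the supremum topology) is not valid in general and is also not needed, since, as you note, the relative compactness of $K_\varepsilon$ in $Z$ must come from the compactness lemma itself rather than from any formal intersection argument.
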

\begin{remark}
\label{ASD...}
Lemma \ref{Lem2BrzFer} holds true also for the case of divergence free vector field spaces.
\end{remark}

We are now ready to prove the main Theorem. We point out that, differently from \cite{MikRoz2004}
dealing with $L^p(\mathbb{R}^d)$-valued solutions for $p>d$,   our result provides 
$ L^2(\mathbb{R}^2)$-valued solutions $\xi$ if $\xi_0 \in L^2$, $v_0\in \mathbb{L}^2$ and 
$L^q(\mathbb{R}^2)\cap L^2(\mathbb{R}^2)$-valued solutions $\xi$  if $\xi_0 \in L^2 \cap L^q$, $v_0\in \mathbb{L}^2 \cap \mathbb{L}^q$, for $q>2$. 

Formally, the results of Sections \ref{tight_sec_v} and \ref{tight_sec_xi}, Lemma \ref{Lem2BrzFer} 
and Remark \ref{ASD...} provide the tightness to pass to the limit. 
Proceeding similarly as in the proof of \cite[Theorem 3.6.]{BrzFer} we get the following result.
\begin{theorem}
\label{vor_bad}
\begin{enumerate}[label=\roman{*}), ref=(\roman{*})]
\item Let $q=4$ and assume conditions \textbf{(IG1)}, \textbf{(IG2)}, \textbf{(IG3)} and \textbf{(IG4)}. Let $\xi_0 \in L^2$ and $v_0 \in \mathbb{L}^2$. Then there exists a martingale solution $((\tilde{\Omega}, \tilde{\mathcal{F}}, \tilde{\mathbb{P}}), \tilde W,\tilde{\xi})$ (in the sense of Definition \ref{mar_sol}) to \eqref{vort}. In addition $\tilde{\xi} \in L^4(0,T;L^4)$ $\mathbb{P}$-a.s.. 
\item
If, in addition, we assume that condition \textbf{(IG2)} holds also for a $q>2$, and $\xi_0 \in L^2\cap L^q$, $v_0 \in \mathbb{L}^2\cap \mathbb{L}^q$, then also $\tilde{\xi} \in L^{\infty}(0,T;L^q)$ $\mathbb{P}$-a.s..
\end{enumerate}
\end{theorem}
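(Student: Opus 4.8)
The plan is to follow the classical martingale-solution scheme of \cite{BrzFer}, built upon the uniform-in-$n$ estimates of Sections \ref{tight_sec_v} and \ref{tight_sec_xi}. First I would consider the joint laws of the triples $(v_n,\xi_n,W)$ on the product space $Z_v\times Z_\xi\times C([0,T];\mathcal{H}_0)$, where $Z_\xi=L^4_w(0,T;L^4)\cap C([0,T];U')\cap L^2(0,T;L^2_{loc})\cap C([0,T];L^2_w)$ is the space of Lemma \ref{Lem2BrzFer} with $\alpha=q=4$, where $Z_v$ is its divergence-free vector analogue supplied by Remark \ref{ASD...}, and where $\mathcal{H}_0\supset\mathcal{H}$ is a larger separable Hilbert space into which $\mathcal{H}$ embeds by a Hilbert--Schmidt map, so that $W$ has a Radon law on the Polish space $C([0,T];\mathcal{H}_0)$. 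The estimates in probability of Propositions \ref{v_n_prop}, \ref{v_n_prop2} and \ref{xi_n_prop}, fed into Lemma \ref{Lem2BrzFer} and Remark \ref{ASD...}, yield tightness of this family of joint laws.

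Second, by Prokhorov's theorem together with the Jakubowski--Skorokhod representation theorem (needed here because the weak topologies in $Z_v\times Z_\xi$ are not metrizable) I would pass to a subsequence and construct, on a new probability space $(\tilde\Omega,\tilde{\mathcal{F}},\tilde{\mathbb{P}})$, random variables $(\tilde v_n,\tilde\xi_n,\tilde W_n)$ with the same laws as $(v_n,\xi_n,W)$ and converging $\tilde{\mathbb{P}}$-a.s. to a limit $(\tilde v,\tilde\xi,\tilde W)$ in the product topology. Endowing $\tilde\Omega$ with the filtration generated by $(\tilde v,\tilde\xi,\tilde W)$, one checks that $\tilde W$ is a cylindrical $\mathcal{H}$-Wiener process and that each $\tilde\xi_n$ still satisfies the weak formulation \eqref{weak_vor}, with $\tilde G_n(\tilde v_n)$ and $\tilde W_n$ in place of $\tilde G(v)$ and $W$, since the identity \eqref{weak_vor} is preserved under equality of laws.

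Third, I would pass to the limit in each term of \eqref{weak_vor} tested against a fixed $z\in C^\infty_{sol}$. The linear terms converge by the $C([0,T];L^2_w)$ and $L^2(0,T;L^2_{loc})$ convergences carried by $Z_\xi$. For the nonlinear term $\langle \tilde v_n\,\tilde\xi_n,\nabla z\rangle$, since $z$ has compact support I would combine the strong $L^2(0,T;L^2_{loc})$ convergence of both $\tilde v_n$ and $\tilde\xi_n$ with their uniform $L^4$-bounds to justify the passage to the limit on the support of $z$. The regularity claims $\tilde\xi\in L^4(0,T;L^4)$ in (i), and $\tilde\xi\in L^\infty(0,T;L^q)$ in (ii), then follow from the lower semicontinuity of these norms under the weak and weak-$*$ convergences of $Z_\xi$, using once more the uniform estimates of Proposition \ref{xi_n_prop}.

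The main obstacle is the convergence of the stochastic term $\int_0^t\tilde G_n(\tilde v_n)\,{\rm d}\tilde W_n$ to $\int_0^t\tilde G(\tilde v)\,{\rm d}\tilde W$, since both the integrand and the driving noise vary with $n$. The plan is to split the difference into the contribution of $\tilde G_n-\tilde G$, controlled by the Hille--Yosida convergence $\|G_n(v)-G(v)\|_{R(\mathcal{H};H^{1-g,q})}\to 0$ together with the uniform bound \eqref{Gn}, and the contribution of $\tilde G(\tilde v_n)-\tilde G(\tilde v)$, controlled through the continuity assumption \textbf{(I$\tilde G$3)} (inherited from \textbf{(IG3)}) and the $\tilde{\mathbb{P}}$-a.s. convergence of $\tilde v_n$ in the $\mathbb{L}^2_{loc}$ or weak $\mathbb{L}^2$ topology. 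One then identifies the limit as a genuine It\^o integral against $\tilde W$ by the standard argument of showing that the candidate process $\tilde M(t):=\langle\tilde\xi(t),z\rangle-\langle\xi_0,z\rangle-\int_0^t\langle\tilde\xi(s),\Delta z\rangle\,{\rm d}s-\int_0^t\langle\tilde v(s)\tilde\xi(s),\nabla z\rangle\,{\rm d}s$ is a continuous square-integrable $\tilde{\mathcal{F}}_t$-martingale whose quadratic variation matches $\int_0^t\|\tilde G(\tilde v(s))^*z\|^2_{\mathcal{H}}\,{\rm d}s$, and invoking a martingale representation theorem. Assembling all these limits shows that $((\tilde\Omega,\tilde{\mathcal{F}},\tilde{\mathbb{P}}),\tilde W,\tilde\xi)$ is a martingale solution to \eqref{vort} in the sense of Definition \ref{mar_sol}.
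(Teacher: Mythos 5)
Your proposal is correct and follows essentially the same route as the paper's proof: tightness of the pair $(v_n,\xi_n)$ via the uniform estimates of Propositions \ref{v_n_prop}, \ref{v_n_prop2}, \ref{xi_n_prop} and Lemma \ref{Lem2BrzFer} with Remark \ref{ASD...}, the Jakubowski--Skorokhod representation in the non-metric spaces $Z\times\mathbb{Z}$, passage to the limit with the nonlinear term handled through the $L^2(0,T;L^2_{loc})$ convergence and the uniform $L^4$ bounds, identification of the noise by showing the limit process is a martingale with the correct quadratic variation and invoking the martingale representation theorem, and part (ii) by lower semicontinuity under the $L^\infty_w(0,T;L^q)$ convergence. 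The only cosmetic difference is that you carry the Wiener process along in the tightness argument (via the Hilbert--Schmidt embedding $\mathcal{H}\subset\mathcal{H}_0$), which becomes redundant once you conclude with the martingale representation theorem, exactly as the paper does by constructing the limiting Wiener process $\tilde w$ only at that final step.
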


\begin{proof}
Let us prove (i). One proceeds as in \cite{BrzFer}. 
We fix $0 < \gamma < \frac 12$ and $0 < \delta <1$ appearing in Proposition \ref{xi_n_prop} and define the spaces
\begin{equation*}
Z=L^{4}_w(0,T;L^4) \cap C(\left[0,T\right];U') \cap L^2(0,T; L^2_{loc})\cap C(\left[0,T\right];L^2_w),
\end{equation*}
\begin{equation*}
\mathbb{Z}=L^{4}_w(0,T;\mathbb{L}^4) \cap C(\left[0,T\right];\mathbb{U}') \cap L^2(0,T; \mathbb{L}^2_{loc})\cap C(\left[0,T\right];\mathbb{L}^2_w),
\end{equation*}
with the topology $\mathcal{T}$ and $\tau$ respectively,  given by the supremum of the corresponding topologies. According to Lemma \ref{Lem2BrzFer} (with $\alpha= 4$, $q=4$), Proposition \ref{xi_n_prop}(i) provides that the sequence of laws of the processes $\xi_n$ is tight in $Z$. Moreover, according to Lemma \ref{Lem2BrzFer} (with $\alpha=q=4$) and Remark \ref{ASD...}, Propositions \ref{v_n_prop} provide that the sequence of laws of the processes $v_n$ is tight in $\mathbb{Z}$.
So the pair $(\xi_n, v_n)$ is tight in $Z\times \mathbb{Z}$.
\newline
By the Jakubowski's  generalization of the Skorokhod Theorem in non metric spaces (see \cite{BrzMot}, \cite{Jak1} and \cite{Jak2}) there exist subsequences $\{\xi_{n_k}\}_{k=1}^{\infty}$ and $\{v_{n_k}\}_{k=1}^{\infty}$, a stochastic basis $(\tilde{\Omega}, \tilde{\mathcal{F}}, \tilde{\mathbb{P}})$, $Z$-valued Borel measurable variables $\tilde \xi$ and $\{\tilde {\xi}_k\}_{k=1}^{\infty}$, $\mathbb{Z}$-valued Borel measurable variables $\tilde v$ and $\{\tilde {v}_k\}_{k=1}^{\infty}$ such that
\begin{itemize}
\item the laws of $\xi_{n_k}$ and $\tilde{\xi}_k$ are the same and $\tilde{\xi}_k$ converges to $\tilde \xi$ $\tilde{\mathbb P}$-a.s. with the topology $\mathcal{T}$
\item the laws of $v_{n_k}$ and $\tilde{v}_k$ are the same and $\tilde{v}_k$ converges to $\tilde v$ 
with the topology $\tau$.
\end{itemize}
Since each $\tilde{\xi}_k$ has the same law as $\xi_{n_k}$, it is a martingale solution to \eqref{vort_ap}; therefore each process 
\begin{equation*}
\tilde{M}_k(t)= \tilde{\xi}_k(t)-\tilde{\xi}(0) +\int_0^t A\tilde{\xi}_k(s)\, {\rm d}s
+ \int_0^t \tilde{v}_k(s)\cdot \nabla \tilde{\xi}_k(s)\, {\rm d}s
\end{equation*}
is a martingale with quadratic variation
\begin{equation*}
\ll \tilde{M}_k\gg(t)=\int_0^t \tilde{G}_k(\tilde {v}_k(s)) \tilde{G}_k(\tilde {v}_k(s))^*\, {\rm d}s.
\end{equation*}
Proceeding as in \cite{BrzFer} we can prove that 
\begin{equation*}
\langle \tilde{M}_k(t)-\tilde{M}(t),\varphi\rangle \rightarrow 0 \qquad \tilde {\mathbb P}-a.s.
\end{equation*}
for any $\varphi \in H^{s,2}$, 
 with $s>2$, with compact support, and every $t \in \left[0,T\right]$, where
 \begin{equation*}
\tilde{M}(t)= \tilde{\xi}(t)-\tilde{\xi}(0) +\int_0^t A\tilde{\xi}(s)\, {\rm d}s
+ \int_0^t \tilde{v}(s)\cdot \nabla \tilde{\xi}(s)\, {\rm d}s.
\end{equation*}
In particular, the convergence of the non linear term 
\begin{equation*}
\langle\int_0^t \tilde{v}_k(s)\cdot \nabla \tilde{\xi}_k(s)\, {\rm d}s, \varphi \rangle
\rightarrow 
\langle \int_0^t \tilde{v}(s)\cdot \nabla \tilde{\xi}(s)\, {\rm d}s, \varphi\rangle 
\end{equation*}
is obtained with a slightly modification of the proof of \cite[Lemma B1]{BrzMot}, exploiting the convergence of $\tilde{v}_k$ in $C(\left[0,T\right];\mathbb{L}^2_{loc})$ and of $\tilde{\xi}_k$ in $C(\left[0,T\right];L^2_{loc})$. 
\newline
For the convergence of the quadratic variation process 
\begin{equation*}
\int_0^t \langle \tilde{G}_k(\tilde{v}_k(s))^*\varphi_1, \tilde{G}_k(\tilde{v}_k(s))^*\varphi_2\rangle_{\mathcal{H}}\, {\rm d}s
\rightarrow 
\int_0^t \langle \tilde{G}(\tilde{v}(s))^*\varphi_1, \tilde{G}(\tilde{v}(s))^*\varphi_2\rangle_{\mathcal{H}}\, {\rm d}s,
\end{equation*}
for any $\varphi_1, \varphi_2 \in H^{-g}$, we proceed exactly as in \cite[Theorem 3.6]{BrzFer}. 
\newline
Similar convergence results show that the limit is a martingale. Therefore, we conclude appealing to the usual martingale representation Theorem: there exists a cylindrical $\mathcal{H}$-Wiener process $\tilde w$ such that 
\begin{equation*}
\langle \tilde{M}(t), \varphi\rangle= \langle \varphi, \int_0^t \tilde{G}(\tilde{v}(s)\,{\rm d}\tilde{w}(s)\rangle= \int_0^t \langle G(\tilde{v}(s))^*\varphi, \, {\rm d}\tilde{w}(s)\rangle.
\end{equation*}
Therefore, $\tilde{\xi}$ is a martingale solution to \eqref{vort} and $\tilde{\xi} \in L^4(0,T;L^4)$ $\mathbb{P}$-a.s..

Statement (ii) follows from Proposition \ref{xi_n_prop}(ii) and Proposition \ref{v_n_prop2}.  We can infer the existence of a subsequence $\{\tilde \xi_k\}_k$ converging in $L^{\infty}_w(0,T;L^q)$. The limit process $\tilde \xi$ is the solution to \eqref{vort} and $\tilde \xi \in L^{\infty}(0,T;L^q)$.
\end{proof}

From the pathwise uniqueness for $v$, stated in Proposition \ref{MiR}, we infer pathwise uniqueness for $\xi$. In particular, pathwise uniqueness and existence of martingale solutions implies existence of strong a solution. 
\begin{corollary}
\label{ASD}
Assume that the same assumptions as Theorem \ref{vor_bad}(i) hold, moreover assume \textbf{(IG5)}. Then there exists a unique strong solution to \eqref{vort}.
 \end{corollary}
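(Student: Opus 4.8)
The plan is to obtain the unique strong solution by combining two ingredients that are already at our disposal: existence of a martingale (hence PDE-weak) solution, furnished by Theorem \ref{vor_bad}(i), and pathwise uniqueness, which I would establish here using the extra hypothesis \textbf{(IG5)}. Once both hold, the conclusion follows from the Yamada--Watanabe / Gy\"ongy--Krylov principle (see e.g. \cite{Ikeda1989}), exactly as it was already invoked for \eqref{NSabs} right after Proposition \ref{MiR}: existence of a martingale solution together with pathwise uniqueness upgrades the solution to a strong one defined on the prescribed stochastic basis, and simultaneously yields uniqueness in law. So the only real work is to prove pathwise uniqueness for $\xi$.

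For pathwise uniqueness I would fix a stochastic basis $(\Omega,\mathcal{F},\{\mathcal{F}_t\},\mathbb{P})$ carrying a single $\mathcal{H}$-cylindrical Wiener process $W$, and take two solutions $\xi_1,\xi_2$ of \eqref{vort} issued from the same datum $\xi_0$ (with the same $v_0$). The key observation is that both the noise and the transport coefficient in \eqref{weak_vor} are governed entirely by the velocity $v$, which on this fixed basis is the \emph{unique} strong solution of \eqref{NSabs} granted by Proposition \ref{MiR} under \textbf{(IG5)}; hence the velocities underlying $\xi_1$ and $\xi_2$ coincide, $v_1=v_2=:v$ $\mathbb{P}$-a.s. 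Consequently the driving term $\tilde G(v)\,{\rm d}W$ is identical for the two vorticities, the stochastic parts cancel, and the difference $\Xi:=\xi_1-\xi_2$ solves, pathwise in $\omega$, the linear noise-free Cauchy problem
\begin{equation*}
{\rm d}\Xi(t)+\left[A\Xi(t)+v(t)\cdot\nabla\Xi(t)\right]{\rm d}t=0,\qquad \Xi(0)=0.
\end{equation*}
I would then close the argument by a pathwise energy estimate: testing against $\Xi$ and using $\langle A\Xi,\Xi\rangle=\|\nabla\Xi\|^2_{L^2}$ together with the antisymmetry $\langle F(v,\Xi),\Xi\rangle=0$ from \eqref{F2}, one gets $\frac{{\rm d}}{{\rm d}t}\|\Xi(t)\|^2_{L^2}+2\|\nabla\Xi(t)\|^2_{L^2}=0$, whence $\|\Xi(t)\|_{L^2}=0$ for all $t$; by the continuity $\xi_i\in C(\left[0,T\right];L^2)$ this gives $\xi_1=\xi_2$ on $\left[0,T\right]$ $\mathbb{P}$-a.s. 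Equivalently, the constraint $\xi=\nabla^{\perp}\cdot v$ built into \eqref{vort} already forces $\xi_1=\text{curl}\,v=\xi_2$ the instant $v_1=v_2$, bypassing the energy computation entirely.

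The delicate point I anticipate is the justification of the energy identity itself: the solution class of Definition \ref{mar_sol} only guarantees $\xi\in C(\left[0,T\right];L^2)$ (together with the integrability of Theorem \ref{vor_bad}), so $\nabla\Xi$ need not a priori belong to $L^2(0,T;L^2)$ and the pairing $\langle A\Xi,\Xi\rangle$ must be interpreted through the $W^{1,2}$--$W^{-1,2}$ duality after a regularisation of $\Xi$, combining the parabolic smoothing of $A$ with \eqref{F2}. This is precisely why the alternative route through the constraint $\xi=\text{curl}\,v$ is attractive: it transfers pathwise uniqueness from $v$ to $\xi$ without any further regularity, and is the cleanest way to conclude. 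The final step---feeding pathwise uniqueness and martingale existence into the abstract strong-existence theorem---is then routine, being identical in spirit to the passage already carried out for \eqref{NSabs}.
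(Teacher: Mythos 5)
Your proposal is correct and follows essentially the same route as the paper: the paper's (very terse) argument is precisely that pathwise uniqueness of $v$ from Proposition \ref{MiR} under \textbf{(IG5)} transfers to pathwise uniqueness of $\xi$, and then pathwise uniqueness plus existence of a martingale solution (Theorem \ref{vor_bad}(i)) yields a unique strong solution via the Yamada--Watanabe argument of \cite{Ikeda1989}. Your fallback observation---that the constraint $\xi=\nabla^{\perp}\cdot v$ built into \eqref{vort} transfers uniqueness from $v$ to $\xi$ without any energy computation---is exactly the paper's mechanism, and it correctly sidesteps the regularity gap you yourself identified in the pathwise energy estimate (where $\nabla\Xi\in L^2(0,T;L^2)$ is not available in the solution class of Definition \ref{mar_sol}).
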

As a byproduct of Theorem \ref{vor_bad} we gain more regularity for the solution $v$ to equation \eqref{NSabs}.
\begin{corollary}
Under the same assumptions as Theorem \ref{vor_bad}(i) the solution process $v$ of \eqref{NSabs} has 
$\mathbb{P}$-a.s. paths in $C(\left[0,T\right];H^{1,2})$.
Moreover, under the same assumptions as Theorem \ref{vor_bad}(ii), $v$ has also $\mathbb{P}$-a.s. paths in $L^{\infty}(0,T;H^{1,q})$.
\end{corollary}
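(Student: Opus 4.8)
The plan is to read off the $H^{1,q}$-regularity of $v$ from the already established $L^q$-regularity of its vorticity $\xi=\mathrm{curl}\,v$, using the norm equivalence \eqref{equivnorm} together with the invertibility of the curl on divergence free fields. The guiding principle is the observation recorded after \eqref{equivnorm}: for a divergence free $u$, membership $u\in\mathbb{L}^q$ together with $\mathrm{curl}\,u\in L^q$ is equivalent to $u\in H^{1,q}$, with $\|\nabla u\|_{L^q}\sim\|\mathrm{curl}\,u\|_{L^q}$ by Lemma \ref{lemma_curl}. Thus all the work is to transfer the time-regularity of the pair $(v,\xi)$ in the weak spaces $\mathbb{L}^2\times L^2$ (resp. $\mathbb{L}^q\times L^q$) into regularity of $v$ alone in $H^{1,2}$ (resp. $H^{1,q}$).

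For (i), I would proceed as follows. By Proposition \ref{MiR} the solution $v$ of \eqref{NSabs} has $\mathbb{P}$-a.s. paths in $C([0,T];\mathbb{L}^2)$, and by Theorem \ref{vor_bad}(i) its vorticity $\xi=\mathrm{curl}\,v$ has $\mathbb{P}$-a.s. paths in $C([0,T];L^2)$. Fix such a path. For each $t$ the field $v(t)\in\mathbb{L}^2$ has distributional curl $\xi(t)\in L^2$, hence $v(t)\in H^{1,2}$ by the equivalence recalled above; this shows the path takes values in $H^{1,2}$ at \emph{every} time. For continuity, apply \eqref{equivnorm} to the divergence free difference $v(t)-v(s)$, whose curl is $\xi(t)-\xi(s)$, to get
\begin{equation*}
\|v(t)-v(s)\|_{H^{1,2}}^2
=\|v(t)-v(s)\|_{L^2}^2+\|\nabla(v(t)-v(s))\|_{L^2}^2
\le\|v(t)-v(s)\|_{L^2}^2+C\|\xi(t)-\xi(s)\|_{L^2}^2 .
\end{equation*}
The right-hand side tends to $0$ as $s\to t$ because $v\in C([0,T];\mathbb{L}^2)$ and $\xi\in C([0,T];L^2)$, which yields $v\in C([0,T];H^{1,2})$ $\mathbb{P}$-a.s.

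For (ii), under the assumptions of Theorem \ref{vor_bad}(ii) the vorticity satisfies $\xi\in L^\infty(0,T;L^q)$ $\mathbb{P}$-a.s., while Proposition \ref{Lqvpro} (through \eqref{Lqv}) gives $v\in L^\infty(0,T;\mathbb{L}^q)$ $\mathbb{P}$-a.s. Applying the same curl inversion at each time gives $\|\nabla v(t)\|_{L^q}\le C\|\xi(t)\|_{L^q}$, hence $\|v(t)\|_{H^{1,q}}\le C(\|v(t)\|_{\mathbb{L}^q}+\|\xi(t)\|_{L^q})$; taking the essential supremum in $t$ shows $v\in L^\infty(0,T;H^{1,q})$ $\mathbb{P}$-a.s.

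The only genuinely delicate point is the pointwise-in-time curl inversion, namely upgrading the information ``$v(t)\in\mathbb{L}^2$ and $\mathrm{curl}\,v(t)\in L^2$'' to ``$v(t)\in H^{1,2}$'' for \emph{every} $t$ rather than merely for a.e.\ $t$ coming from $v\in L^2(0,T;H^{1,2})$. This rests on the Calder\'on--Zygmund/Biot--Savart estimate underlying Lemma \ref{lemma_curl}, which expresses $\nabla v$ as a bounded singular integral of $\xi$ on $\mathbb{R}^2$; once this is invoked at each time, the continuity in (i) and the boundedness in (ii) follow immediately from the already-proved regularity of $v$ and $\xi$ in the weak norms.
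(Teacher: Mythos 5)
Your proposal is correct and is essentially the argument the paper intends: the corollary is stated as an immediate byproduct of Theorem \ref{vor_bad}, obtained by combining the path regularity of $\xi=\mathrm{curl}\,v$ from Definition \ref{mar_sol} and Theorem \ref{vor_bad} (i)--(ii) with the regularity of $v$ from Propositions \ref{MiR} and \ref{Lqvpro}, and then invoking the norm equivalence \eqref{equivnorm} (Lemma \ref{lemma_curl}) to pass from control of $\|v(t)\|_{L^2}$ (resp. $\|v(t)\|_{\mathbb{L}^q}$) and $\|\xi(t)\|_{L^2}$ (resp. $\|\xi(t)\|_{L^q}$) to control of $\|v(t)\|_{H^{1,2}}$ (resp. $\|v(t)\|_{H^{1,q}}$). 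Your explicit treatment of the pointwise-in-time curl inversion and of the differences $v(t)-v(s)$ simply fills in details the paper leaves implicit, so there is nothing to add.
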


\appendix
\section{Study of the Navier-Stokes equations driven by a more regular noise}
\label{A}
In the present Appendix we are concerned with the Navier-Stokes equations \eqref{NS_0} driven by a more regular covariance operator $G$. The existence result we provide here is needed in the proof of Proposition \ref{prop3}.

On the covariance $G$ we make the following set of assumptions.
\begin{description}
\item[(G1)] the mapping $G:H^{1,2} \rightarrow L_{\text{HS}}(\mathcal{H};H^{1,2})$ is well defined and there exists $a_1>0$ such that 
\begin{equation*}
\|G(v)\|_{L_{\text{HS}}(\mathcal{H};H^{1,2})}\le a_1(1+\|v\|_{H^{1,2}}), \qquad \forall v \in H^{1,2}.
\end{equation*}
\item[(G2)] For all $z \in C^{\infty}_{sol}$ the real valued function $v \mapsto |G(v)^*z|_{\mathcal{H}}$ is continuous on $H^{1,2}$ endowed with the strong $L^2$ topology.
\end{description}
We give the following notion of solution to \eqref{NS_0}.
\begin{definition}
\label{mart_sol}
Let $v_0 \in H^{1,2}$.
A martingale solution to the Navier-Stokes problem \eqref{NS_0} is a triple consisting of a filtered probability space $(\Omega, \mathcal{F}, \{\mathcal{F}_t\}_{t \in \left[0,T\right]}, \mathbb{P})$, an $\{\mathcal{F}_t\}$-adapted cylindrical $\mathcal{H}$-Wiener process $W$ and an $\{\mathcal{F}_t\}$-adapted measurable $H^{1,2}$
-valued process $v$, such that
\begin{enumerate}[label=\roman{*}., ref=(\roman{*})]
\item for every $p \in \left[1, \infty \right)$, 
\begin{equation}
\label{blu}
v \in L^p(\Omega; L^{\infty}(0,T;H^{1,2} )) \cap L^2(\Omega;L^2(0,T;H^{2,2})) , \qquad \mathbb P-a.s.;
\end{equation}
\item for all $z\in C^{\infty}_{\text{sol}}$ and $t \in \left[0,T\right]$ one has $\mathbb{P}$-a.s.
\begin{equation}
\label{weak_NS}
\langle v(t),z \rangle =\langle v_0,z\rangle  
+ \int_0^t \langle \Delta v(s) , z\rangle \, {\rm d}s
+\int_0^t \langle (v(s) \cdot \nabla)v(s),  z\rangle\, {\rm d}s
+\langle \int_0^tG(v(s))\, {\rm d}W(s),z\rangle.
\end{equation}
\end{enumerate}
\end{definition}

In the definition of the martingale solution the incompressibility condition is contained in the requirement that $v$ belongs to $H^{1,2}$.
\begin{theorem}
\label{existence}
Assume that \textbf{(G1)} and \textbf{(G2)} hold. Then for any $v_0 \in H^{1,2}$ there exists a martingale solution to the problem \eqref{NS_0}.
\end{theorem}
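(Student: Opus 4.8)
The plan is to follow the by-now-classical scheme for martingale solutions of stochastic Navier--Stokes equations in unbounded domains (as in \cite{BrzMot}, and as already exploited for the vorticity in the proof of Theorem \ref{vor_bad}), but to carry all the estimates one derivative higher so as to land in $H^{1,2}$. First I would set up a Galerkin approximation: using an orthonormal basis of $\mathbb{L}^2$ built from the dense compact embedding $\mathbb{U}\subset H^{1,2}$ in \eqref{mathbbU}, I project \eqref{NSabs} onto the span $\mathbb{L}^2_m$ of the first $m$ basis elements, obtaining a finite-dimensional It\^o system with locally Lipschitz drift (by Lemma \ref{lemmaB}, item i) and a diffusion satisfying the linear growth bound of \textbf{(G1)}. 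Standard finite-dimensional SDE theory yields a unique local solution $v_m$, which becomes global once the a priori bounds below are in force.

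Second, I would derive the uniform a priori estimates. The $\mathbb{L}^2$-energy estimate follows from It\^o's formula applied to $\|v_m\|_{\mathbb{L}^2}^2$, the nonlinear term dropping out by \eqref{B00}; combined with \textbf{(G1)} and the Burkholder--Davis--Gundy inequality this gives $\mathbb{E}\sup_t\|v_m(t)\|_{\mathbb{L}^2}^p+\mathbb{E}\int_0^T\|\nabla v_m\|_{L^2}^2\,{\rm d}t\le C$ uniformly in $m$. The crucial higher estimate is the enstrophy estimate: writing $\xi_m=\text{curl}\,v_m$ and using the equivalence of norms \eqref{equivnorm} together with Lemma \ref{lemma_curl}, I would apply It\^o to $\|\xi_m\|_{L^2}^2$; the vorticity transport term vanishes thanks to the cancellation $\langle F(v,\xi),\xi\rangle=0$ in \eqref{F2}, so that only the regular noise contributes, yielding $\mathbb{E}\sup_t\|v_m(t)\|_{H^{1,2}}^p+\mathbb{E}\int_0^T\|v_m\|_{H^{2,2}}^2\,{\rm d}t\le C$ uniformly in $m$, which is exactly \eqref{blu}. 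A fractional-in-time estimate of the form $\mathbb{E}\|v_m\|_{C^{\mu}([0,T];H^{-1,2})}^p\le C$ is obtained as in Proposition \ref{v_n_prop}, controlling each term of the weak formulation and using the stochastic-convolution bound of \eqref{wpstoc} type.

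Third, I would establish tightness and pass to the limit. The uniform bounds above, through the compactness/tightness criterion recorded in Lemma \ref{Lem2BrzFer} (and Remark \ref{ASD...} for divergence-free vector fields), give tightness of the laws of $\{v_m\}$ in a space of the form $\mathbb{Z}=L^4_w(0,T;\mathbb{L}^4)\cap C([0,T];\mathbb{U}')\cap L^2(0,T;\mathbb{L}^2_{loc})\cap C([0,T];\mathbb{L}^2_w)$, the key point being the compactness of $\mathbb{U}\subset H^{1,2}$ which compensates for the loss of compactness on $\mathbb{R}^2$. By Jakubowski's generalization of the Skorokhod theorem I obtain a new stochastic basis and a subsequence $\tilde v_k$, with the same laws as $v_{m_k}$, converging $\tilde{\mathbb P}$-a.s.\ in $\mathbb{Z}$ to some $\tilde v$ whose paths inherit \eqref{blu} by lower semicontinuity of the norms. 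Passing to the limit in \eqref{weak_NS} is then carried out exactly as in the proof of Theorem \ref{vor_bad}: the nonlinear term converges using the strong local convergence in $L^2(0,T;\mathbb{L}^2_{loc})$, while the stochastic term is identified by showing that $\tilde M(t)=\tilde v(t)-v_0+\int_0^t A\tilde v\,{\rm d}s+\int_0^t B(\tilde v,\tilde v)\,{\rm d}s$ is a continuous martingale with quadratic variation $\int_0^t G(\tilde v)G(\tilde v)^*\,{\rm d}s$ (here \textbf{(G2)} supplies the continuity of $v\mapsto G(v)^*z$ needed to pass to the limit in the covariance), and then invoking the martingale representation theorem to produce the driving cylindrical Wiener process.

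The main obstacle will be the enstrophy a priori estimate and its interplay with tightness on the unbounded domain: one must check that the nonlinear transport term genuinely cancels at the $H^{1,2}$ level (this is precisely $\langle F(v,\xi),\xi\rangle=0$ in \eqref{F2}, where the two-dimensional structure is essential) and that the Galerkin projection does not destroy this cancellation, while simultaneously guaranteeing that every bound is uniform in $m$ so that the compact embedding $\mathbb{U}\subset H^{1,2}$ can be leveraged through Lemma \ref{Lem2BrzFer}.
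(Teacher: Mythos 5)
Your overall architecture (Galerkin approximation, uniform $H^{1,2}$ a priori bounds via the two-dimensional enstrophy cancellation, tightness, a Skorokhod-type representation, and the martingale representation theorem) is the same as the paper's, which follows \cite{BrzPsz}. But two concrete steps fail as stated. The first is at the very start: under \textbf{(G1)}--\textbf{(G2)} the diffusion coefficient is \emph{not} locally Lipschitz --- indeed it is not even assumed to be continuous as a map from $H^{1,2}$ into $L_{\text{HS}}(\mathcal{H};H^{1,2})$; assumption \textbf{(G2)} only gives continuity of the scalar functions $v\mapsto \|G(v)^*z\|_{\mathcal{H}}$ with respect to the strong $L^2$ topology. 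Hence ``standard finite-dimensional SDE theory'' yields neither existence nor uniqueness for your Galerkin system. This is precisely why the paper runs a \emph{smoothed} Faedo--Galerkin scheme: $G$ is replaced by the regularization $G_n$, obtained by convolving in the Galerkin coordinates with the mollifiers $\rho\big(n(\hat P^{(i)}v-x_i)\big)$ together with a cut-off, which produces a bounded, globally Lipschitz map $G_n:H^{1,2}\to L_{\text{HS}}(\mathcal{H};H^{1,2})$, and $B$ is replaced by $B_n(v,v)=B(\varphi_n(v),v)$, globally Lipschitz from $H^{1,2}$ into $\mathbb{L}^2$. Only then does system \eqref{smoothed} admit a unique solution. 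Any proof under the stated hypotheses needs such a regularization (or some device playing its role); your proposal omits it.

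Second, the enstrophy cancellation does not survive your choice of projection. You project with the $\mathbb{L}^2$-orthogonal projection $P_m$ onto the span of an $\mathbb{L}^2$-orthonormal basis. In the It\^o computation for $\|\mathrm{curl}\,v_m\|^2_{L^2}$ the transport contribution is then, integrating by parts and using $\nabla\cdot v_m=0$,
\[
\langle \mathrm{curl}\,(P_m B(v_m,v_m)), \mathrm{curl}\,v_m\rangle_{L^2}
= \langle B(v_m,v_m), P_m(-\Delta v_m)\rangle_{L^2},
\]
and since on $\mathbb{R}^2$ there is no eigenbasis of the Stokes operator, $P_m$ does not commute with $-\Delta$; so this is not $\langle B(v_m,v_m),-\Delta v_m\rangle_{L^2}=\langle F(v_m,\xi_m),\xi_m\rangle=0$, the cancellation \eqref{F2} you invoke is destroyed, and with it the uniform $H^{1,2}$ bound. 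You correctly flag this as ``the main obstacle,'' but you leave it unresolved, and your stated basis choice is exactly the one for which it is fatal. The paper's resolution (from \cite{BrzPsz}) is to take $\{e_k\}\subset H^{2,2}$ \emph{orthonormal in $H^{1,2}$} and project orthogonally in $H^{1,2}$: then $\langle P^{(n)}B_n(v,v),v\rangle_{H^{1,2}}=\langle B_n(v,v),v\rangle_{H^{1,2}}=0$ by the two-dimensional identity (equivalent, via \eqref{equivnorm}, to the vorticity cancellation), which yields the uniform estimate \eqref{1n}. A further, methodological, difference rather than a gap: the paper performs the tightness and limit step in exponentially weighted spaces $L^2_{\theta}$ with the classical Prokhorov--Skorokhod machinery, not via Jakubowski's theorem in the space $\mathbb{Z}$; your route through Lemma \ref{Lem2BrzFer} is plausible in principle, but that criterion is formulated at the $L^2$ energy level, so you would still need to upgrade it (or argue by lower semicontinuity, as you suggest) to produce a limit enjoying the $H^{1,2}$ regularity \eqref{blu} required by Definition \ref{mart_sol}.
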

Proof of Theorem \ref{existence} is a variation of proof \cite[Theorem 2.1]{BrzPsz}: there the authors consider the Euler equations 
on $\mathbb{R}^2$ perturbed by a multiplicative noise term satisfying the same assumptions we made. 
In order to prove the existence of  a martingale solution they consider a smoothed Faedo-Galerkin scheme of the Navier-Stokes equations. 
In particular a diffusion term $\nu \Delta v$, $\nu >0$, is added in order to use its smoothing effect and obtain the desired estimates. 
In the tightness argument, passing from the finite dimensional approximation to the infinite dimensional non approximated equation, 
they consider $\nu\rightarrow 0$ to recover the Euler equation in the limit.
The main difference in our result is that we maintain the regularizing effect of the Laplacian also in the limit equation. 
In this way we prove more regularity for the solution. We provide only a sketch of the proof.

\begin{proof}
\textbf{Smoothed Faedo-Galerkin approximations.}
As usual we project the first equation of \eqref{NS_0} onto the space of divergence free vectors fields to get rid of the pressure term.
We approximate the nonlinear term $B$ and the covariance operator $G$ in such a way they become Lipschitz in appropriate functional spaces. 
We consider the same approximations $B_n$ and $G_n$ as \cite[Section 5]{BrzPsz}. We recall them here for the sake of clarity. 
\newline
Let $\{e_k\}_k\subset H^{2,2}$ be an orthonormal basis of $H^{1,2}$.
 Let $P^{(n)}$ and $P_n$ be the orthonormal projection of $H^{1,2}$ into the spaces $Span\{e_1,...,e_n\}$ and $Span\{e_n\}=\mathbb{R}e_n$ respectively. Let $\hat P^{(n)}:H^{1,2} \rightarrow \mathbb{R}$ be defined by $\hat P^{(n)}(v)e_n=P_n(v)$, $v \in H^{1,2}$. 
 \newline
Let us start by recalling the approximation of $G$. Let $\rho \in C^{\infty}_0(\mathbb{R})$ be a non-negative function with the support in $\left[0,1 \right]$ and such that $\int_{\mathbb{R}} \rho(x)\, {\rm d}x=1$. Let $\pmb{1}_n=\pmb{1}_{\left[-n,n\right]}$. Recall that, for all $\psi \in \mathcal{H}$ and for all $v \in H^{1,2}$, we have $G(v)\psi \in H^{1,2}$. For such $\psi$ and $v$ we define
\begin{align*}
\left[G_n(v)\psi \right]= n^{-n}P^{(n)}\int_{\mathbb{R}^n}\left[G\left(\sum_{i=1}^nx_ie_i \right)\psi \right]&\pmb{1}_n\left(\left|\sum_{i=1}^n x_ie_i \right|_{H^{1,2}} \right) 
\\
&\times \rho\left(n (\hat{P}^{(1)}v-x_1)\right)\cdot \cdot \cdot \rho\left(n (\hat{P}^{(n)}v-x_n)\right)\, {\rm d}x_1\ldots {\rm d}x_n.
\end{align*}
$G_n(\cdot)$ is bounded and globally Lipschitz from $H^{1,2}$ into  $L_{\text{HS}}(\mathcal{H};H^{1,2})$ (with bounds possibly depending on $n$).
Let now consider the approximation of the nonlinear term $B$. Let $\varphi_n:H^{1,2} \rightarrow H^{1,2}$ be defined by
\begin{equation*}
\varphi_n(u):=
\begin{cases}
u,  &\quad \text{if} \  \|u\|_{H^{1,2}} \le n
\\
n\|u\|^{-1}_{H^{1,2}}u, &\quad \text{otherwise}
\end{cases}
\end{equation*}
Define $B_n(v,v):=B(\varphi_n(v),v)$. $B_n$ is a globally Lipschitz map from $H^{1,2}$ to $\mathbb{L}^2$. 

We consider a sequence of finite dimensional stochastic differential equations, the (smoothed) Faedo-Galerkin systems
\begin{equation}
\label{smoothed}
\begin{cases}
\displaystyle  {\rm d}v^n(t)+\left[P^{(n)}A v^n(t)+P^{(n)}B_n(v^n(t),v^n(t)) \right]{\rm d}t
       =G_n(v^n(t))\,{\rm d} W(t),&\quad t \in [0,T] 
\\ 
\nabla \cdot  v^n(t)=0, &\quad t \in [0,T] 
\\
\displaystyle v^n(0)=v^n_0 &
\end{cases}
\end{equation}
Since all the coefficients are Lipschitz, for every $n \in \mathbb{N}$ this SDE admits a unique solution $v^n$. 
The crucial point is to prove the desired estimates uniformly in $n \in \mathbb{N}$.
Proceeding as in \cite[Lemma 5.1]{BrzPsz} and using the smoothing effect of the Laplacian operator we obtain that, for any $p \in \left[1, \infty\right)$ there exists a finite constant $C_1$, independent of $n$, such that 
\begin{equation}
\label{1n}
\mathbb{E} \sup_{t \in \left[0,T\right]} \|v^n(t)\|^p_{H^{1,2}}+
\mathbb{E}\int_0^T \|\nabla v^n(t)\|^2_{H^{1,2}}\, {\rm d}t \le C_1,
\end{equation}
for any $n \in \mathbb N$.

\textbf{Tightness.} Each process $v^n$ is defined on a filtered probability space $(\Omega, \mathcal{F}, \mathcal{F}_t, \mathbb{P})$ and satisfies \eqref{smoothed} driven by a cylindrical $\mathcal{H}$-Wiener process $W$. Let us denote by $\mathcal{L}(v^n)$ the law of $v^n$ on the space of trajectories $C(\left[0,T\right];H^{1,2})$. We aim at proving that this sequence is tight on an appropriate functional space.
If we consider an unbounded domain, the embedding of the Sobolev space of functions with square integral gradient into the $L^2$ space, unlike in the bounded case, is not compact. Compactness is crucial in a tightness argument. As in \cite{BrzPsz} we introduce spaces with weights.
Let $\theta \in C^{\infty}(\mathbb{R}^2)$ be a strictly positive even function equal to e$^{-|x|}$ for $|x|\ge 1$, and let us denote by $L^2_{\theta}$ the weighted space $\left[L^2(\mathbb{R}^2;\theta(x)\, {\rm d}x)\right]^2$.

Let us set
\begin{equation*}
M^n(t):=\int_0^tG_n(v^n(s))\, {\rm d}W(s), \qquad t \in \left[0,T\right],
\end{equation*}
and let $\mathcal{L}(M^n)$ be the law of $M^n$ on $C(\left[0,T\right];H^{1,2})$. From \cite[Lemma 6.3]{BrzPsz} we get that the family $\{\mathcal{L}(M^n)\}_{n \in \mathbb{N}}$ is tight in $C(\left[0,T\right];L^2_{\theta})$.
A classical result is that $\{M^n(t)\}_n$ are square integrable continuous $L^2_{\theta}$-martingales with quadratic variation
\begin{equation*}
\ll M^n(t)\gg \ =\int_0^t \left[( j_{H^{1,2};L^2_{\theta}}G_n(v^n(s))) ( j_{H^{1,2};L^2_{\theta}}G_n(v^n(s)))^*\right]\, {\rm d} s,
\end{equation*}
where $j_{H^{1,2};L^2_{\theta}}$ denotes the imbedding of $H^{1,2}$ into $L^2_{\theta}$. From \cite[Corollary 6.1]{BrzPsz} we infer that the family $\mathcal{L}(\ll M^n(t))\gg)\}_n$ of the laws of $\{\ll M^n(t)\gg)\}_n$ is tight in $C(\left[0,T\right];L_1(L^2_{\theta},L^2_{\theta}))$, where by $L_1(L^2_{\theta},L^2_{\theta}))$ we denote the space of nuclear operators from $L^2_{\theta}$ into $L^2_{\theta}$. Moreover, from \cite[Lemma 6.4]{BrzPsz} it follows that the family $\{\mathcal{L}(v^n)\}_{n\in \mathbb{N}}$ is tight in $L^2(0,T;L^2_{\theta})$.

\textbf{Convergence.}
Let $\tilde {\mathcal{H}}$ be a Hilbert space such that $\mathcal{H} \hookrightarrow \tilde {\mathcal{H}}$ with a Hilbert-Schmidt imbedding. Then $W$ is a process with continuous trajectories on $\tilde{\mathcal{H}}$.
Set
\begin{equation*}
\mathcal{A}=L^2(0,T;L^2_{\theta}) \times C(\left[0,T\right];L^2_{\theta}) \times C(\left[0,T\right];L^1(L^2_{\theta};L^2_{\theta})) \times C(\left[0,T\right]; \tilde{\mathcal{H}}).
\end{equation*}
From what stated above it follows that the family of laws $\{\mathcal{L}(v^n, M^n, \ll M^n\gg, W)\}_n$ of $\{(v^n, M^n, \ll M^n\gg, W)\}_n$ on $\mathcal{A}$ is tight. Hence, by the Prokhorov theorem it is relatively weakly compact. So, there exists a subsequence $\{n_l\}_{l \in \mathbb{N}}$ such that $\{(v^{n_l}, M^{n_l}, \ll M^{n_l}\gg, W)\}_{n_l}$ converges weakly as $l \rightarrow \infty$.

By the Skorokhod imbedding theorem there exists a probability space $\mathcal{Y}=(\tilde{\Omega}, \tilde{\mathcal{F}}, \{\tilde{\mathcal{F}_t}\}_t, \tilde{\mathbb{P}})$, random elements in $\mathcal{A}$, $(v,M,m,V)$ and $\{v^l,M^l,m^l,V^l\}_{l\in \mathbb{N}}$, defined on $\tilde{\Omega}$, such that 
\begin{description}
\item[(S1)] the laws of $(v^{n_l},M^{n_l},\ll M^{n_l}\gg, W)$ and $(v^{l},M^{l},\ll M^{l}\gg, V^l)$ are the same, 
\item[(S2)] $(v^{l},M^{l},\ll M^{l}\gg, V^l)\rightarrow(v,M,\ll M\gg, V)$, $\tilde{\mathbb{P}}$-a.s. in $\mathcal{A}$. 
\end{description}
From (S1) it follows, in particular, that $v^l$ is the solution to the appropriate Navier-Stokes equations \eqref{smoothed} driven by $V^l$. 
Moreover, for any $p \in (1, \infty)$,
\begin{equation*}
\sup_{l \in \mathbb{N}}\tilde{\mathbb{E}} \left[ \sup_{t \in \left[0,T\right]}\left(\|v^l(t)\|^p_{H^{1,2}}\right)+\int_0^T\|\nabla v^l(t)\|^2_{W^{1,2}}\,{\rm d}t\right]< \infty.
\end{equation*}
\newline
To prove that the limit $(v,M,\ll M\gg, W)$ is the martingale $H^{1,2}$-valued solution to the Navier-Stokes problem \eqref{NS_0} one can proceed as in \cite[Theorem 2.1]{BrzPsz}. The only relevant different part concerns the passage to the limit for the diffusion term. In \cite{BrzPsz}, as $l \rightarrow \infty$ this latter term tends to zero and the Euler equation is recovered. We get instead the Navier-Stokes equations. 

In this way we construct a filtered probability space, an adapted cylindrical $\mathcal{H}$-Wiener process $W$ 
and an adapted measurable $H^{1,2}$-valued process $v$ satisfying (i)-(ii) of Definition \ref{mart_sol}.
\end{proof}

\end{document}